\theoremstyle{plain} \numberwithin{equation}{section}
\newtheorem{thm}{Theorem}[section]
\newtheorem{lemma}[thm]{Lemma}
\newtheorem{cor}[thm]{Corollary}
\newtheorem{prop}[thm]{Proposition}
\theoremstyle{definition}
\newtheorem{defn}[thm]{Definition}
\theoremstyle{remark}
\newtheorem{rem}[thm]{Remark}
\theoremstyle{remark}
\providecommand{\abs}[1]{\left\vert#1\right\vert}
\providecommand{\norm}[1]{\left\Vert#1\right\Vert}
\providecommand{\brac}[1]{\left\{#1\right\}}
\DeclareMathOperator{\diam}{diam}
\DeclareMathOperator{\supp}{supp}
\newcommand{\R}{{\mathbb R}}
\newcommand{\N}{{\mathbb N}}
\newcommand{\Vs}{V_{\ast}}
\newcommand{\Ka}{K_{\alpha}}
\newcommand{\Kao}{K_{\alpha}^0}
\newcommand{\Van}{V_{\alpha,n}}
\newcommand{\Vas}{V_{\alpha,\ast}}
\newcommand{\Fa}{F_{\alpha}}
\newcommand{\Ja}{J_{\alpha}}
\newcommand{\Jae}{\mathcal{J}_{\alpha}}
\newcommand{\Wan}{W_{\alpha,n}}
\newcommand{\Was}{W_{\alpha,\ast}}
\newcommand{\Jan}{J_{\alpha,n}}
\newcommand{\Gai}{G_{\alpha,i}}
\newcommand{\Gaw}{G_{\alpha,w}}
\newcommand{\A}{\mathcal{A}}
\newcommand{\An}{\mathcal{A}^n}
\newcommand{\Am}{\mathcal{A}^m}
\newcommand{\As}{\mathcal{A}^{\ast}}
\newcommand{\ean}{E_{\alpha,n}}
\newcommand{\rhoan}{\rho_{\alpha,n}}
\newcommand{\dai}{d_{\alpha,i}}
\newcommand{\Ean}{\mathcal{E}_{\alpha,n}}
\newcommand{\EKa}{\mathcal{E}_{K_{\alpha}}}
\newcommand{\EKao}{\mathcal{E}_{K_{\alpha}^0}}
\newcommand{\FKa}{\mathcal{F}_{K_{\alpha}}}
\newcommand{\Da}{\mathcal{D}_{K_{\alpha}}}
\newcommand{\Dao}{\mathcal{D}_{K_{\alpha}^0}}
\newcommand{\Dan}{\mathcal{D}_{\alpha,n}}
\newcommand{\mua}{\mu_{\alpha,\beta}}
\newcommand{\muad}{\mu_{\alpha}^d}
\newcommand{\muac}{\mu_{\alpha,\beta}^c}
\newcommand{\LaN}{\Delta^N_{\mu_{\alpha,\beta}}}
\newcommand{\LaD}{\Delta^D_{\mu_{\alpha,\beta}}}
\newcommand{\La}{\Delta_{\mu_{\alpha,\beta}}}
\providecommand{\Kak}[1]{K_{\alpha,{#1}}}
\providecommand{\Ga}[1]{G_{\alpha,{#1}}}
\providecommand{\Jak}[1]{J_{\alpha,{#1}}}
\providecommand{\Jake}[1]{\mathcal{J}_{\alpha,{#1}}}
\providecommand{\Va}[1]{V_{\alpha,{#1}}}
\newcommand{\D}{{\mathcal D}}
\newcommand{\E}{\mathcal{E}}
\newcommand{\F}{\mathcal{F}}
\newcommand{\J}{\mathcal{J}}
\begin{document}
\title{Weyl asymptotics for Hanoi attractors}

\author{Alonso Ruiz, P. and Freiberg, U.R.}
\date{}

\maketitle

\begin{abstract}
The asymptotic behavior of the eigenvalue counting function of Laplacians on Hanoi attractors is determined. To this end, Dirichlet and resistance forms are constructed. Due to the non self-similarity of these sets, the classical construction of the Laplacian for p.c.f. self-similar fractals has to be modified by combining discrete and quantum graph methods.
\end{abstract}

\tableofcontents
\section{Introduction}

It is a well known fact from the theory of Dirichlet forms which can be found e.g. in~\cite{FOT11}, that any local and regular Dirichlet form defines a diffusion process on a set. The development of this theory when the underlying set is fractal started with the construction of Brownian motion on the Sierpi\'nski gasket by Goldstein and Kusuoka in~\cite{Gol87,Kus89}. Since then, many results concerning both Dirichlet forms and diffusion processes on fractals have been established. The self-similar case was first discussed in~\cite{BP88,Kig89,Kig93} on p.c.f. sets and later~\cite{Kaj10} discussed results for the Sierpi\'nski carpet. Non strictly self-similarity can be obtained by introducing randomnes, as the case of homogeneous random p.c.f. fractals and carpets treated in~\cite{Ham00,HK01}, or constructing deterministic examples like self-conformal IFS's, treated in~\cite{AHS08,FL05}, fractafolds~\cite{Str03,ST12}, fractal fields~\cite{HK03}, or fractal quantum graphs~\cite{AKT14}.% and non self-similar carpets in~\cite{}.% Steinhurst is not available yet

\medskip

In this paper, we would like to consider diffusion on a special type of non self-similar sets that we call \textit{Hanoi attractors of parameter} $\alpha$, with $\alpha\in (0,1/3)$. Similar objects have been recently investigated from a topological point of view in~\cite{Geo14}; an stochastic approach of the construction of diffusion in that case has appeared in~\cite{GK14}.

\medskip

Hanoi attractors can be considered as (degenerated) graph directed fractals, introduced in~\cite{MW88}, where the contractions associated to some of the edges are not similitudes. An analysis for such objects was first treated in~\cite{Mur95} for the special case of the \textit{plain Mandala}, and it was generalized in~\cite{HN03} for any graph directed fractal. Here the Laplacian is constructed via Dirichlet forms and its spectral asymptotics are calculated. Our work differs from this in that we construct first a resistance form and afterwards choose a measure that allows us to compute the second term of the spectral asympotics of the Laplacian associated to the induced Dirichlet form. The theory of resistance forms provides a more general framework and was introduced by Kigami in~\cite{Kigami01}. It has been broadly studied in the context of self-similar and p.c.f. sets in~\cite{Kig03,Tep08}.

\medskip

Our interest in Hanoi attractors lies in their geometric relationship with the Sierpi\'nski gasket (see Theorem~\ref{thm: Properties of Ka} below and~\cite{AF12} for further details). The main question we would like to answer here is if these objects are also analytically related in terms of spectral dimension.

\medskip

We recall briefly the construction of Hanoi attractors: let us denote by $\mathscr{H}(\R^2)$ the space of nonempty compact subsets of $\R^2$ and equip it with the Hausdorff distance $h$ given by
\[h(A,B):=\inf\brac{\varepsilon>0~\vert~A\subseteq B_{\varepsilon}\text{ and }B\subseteq A_{\varepsilon}}\quad\text{for }A,B\in\mathscr{H}(\R^2),\]
where $A_{\varepsilon}:=\{x\in\R^2~\vert~d(x,A)<\varepsilon\}$ is the $\varepsilon-$\textit{neighborhood} of $A$.

\medskip

It is known from~\cite[2.10.21]{Federer69} that the distance function $h$ defines a metric on $\mathscr{H}(\R^2)$ and $(\mathscr{H}(\R^2),h)$ is a complete metric space.

\medskip

We consider the points in $\R^2$
\begin{align*}
&p_1:=(0,0), & &p_2:=\left(\frac{1}{2},\frac{\sqrt{3}}{2}\right),& &p_3:=(1,0),\\
&p_4:=\frac{p_2+p_3}{2},& &p_5:=\frac{p_1+p_3}{2},& &p_6:=\frac{p_1+p_2}{2}.
\end{align*}

Note that $p_1,p_2,p_3$ are the vertices of an equilateral triangle of side length $1$. 

\medskip

For any fixed $\alpha\in (0,1/3)$ we define the contractions
\begin{align*}
\Gai\colon&\R^2\longrightarrow\hspace*{0.3cm}\R^2&\nonumber\\
& x\hspace*{0.2cm}\longmapsto A_i(x-p_i)+p_i&i=1,\ldots, 6,
\end{align*}
where $A_1=A_2=A_3=\frac{1-\alpha}{2}\operatorname{I_2}$ and
\[ 
A_4=\frac{\alpha}{4}\left( \begin{array}{lr}
                           1&-\sqrt{3}\\
			   -\sqrt{3}&3
                           \end{array}\right),\quad
A_5=\alpha\left( \begin{array}{lr}
                            1&0\\
			    0&0
                            \end{array}\right),\quad
A_6=\frac{\alpha}{4}\left( \begin{array}{lr}
                            1&\sqrt{3}\\
			    \sqrt{3}&3
                            \end{array}\right).
\]                    

\medskip
      
It follows from~\cite{Hut81} that there exists a unique $\Ka\in\mathscr{H}(\R^2)$ such that 
\[\Ka=\bigcup_{i=1}^6G_{\alpha,i}(\Ka).\]
This set is called the \textit{Hanoi attractor of parameter} $\alpha$ and it is not self-similar because $\Ga{4},\Ga{5}$ and $\Ga{6}$ are not similitudes. The quantity $\alpha$ should be understood as the length of the segments joining the copies $\Ga{1}(\Ka)$, $\Ga{2}(\Ka)$ and $\Ga{3}(\Ka)$. The lack of self-similarity carries some difficulties that we discuss later.

\begin{figure}[h!tpb]
\centering
\includegraphics[scale=0.15]{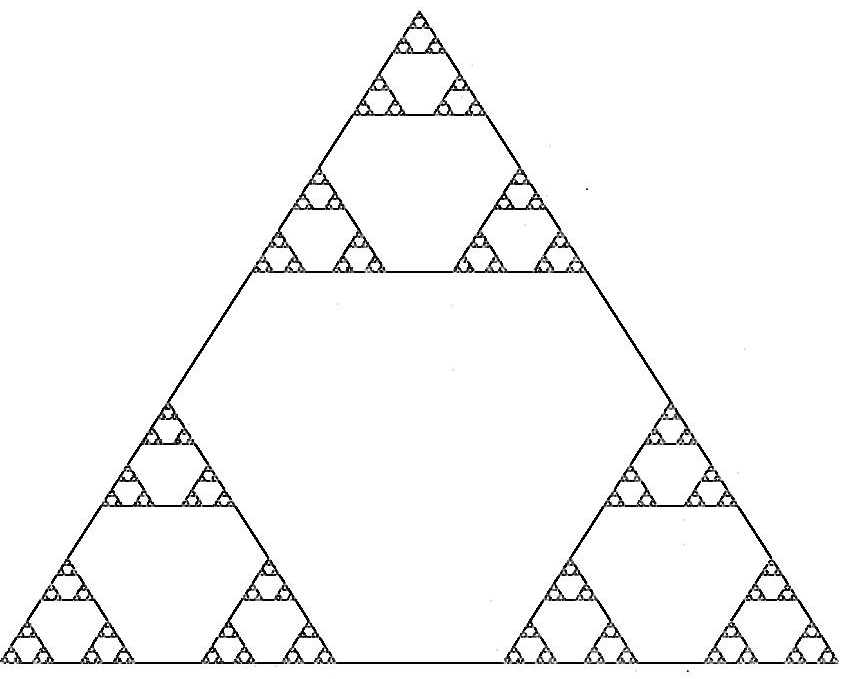}
\caption{The Hanoi attractor of parameter $\alpha=0.25$.}
\end{figure}

For the rest of this section, we fix $\alpha\in (0,1/3)$ and denote by $\A$ the alphabet consisting of the three symbols $1,2$ and $3$. For any word $w=w_1\cdots w_n\in\An$ of length $n\geq 1$, we define $\Gaw\colon\R^2\to\R^2$ as
\[\Gaw:=\Ga{w_1}\circ G_{\alpha,w_2}\circ\cdots \circ G_{\alpha,w_n}\]
and $G_{\alpha,\text{\o}}:=\operatorname{id}_{\R^2}$ for the empty word $\emptyset$.

\medskip

We will approximate the Hanoi attractor $\Ka$ by a sequence of one-dimensional sets defined as follows:

\medskip

Firstly, we consider for each $n\in\N_0$ the set
\begin{equation}\label{Def Wan}
\Wan :=\bigcup_{w\in\An}\Ga{w}(\{p_1,p_2,p_3\}).
\end{equation}

Secondly, we define $\Jak{0}:=\emptyset$ and 
\begin{equation}\label{Def Jan}
\Jan:=\bigcup_{m=0}^{n-1}\bigcup_{w\in\A^m}\Ga{w}\left(\bigcup_{i=1}^3e_i\right)
\end{equation}
for each $n\geq 1$, where $e_i$ denotes the line segment joining $G_{\alpha,j}(p_k)$ and $G_{\alpha,k}(p_j)$ for each triple $\{i,j,k\}=\mathcal{A}$ without its endpoints, as shown in Figure~\ref{Def_ei}. Note that $e_i$ has precisely length $\alpha$ for all $i=1,2,3$.

\medskip

\begin{figure}[h!tpb]
\centering
\begin{tikzpicture}[scale=0.5]
\coordinate (p_1) at (0,0);
\fill[white] (p_1) circle (3pt);
\coordinate (p_2) at (3,5.1961);
\fill[white] (p_2) circle (3pt);
\coordinate (p_3) at (6,0);
\fill[white] (p_3) circle (3pt);
\coordinate [label=left:\footnotesize{$\Ga{1}(p_2)$}](p_12) at (1,1.732);
%\fill (p_12) circle (3pt);
\coordinate [label=left:\footnotesize{$\Ga{2}(p_1)\;$}](p_21) at (2,3.4641);
%\fill (p_21) circle (3pt);
\coordinate [label=below:\footnotesize{$\Ga{1}(p_3)\;$}](p_13) at (2,0);
%\fill (p_13) circle (3pt);
\coordinate [label=below:\footnotesize{$\qquad\Ga{3}(p_1)$}](p_31) at (4,0);
%\fill (p_31) circle (3pt);
\coordinate [label=right:\footnotesize{$\quad\Ga{2}(p_3)$}](p_23) at (4,3.4641);
%\fill (p_23) circle (3pt);
\coordinate [label=right:\footnotesize{$\Ga{3}(p_2)$}](p_32) at (5,1.732);
%\fill (p_32) circle (3pt);
% Draw the edges e_i
\draw[dotted] (p_1) -- (p_2) -- (p_3) -- cycle (p_12) -- (p_13) (p_21) -- (p_23) (p_31) -- (p_32);
\draw[(-)] (p_12) -- (p_21) node[midway, right] {$e_3$};
\draw[(-)] (p_13) -- (p_31) node[midway, above] {$e_2$};
\draw[(-)] (p_23) -- (p_32) node[midway, left] {$e_1$};
\end{tikzpicture}
\caption{\small The set $J_{\alpha,1}$.}
\label{Def_ei}
\end{figure}

\medskip

Therefore, $\Jan$ denotes the set of line segments joining the copies of $\Ka$ at iteration level $n$.

\medskip

Finally we define for each $n\in\N_0$ the set
\begin{equation}\label{Def Van}
\Van:=\Wan\cup\Jan
\end{equation}
Since the sequence $(\Van)_{n\in\N_0}$ is monotonically in\-crea\-sing as suggested in Figure~\ref{Vans}, we can consider the set
\begin{equation}\label{eq def Vas}
\Vas:=\bigcup\limits_{n\in\N_0}\Van,
\end{equation}
which is dense in $\Ka$ with respect to the Euclidean metric (see~\cite[Lemma 2.1.2]{ARThesis13} for a proof). We may also say that $\Van$ is the union of a ``discrete part'' $\Wan$ and its ``continuous part''$\Jan$. Moreover, since $\Va{0}=\{p_1,p_2,p_3\}$ is independent of $\alpha$, we will denote this set just by $V_0$.

\bigskip

\begin{figure}[h!tpb]
\begin{center}
\begin{tabular}{ccccccc}
\begin{tikzpicture}[scale=0.375]
\coordinate (p_1) at (0,0);
\fill (p_1) circle (3.5pt);
\coordinate (p_2) at (3,5.1961);
\fill (p_2) circle (3.5pt);
\coordinate (p_3) at (6,0);
\fill (p_3) circle (3.5pt);
\end{tikzpicture}
&
&
\begin{tikzpicture}[scale=0.375]
\coordinate (p_1) at (0,0);
\fill (p_1) circle (3.5pt);
\coordinate (p_2) at (3,5.1961);
\fill (p_2) circle (3.5pt);
\coordinate (p_3) at (6,0);
\fill (p_3) circle (3.5pt);
\coordinate (p_12) at (1,1.732);
\fill (p_12) circle (3.5pt);
\coordinate (p_21) at (2,3.4641);
\fill (p_21) circle (3.5pt);
\coordinate (p_13) at (2,0);
\fill (p_13) circle (3.5pt);
\coordinate (p_31) at (4,0);
\fill (p_31) circle (3.5pt);
\coordinate (p_23) at (4,3.4641);
\fill (p_23) circle (3.5pt);
\coordinate (p_32) at (5,1.732);
\fill (p_32) circle (3.5pt);

\draw (p_12) -- (p_21)  (p_13) -- (p_31)   (p_23) -- (p_32);

\end{tikzpicture}
&
&
\begin{tikzpicture}[scale= 0.375]
%labeling Hanoi
\coordinate (p_1) at (0,0);
\fill (p_1) circle (3.5pt);
\coordinate (p_2) at (3,5.1961);
\fill (p_2) circle (3.5pt);
\coordinate (p_3) at (6,0);
\fill (p_3) circle (3.5pt);
\coordinate (p_12) at (1,1.732);
\fill (p_12) circle (3.5pt);
\coordinate (p_21) at (2,3.4641);
\fill (p_21) circle (3.5pt);
\coordinate (p_13) at (2,0);
\fill (p_13) circle (3.5pt);
\coordinate (p_31) at (4,0);
\fill (p_31) circle (3.5pt);
\coordinate (p_23) at (4,3.4641);
\fill (p_23) circle (3.5pt);
\coordinate (p_32) at (5,1.732);
\fill (p_32) circle (3.5pt);
%1-cell
\coordinate (p_112) at (1/3,1.732/3);
\fill (p_112) circle (3.5pt);
\coordinate (p_121) at (2/3,3.4641/3);
\fill (p_121) circle (3.5pt);
\coordinate (p_113) at (2/3,0);
\fill (p_113) circle (3.5pt);
\coordinate (p_131) at (4/3,0);
\fill (p_131) circle (3.5pt);
\coordinate (p_123) at (4/3,3.4641/3);
\fill (p_123) circle (3.5pt);
\coordinate (p_132) at (5/3,1.732/3);
\fill (p_132) circle (3.5pt);
%2-cell
\coordinate (p_212) at (1/3+2,1.732/3+3.4641);
\fill (p_212) circle (3.5pt);
\coordinate (p_221) at (2/3+2,3.4641/3+3.4641);
\fill (p_221) circle (3.5pt);
\coordinate (p_213) at (2/3+2,3.4641);
\fill (p_213) circle (3.5pt);
\coordinate (p_231) at (4/3+2,3.4641);
\fill (p_231) circle (3.5pt);
\coordinate (p_223) at (4/3+2,3.4641/3+3.4641);
\fill (p_223) circle (3.5pt);
\coordinate (p_232) at (5/3+2,1.732/3+3.4641);
\fill (p_232) circle (3.5pt);
%3-cell
\coordinate (p_312) at (1/3+4,1.732/3);
\fill (p_312) circle (3.5pt);
\coordinate (p_321) at (2/3+4,3.4641/3);
\fill (p_321) circle (3.5pt);
\coordinate (p_313) at (2/3+4,0);
\fill (p_313) circle (3.5pt);
\coordinate (p_331) at (4/3+4,0);
\fill (p_331) circle (3.5pt);
\coordinate (p_323) at (4/3+4,3.4641/3);
\fill (p_323) circle (3.5pt);
\coordinate (p_332) at (5/3+4,1.732/3);
\fill (p_332) circle (3.5pt);

% Draw the edges e_i
\draw (p_112) -- (p_121) (p_113) -- (p_131) (p_123) -- (p_132)
(p_212) -- (p_221) (p_213) -- (p_231) (p_223) -- (p_232) 
(p_312) -- (p_321) (p_331) -- (p_313) (p_323) -- (p_332)
(p_12) -- (p_21) (p_13) -- (p_31) (p_23) -- (p_32)  (p_223) -- (p_232);
\end{tikzpicture}
&
\begin{tikzpicture}[scale=0.375]
\coordinate (p_1) at (0,0);
\fill[white] (p_1) circle (3pt);
%\coordinate (p_2) at (0,5.2);
%\fill[white] (p_2) circle (3pt);
\coordinate (p_3) at (0,2.7);
%\coordinate[label=above: {\tiny{$n\to\infty$}}] (n) at (0.75,2.7);
\coordinate (p_4) at (1.5,2.7);
\draw[->] (p_3) -- (p_4);
\end{tikzpicture}
&
\includegraphics[width=2.4cm,height=2.1cm]{HanoiAtt.jpg}
\end{tabular}
\end{center}
\caption{\small $V_{0}$, $\Va{1}$, $\Va{2}$ and $\Ka$.}
\label{Vans}
\end{figure}

\medskip

The geometric relationship between Hanoi attractors and the Sierpi\'nski gasket is stated in the following Theorem.

\begin{thm}\label{thm: Properties of Ka}
Let $K$ denote the Sierpi\'nski gasket and let $\Ka$ be the Hanoi attractor of parameter $\alpha$, $\alpha\in (0,1/3)$. Then we have:
\begin{itemize}
\item[(i)] $h(K_{\alpha},K)\longrightarrow 0$ as $\alpha\downarrow 0$,
\item[(ii)] $\dim_H \Ka=\frac{\ln 3}{\ln 2-\ln (1-\alpha)}=:d$ and $0<\mathcal{H}^d(\Ka)<\infty$. In particular,
\[\dim_H \Ka\xrightarrow{\alpha\downarrow 0} \dim_H K.\]
\end{itemize}
\end{thm}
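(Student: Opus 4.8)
The plan is to read (i) as a continuity/perturbation statement for iterated function systems, and (ii) as a two-sided $\mathcal H^{d}$-estimate whose lower bound comes from an embedded self-similar Cantor set.

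For (i) I would first identify the limiting system as $\alpha\downarrow0$. Since $A_1=A_2=A_3=\frac{1-\alpha}{2}\operatorname{I_2}\to\frac12\operatorname{I_2}$, the maps $G_{\alpha,i}$ ($i=1,2,3$) converge uniformly on the unit triangle to the gasket similarities $S_i(x)=\frac12(x-p_i)+p_i$; and since each $A_i$ ($i=4,5,6$) is rank one with nonzero eigenvalue $\alpha$, so that $\|A_i\|=\alpha$, the maps $G_{\alpha,i}$ ($i=4,5,6$) converge uniformly to the constant maps $x\mapsto p_i$. The attractor of the limiting six-map operator $F_0$ is exactly $K$: as $p_4,p_5,p_6\in K$ and $K=\bigcup_{i=1}^{3}S_i(K)$, the set $K$ is $F_0$-invariant, and uniqueness of the attractor forces equality. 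Writing $F_\alpha$ for the Hutchinson operator of $K_\alpha$, a contraction on $(\mathscr H(\R^2),h)$ with ratio $r_\alpha=\max_i\|A_i\|=\frac{1-\alpha}2<\frac12$, I would then telescope:
\[
h(K_\alpha,K)=h(F_\alpha K_\alpha,F_0K)\le r_\alpha\,h(K_\alpha,K)+h(F_\alpha K,F_0K),
\]
so $h(K_\alpha,K)\le(1-r_\alpha)^{-1}h(F_\alpha K,F_0K)$. Bounding $h(F_\alpha K,F_0K)\le\max_i\sup_{x\in K}\|G_{\alpha,i}(x)-G_{0,i}(x)\|=O(\alpha)$, where $G_{0,i}$ is the corresponding limiting map, finishes (i).

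For the upper bound in (ii) I would use the decomposition furnished by the construction,
\[
K_\alpha=\bigcup_{w\in\mathcal A^{n}}G_{\alpha,w}(K_\alpha)\;\cup\;J_{\alpha,n},
\]
where, writing $\rho:=\frac{1-\alpha}2$, the $3^{n}$ cells $G_{\alpha,w}(K_\alpha)$ are similar copies of $K_\alpha$ of diameter $\rho^{n}\diam K_\alpha$, and $J_{\alpha,n}$ is the union over $0\le m<n$ of $3^{m+1}$ segments of length $\rho^{m}\alpha$ (the rank-one maps $G_{\alpha,4},G_{\alpha,5},G_{\alpha,6}$ carry $K_\alpha$ onto the connecting segments $e_1,e_2,e_3$, which are centred at $p_4,p_5,p_6$ and have length $\alpha$). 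Covering each cell by one set of diameter $\rho^{n}\diam K_\alpha$ costs $3^{n}(\rho^{n}\diam K_\alpha)^{d}=(\diam K_\alpha)^{d}$ by the defining relation $3\rho^{d}=1$; covering a length-$\rho^{m}\alpha$ segment at scale $\rho^{n}$ costs $\asymp\rho^{m}\alpha\,\rho^{n(d-1)}$, so the total segment contribution is $\asymp\alpha\,\rho^{n(d-1)}\sum_{m=0}^{n-1}(3\rho)^{m}\asymp\alpha\,\rho^{n(d-1)}(3\rho)^{n}=\alpha\,(3\rho^{d})^{n}=\alpha$, again uniformly bounded. Letting $n\to\infty$ (so the scale $\rho^{n}\to0$) yields $\mathcal H^{d}(K_\alpha)<\infty$ and hence $\dim_H K_\alpha\le d$.

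For the lower bound I would pass to the sub-attractor $K_\alpha^{0}$ generated by the three similarities $G_{\alpha,1},G_{\alpha,2},G_{\alpha,3}$ alone. Because $\rho<\frac12$ for $\alpha\in(0,1/3)$, their images are disjoint inside the open triangle, so the strong open set condition holds and the Moran--Hutchinson theorem gives $\dim_H K_\alpha^{0}=d$ with $0<\mathcal H^{d}(K_\alpha^{0})<\infty$. Since $G_{\alpha,w}(K_\alpha^{0})\subseteq K_\alpha$ for every $w$ and $K_\alpha$ is closed, $K_\alpha^{0}\subseteq K_\alpha$, and monotonicity of Hausdorff measure gives $\dim_H K_\alpha\ge d$ and $\mathcal H^{d}(K_\alpha)\ge\mathcal H^{d}(K_\alpha^{0})>0$; together with the upper bound this is (ii), and the concluding limit is immediate from $d=\frac{\ln3}{\ln2-\ln(1-\alpha)}\xrightarrow{\alpha\downarrow0}\frac{\ln3}{\ln2}=\dim_H K$. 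I expect the main obstacle to be the upper-bound covering estimate—specifically the bookkeeping showing that the connecting segments, whose total length diverges, nonetheless contribute nothing to $\mathcal H^{d}$; everything rests on the identity $3\rho^{d}=1$ together with $d>1$.
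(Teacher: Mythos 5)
Your argument is correct on all points: the continuity of the attractor map via the telescoped contraction estimate for (i), and for (ii) the covering bound driven by the identity $3\left(\tfrac{1-\alpha}{2}\right)^{d}=1$ (which kills the contribution of the connecting segments) together with the embedded Moran set $F_{\alpha}$ generated by $G_{\alpha,1},G_{\alpha,2},G_{\alpha,3}$ for the lower bound. The paper itself offers no internal proof of this theorem --- it simply cites \cite[Theorem 3.1, Corollary 4.1]{AF12} --- so there is nothing to compare against line by line; your write-up is a complete, self-contained substitute, and the strategy (Hutchinson-operator perturbation plus covering/sub-self-similar-set estimates) is the standard one that the cited reference also follows.
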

\begin{proof}
See~\cite[Theorem 3.1, Corollary 4.1]{AF12}.
\end{proof}

\medskip

\begin{rem}
Note that part \textit{(ii)} of this Theorem justifies the condition that $\alpha<1/3$: If $\alpha\geq 1/3$, then $\dim_H\Ka=1$, reducing the problem to $1-$dimensional analysis.
\end{rem}

These results awoke the question about what other convergence types could hold, in particular convergence of the spectral dimension. Since $\Ka$ is not self-similar, we could neither define a Dirichlet form for $\Ka$ nor calculate its spectral dimension as in the self-similar case treated in~\cite{KL93}. 
%here some comment from the referee 2 should be added about abc gaskets or/and spectral radius of matrices etc
However, $\Ka$ still has the good property of being finitely ramified and so we focus on constructing a resistance form $(\EKa,\FKa)$ on $\Ka$. After choosing a suitable Radon measure, this resistance form induces a Dirichlet form on $\Ka$ and therefore a Laplacian, whose spectral behavior we analyse.

\medskip

This paper is organized as follows: Section 2 recalls the construction of the local and regular Dirichlet form $(\EKa,\Da)$ on $\Ka$ introduced in~\cite{AF13a} restating some of the results in terms of resistance forms. In particular, we prove that
\begin{thm}
There exists a regular resistance form $(\EKa,\FKa)$ on $\Ka$.
\end{thm}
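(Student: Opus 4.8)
The plan is to realise $(\EKa,\FKa)$ as the monotone limit of a compatible sequence of resistance forms living on the approximating sets $\Van$, and then to check Kigami's axioms (RF1)--(RF5) together with regularity. The feature to track throughout is that each $\Van=\Wan\cup\Jan$ carries a \emph{mixed} energy: a genuinely one-dimensional Dirichlet integral on the segments comprising $\Jan$ (the quantum-graph part) glued to a discrete network energy on the skeleton $\Wan$ (the discrete part). Concretely, for fixed $n$ the form $\Ean$ recalled in Section~2 assigns to every segment $e\subseteq\Jan$ the Dirichlet integral $\int_e\abs{u'}^2\,d\ell$ (up to the prescribed weight) and to $\Wan$ the discrete quadratic form with the chosen conductances. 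Since a finite network is a resistance form, and a single interval equipped with its Dirichlet integral is a resistance form whose effective resistance between its endpoints equals its length, the glued object $(\Ean,\F_n)$ — with $\F_n$ consisting of the functions lying in $H^1$ on each segment and arbitrary on $\Wan$ — is again a resistance form on the metric graph $\Van$.

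The crucial structural input, supplied by Section~2, is \emph{compatibility}: with the renormalisation constant $\rho=\rho(\alpha)$ fixed there, the trace of $\Ea{n+1}$ onto $\Van$ coincides with $\Ean$, equivalently $\Ean(u)=\min\{\Ea{n+1}(v):v|_{\Van}=u\}$. Granting this, $(\Ean(u|_{\Van}))_n$ is nondecreasing for every $u$, and I would set
\[
\FKa:=\Big\{u\in C(\Ka):\sup_{n}\Ean(u|_{\Van})<\infty\Big\},\qquad
\EKa(u):=\lim_{n\to\infty}\Ean(u|_{\Van}).
\]
By Kigami's theorem on limits of compatible resistance forms, $(\EKa,\FKa)$ is a resistance form on $\Vas$ which extends to its $R$-completion; since $\Vas$ is dense in the compact set $\Ka$ and resistances are uniformly controlled, this completion is $\Ka$ itself. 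Axioms (RF1)--(RF3) are inherited from the finite levels, the Markov property (RF5) passes to the limit because it holds both for discrete networks and for the one-dimensional Dirichlet integral (the unit contraction never increases $\int\abs{u'}^2$), and (RF4) follows by estimating $R(p,q)$ along a path in $\Ka$, the segment contributions being their geometrically decaying lengths and the skeleton contributions a convergent series in $\rho$.

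For regularity I would show that $\FKa\cap C(\Ka)$ is dense in $(C(\Ka),\norm{\cdot}_\infty)$. The first point is that the resistance metric $R$ induces the Euclidean (equivalently Hausdorff) topology on $\Ka$, so that $C(\Ka)$ is unambiguous and $R$-continuity agrees with sup-norm continuity; in particular every element of $\FKa$ is continuous via $\abs{u(p)-u(q)}^2\le R(p,q)\EKa(u)$. Then the functions that are affine on every segment of some $\Jan$ and discrete-harmonic on $\Wan$ lie in $\FKa$, contain the constants, and separate points of $\Ka$. Approximating an arbitrary $u\in C(\Ka)$ by its level-$n$ piecewise-harmonic interpolant, which converges uniformly as $n\to\infty$ because the copies $\Ga{w}(\Ka)$ shrink to points and a maximum principle bounds the oscillation of the interpolant on each of them, yields the required density.

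The step I expect to be the main obstacle is this regularity argument, and within it the uniform sup-norm approximation: because the degenerate maps $\Ga{4},\Ga{5},\Ga{6}$ are not similitudes, the segments and the fractal copies contract at different rates, so the oscillation of the level-$n$ interpolant must be controlled simultaneously on the one-dimensional segments of $\Jan$ and on the accumulating skeleton points of $\Wan$. Verifying the compatibility/renormalisation of the second step in the presence of these degenerate contractions is the other delicate point, but it is precisely what the construction recalled in Section~2 provides.
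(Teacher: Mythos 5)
Your overall strategy -- build compatible approximating resistance forms, pass to the limit via Kigami's theory, then identify the $R$-topology with the Euclidean one -- is the same as the paper's, but two points differ in substance. First, you apply ``Kigami's theorem on limits of compatible resistance forms'' directly to the mixed quantum-graph/discrete forms $(\Ean,\D_n)$ on the sets $\Van$, which contain whole line segments. The compatible-sequence machinery you are invoking (e.g.\ \cite[Theorem 3.13]{Kig12}) is formulated for Laplacians on an increasing sequence of \emph{finite} sets, so it does not apply off the shelf to your metric-graph objects. The paper repairs exactly this: it inserts finite sets $D_n^e$ exhausting a countable dense subset of each segment, sets $\widetilde V_n:=W_n\cup\bigcup_e D_n^e$, and takes the \emph{trace} $\widetilde\E_n[u]=\inf\{\E_n[v]:v|_{\widetilde V_n}\equiv u\}$, which is a genuine finite network form; compatibility of these traces is then checked using the identity $\tfrac53 r_n^d+r_n^c=1$ (equivalently $\tfrac53\rho_n^d+\rho_n^c=\rho_{n-1}^d$) coming from the $\Delta$--$Y$ computation. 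Your argument is morally sound but needs this (or an equivalent) reduction to be rigorous. Second, for regularity you propose a hands-on density argument with piecewise-harmonic interpolants and a maximum principle -- which you correctly identify as the delicate step, since the segments and the triangular cells contract at different rates. The paper sidesteps this entirely: once the resistance metric is shown to induce the Euclidean topology (its Proposition on $R$ versus the Euclidean metric), $\Ka$ is $R$-compact and regularity is automatic by \cite[Corollary 6.4]{Kig12}. Your route would presumably work but costs an oscillation estimate that the paper never has to prove; I would recommend adopting the $R$-compactness shortcut and the finite trace sets, after which your outline matches the paper's proof.
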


Section 3 deals with the properties of the Dirichlet form $(\EKa,\Da)$ induced by a class of Radon measures in $\Ka$ that depend on a parameter $\beta$. We also characterize the spectrum of the Laplacian associated with $(\EKa,\Da)$ in the corresponding $L^2-$space. Section 4 analyses the asymptotic behavior of the eigenvalue counting function of this Laplacian by giving the following estimate

\begin{thm}
Let $N_{D/N}(x)$ denote the eigenvalue counting function of the Laplacian on $\Ka$. Then
\begin{equation}\label{asymp intro}
N_{N/D}(x)\asymp x^{\frac{\log 3}{\log 5}},\qquad x\to\infty.
\end{equation}
\end{thm}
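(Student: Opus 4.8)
The plan is to establish the two-sided estimate $N_{N/D}(x)\asymp x^{\log 3/\log 5}$ by the classical Dirichlet--Neumann bracketing technique adapted to the non self-similar structure of $\Ka$. First I would fix the measure $\mua$ from Section~3 and recall that the Laplacian $\La$ is defined through the resistance form $(\EKa,\FKa)$, so that its eigenvalue counting functions $N_D$ and $N_N$ (with Dirichlet, resp. Neumann, boundary conditions on $V_0$) satisfy the standard interlacing $N_D(x)\le N_N(x)\le N_D(x)+c$. Hence it suffices to control one of them up to the constant, and the heart of the matter is to produce a \emph{renormalization recursion} for the counting function across iteration levels $n$. The key heuristic is that the single renormalization factor governing the spectral asymptotics is $5$: this arises because each elementary cell contributes a resistance scaling of $3$ (the gasket-type resistance renormalization on the triangle $V_0$) combined with a measure/time scaling, whose product yields the effective scaling $5$ that reproduces the Sierpi\'nski-gasket spectral exponent $\log 3/\log 5$.

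The main steps I would carry out are as follows. First, decompose $\Ka$ at level $n$ into the $3^n$ contracted copies $\Gaw(\Ka)$, $w\in\An$, together with the joining segments in $\Jan$; the finite ramification (only the three points of $V_0$ are shared between a copy and the rest) is what makes the bracketing work. Second, by imposing Neumann (respectively Dirichlet) conditions on the cell boundaries $\Wan$ one obtains the decoupled operator, which is a direct sum of $3^n$ copies of the cell Laplacian plus the contributions of the one-dimensional quantum-graph pieces $\Jan$. This gives the basic inequalities
\begin{equation}\label{eq:bracket}
\sum_{w\in\An} N^N_{w}(x) \;\le\; N_N(x) \;\le\; N_D(x)+\#\Wan \;\le\; \sum_{w\in\An} N^D_{w}(x)+\#\Wan,
\end{equation}
where $N^{D/N}_w$ is the counting function of the Laplacian restricted to the cell $\Gaw(\Ka)$. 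Third, using the self-affine scaling of the contractions $\Gaw$ together with the way the resistance form and the measure rescale under these maps, I would show that $N^{D/N}_w(x) = N^{D/N}(\lambda^{-1}x)$ for a fixed scaling factor $\lambda$ tied to the renormalization constant $5$, so that \eqref{eq:bracket} becomes a genuine renewal-type inequality for $N(x)$ as a function of a single real variable.

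From such a scaling relation the asymptotics follow by a Renewal Theorem argument (in the style of~\cite{KL93}): iterating the inequality and taking $\log$ shows that $\log N(x)/\log x \to \log 3/\log 5$, and the sharper $\asymp$ (rather than merely logarithmic asymptotics) comes from the boundedness of the correction term $\#\Wan$ relative to the leading $3^n$ growth, which keeps the error subordinate at every scale. I would handle the gap between $N_N$ and $N_D$ by noting that their difference is bounded by the rank of the boundary perturbation, namely the number of boundary vertices, which is negligible against the polynomial growth; this is precisely what allows the single symbol $N_{N/D}$ in~\eqref{asymp intro}. The hardest part will be making the cell-scaling identity $N^{D/N}_w(x)=N^{D/N}(\lambda^{-1}x)$ rigorous, because the copies $\Gaw(\Ka)$ are \emph{not} exact rescalings of $\Ka$ — the maps $\Ga{4},\Ga{5},\Ga{6}$ are affine but not similitudes, so the continuous (segment) part and the discrete (gasket) part scale by \emph{different} factors. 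I expect to resolve this by separating the spectral contributions of the quantum-graph edges $\Jan$, whose one-dimensional spectrum grows linearly and therefore with exponent $1 > \log 3/\log 5$ only if measured in the wrong scale; careful accounting should show that under the measure $\mua$ the edge contributions are subleading, leaving the gasket-type cells to dictate the exponent $\log 3/\log 5$.
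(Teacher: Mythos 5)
Your decomposition of $\Ka$ into the $3^m$ cells $\Gaw(\Ka)$ plus the quantum-graph edges $\Jak{m}$, the identification of $5$ as the effective spectral scaling factor, and the expectation that the edge contribution is subleading all match the paper's strategy. But the step you yourself flag as the hardest one is where the argument genuinely breaks: the cell-scaling identity $N^{D/N}_w(x)=N^{D/N}(\lambda^{-1}x)$ that would feed a renewal-theorem iteration is \emph{false} for Hanoi attractors, and not merely for a reason that ``careful accounting'' of the edges repairs. The discrete and continuous parts of the energy carry different renormalization factors $\rho_n^d$ and $\rho_n^c$, the quotient $(3/5)^n/\rho_n^d$ only \emph{converges} to a constant $L$ rather than being identically $1$, and the restriction of $\mua$ to a cell $\Kak{w}$ is not an exact rescaled copy of $\mua$ (indeed $3^m\mua(\Kak{w})\neq 1$). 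So there is no exact, nor even level-independent approximate, scaling relation to iterate, and the renewal argument of the self-similar case (as in the reference you cite) does not get off the ground. The paper sidesteps renewal theory entirely: following Kajino's minimax method it proves only \emph{one-sided} eigenvalue estimates --- $\kappa_{3^m+1}\geq C_U 5^m$ for the decoupled Neumann form on $\Kak{m}$ (whence $N_{\Kak{m}}(x)\leq 3^m\lesssim x^{\log 3/\log 5}$) and $\kappa_1(\Kak{w}^0)\leq C_D 5^m$ for the first Dirichlet eigenvalue of each cell, the latter via an explicit test function transported from the Sierpi\'nski gasket through an identification map. One-sided bounds need only the inequality version of the scaling formula (Corollary~\ref{cor scaling prop EKa}), which is exactly what survives the loss of self-similarity. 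You would need to either adopt this route or prove a quantitative approximate-renewal lemma; the proposal as written contains neither.

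Two further points. First, your bracketing chain~\eqref{eq:bracket} has the inequalities reversed: Neumann decoupling \emph{enlarges} the form domain and hence gives an \emph{upper} bound $N_N(x)\leq\sum_w N^N_w(x)+N_{\Jak{m}}(x)$, while Dirichlet decoupling gives the \emph{lower} bound $\sum_w N^D_w(x)\leq N_D(x)$; as stated, both outer inequalities in~\eqref{eq:bracket} point the wrong way. Second, the claim that the edge contribution is subleading is not automatic: the edges contribute a genuine term of order $x^{1/2}$ (which is subleading only because $1/2<\log 3/\log 5$), and its finiteness hinges on the choice of the measure parameter $\beta$ in~\eqref{eq: property of beta}, which makes $\sum_k(3(1-\alpha)/2)^k\beta^{k/2}$ converge. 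Your proposal never engages with the construction of $\mua$, yet without that constraint the infinitely many small wires could dominate the spectrum and destroy the claimed exponent.
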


The eigenvalue counting function gives the number of Neumann (resp. Dirichlet) eigenvalues of the considered Laplacian, counted with multiplicity, lying below $x$. A more precise definition is given at the beginning of Section 4.

\medskip

From this theorem it follows that the \textit{spectral dimension} of $\Ka$ equals $\frac{\log 9}{\log 5}$ for all $\alpha\in (0,1/3)$ and it therefore coincides with the spectral dimension of the Sierpi\'nski gasket. In particular, it turns out that (contrary to Hausdorff dimension) the spectral dimension of $\Ka$ is independent of the parameter $\alpha$. This can be interpreted as the fact that one can ``see'' this parameter but not ``hear'' it.
 However, the parameter $\alpha$ will be reflected by the constants appearing in the asymtotics~\eqref{asymp intro}, where we also provide a second term whose constants do not only depend of $\alpha$ but also of the measure parameter $\beta$, see Theorem~\ref{prop estimation evcf}. 
 
\medskip

The last section analyses some interesting physical consequences of this result in view of the Einstein relation to be considered for further research.

\section{Resistance and Dirichlet form on the Hanoi attractor}\label{sec: DF on Ka}
This section is devoted to the construction of a resistance  and a Dirichlet form . The main novelty in this procedure consists in the definition of the approximating forms $(\ean,\Dan)$, which combines techniques of discrete and quantum graphs, as well as the computation of the corresponding renormalization factors.

\medskip

Since all results presented in the paper hold for any $\alpha\in (0,1/3)$ we drop off the parameter $\alpha$ in the definitions for ease of reading, and only recall this dependence on $\alpha$ explicitly when needed. Thus we write $G_w$, $W_n$, $J_n$, $V_n$, $\E_n$ etc. instead of $\Ga{w}$, $\Wan$, $\Jan$, $\Van$ and $\Ean$.

\subsection{Approximating forms}
The definition of the bilinear form $(\E_n,\D_n)$ on each of the approximating sets $V_n$ defined in~\eqref{Def Van} will reflect the fact that the set $V_n$ can be decomposed into a ``discrete'' and a ``continuous'' part.

\medskip

We start with some useful notation. Concerning to the ``discrete part'', we say that any two vertices $x,y\in W_n$, are \textit{$n-$neighbors}, and write
\[x\stackrel{n}{\sim}y,\]
if and only if there exists a word $w\in\An$ of length $n\in\N_0$ such that $x,y\in G_w(V_0)$, i.e., both points are vertices of the same $n-$th level triangle $G_{w}(\{p_0,p_1,p_2\})$.  Figure~\ref{Nbhs2} illustrates this relation for the level $n=2$.

\medskip

\begin{figure}[h!tpb]
\centering
\begin{tikzpicture}[scale= 0.45]
%labeling Hanoi
\coordinate (p_1) at (0,0);
\fill (p_1) circle (2.5pt);
\coordinate (p_2) at (3,5.1961);
\fill (p_2) circle (2.5pt);
\coordinate (p_3) at (6,0);
\fill (p_3) circle (2.5pt);
\coordinate[label=left:\footnotesize{$z$}] (p_12) at (1,1.732);
\fill (p_12) circle (2.5pt);
\coordinate (p_21) at (2,3.4641);
\fill (p_21) circle (2.5pt);
\coordinate (p_13) at (2,0);
\fill (p_13) circle (2.5pt);
\coordinate (p_31) at (4,0);
\fill (p_31) circle (2.5pt);
\coordinate[label=right:\footnotesize{$x$}] (p_23) at (4,3.4641);
\fill (p_23) circle (2.5pt);
\coordinate (p_32) at (5,1.732);
\fill (p_32) circle (2.5pt);
%1-cell
\coordinate (p_112) at (1/3,1.732/3);
\fill (p_112) circle (2.5pt);
\coordinate[label=left:\footnotesize{$t$}] (p_121) at (2/3,3.4641/3);
\fill (p_121) circle (2.5pt);
\coordinate (p_113) at (2/3,0);
\fill (p_113) circle (2.5pt);
\coordinate (p_131) at (4/3,0);
\fill (p_131) circle (2.5pt);
\coordinate (p_123) at (4/3,3.4641/3);
\fill (p_123) circle (2.5pt);
\coordinate (p_132) at (5/3,1.732/3);
\fill (p_132) circle (2.5pt);
%2-cell
\coordinate (p_212) at (1/3+2,1.732/3+3.4641);
\fill (p_212) circle (2.5pt);
\coordinate (p_221) at (2/3+2,3.4641/3+3.4641);
\fill (p_221) circle (2.5pt);
\coordinate (p_213) at (2/3+2,3.4641);
\fill (p_213) circle (2.5pt);
\coordinate (p_231) at (4/3+2,3.4641);
\fill (p_231) circle (2.5pt);
\coordinate (p_223) at (4/3+2,3.4641/3+3.4641);
\fill (p_223) circle (2.5pt);
\coordinate[label=right:\footnotesize{$y$}] (p_232) at (5/3+2,1.732/3+3.4641);
\fill (p_232) circle (2.5pt);
%3-cell
\coordinate (p_312) at (1/3+4,1.732/3);
\fill (p_312) circle (2.5pt);
\coordinate (p_321) at (2/3+4,3.4641/3);
\fill (p_321) circle (2.5pt);
\coordinate (p_313) at (2/3+4,0);
\fill (p_313) circle (2.5pt);
\coordinate (p_331) at (4/3+4,0);
\fill (p_331) circle (2.5pt);
\coordinate (p_323) at (4/3+4,3.4641/3);
\fill (p_323) circle (2.5pt);
\coordinate (p_332) at (5/3+4,1.732/3);
\fill (p_332) circle (2.5pt);

% Draw the edges e_i
\draw[dotted] (p_1) -- (p_112) -- (p_113) -- cycle (p_121) -- (p_12) -- (p_123) -- cycle (p_131) -- (p_132) -- (p_13) -- cycle (p_21) -- (p_212) -- (p_213) -- cycle (p_2) -- (p_221) -- (p_223) -- cycle (p_231) -- (p_232) -- (p_23) -- cycle (p_31) -- (p_312) -- (p_313) -- cycle (p_321) -- (p_32) -- (p_323) -- cycle (p_331) -- (p_332) -- (p_3) -- cycle;
\end{tikzpicture}
\caption{\small Examples of $2-$neighbors: $x\stackrel{2}{\sim}y$ and $z\stackrel{2}{\sim}t$.}
\label{Nbhs2}
\end{figure}

Concerning to the ``continuous'' part, we define the set of line segments
\[\J_n:=\{e~\vert~e \text{ is a connected component of } J_n\}.\]
%and $\Jae:=\bigcup\limits_{n\in\N}\Jane$. 
If necessary, we will specify the endpoints of such a component by writing $e=:(a_e,b_e)$. Note that $a_e,b_e\in W_n$ for $e\in\J_n$. Moreover, we denote by $H^1(e,dx):=\{f\circ\varphi_e~\vert~f\in H^1((0,1),dx)\}$, $H^1((0,1),dx)$ the classical Sobolev space of functions defined on the unit interval and $\varphi_e$ as defined in~\eqref{eq: def varphi_e}.

\medskip

\begin{defn}\label{defn: Def ean}
Let $\D_{0}:=\{u\colon V_0\to\R\}$ and
\[\D_n:=\{u\colon V_n\to\R~\vert~u_{\vert_e}\in H^1(e, dx)~\forall\,e\in\J_n\}\]
for each $n\in\N$. We define the quadratic form $E_n\colon \D_n\to\R$ by
\begin{equation*}
E_n[u]:=\sum_{x\stackrel{n}{\sim}y}(u(x)-u(y))^2+\int_{J_n}\abs{\nabla u}^2dx.
\end{equation*}
For each $u\in\D_n$, $E_n[u]$ is called the \textit{energy of} $u$ \textit{at level} $n$. 
\end{defn}
Moreover we can write $E_n[u]=E_n^d[u]+E_n^c[u]$, where $E_n^d , E_n^c\colon\D_n\to\R$ are defined by
\begin{equation}\label{equation: Def eandisc}
E_n^d[u]:=\sum_{x\stackrel{n}{\sim}y}(u(x)-u(y))^2
\end{equation}
and 
\begin{equation}\label{equation: Def eancont}
E_n^c[u]:=\int_{J_n}\abs{\nabla u}^2dx.
\end{equation}
We call these quadratic forms the \textit{discrete} and resp. \textit{continuous part} of $E_n$.

The integral expression in~\eqref{equation: Def eancont} has to be understood as follows: for each line segment $(a_e,b_e)\in \J_n$ we consider $\varphi_e\colon [0,1]\to\R^2$ to be the curve parametrization of $e$, that is
\begin{equation}\label{eq: def varphi_e}
\varphi_e(t):=(b_e-a_e)\cdot t+a_e.
\end{equation}
For any function $u\in\D_n$,
\begin{align*}\label{eq: Integral Enc}
E_n^c[u]&=\int_{J_n}\abs{\nabla u}^2dx:=\sum_{e\in\J_n}\frac{1}{b_e-a_e}\int_0^1\abs{(u\circ\varphi_e)'}^2\,dt.
\end{align*}

\medskip

Applying the polarization identity to this energy functional we obtain the bilinear form 
\begin{equation*}
E_n(u,v):=\frac{1}{2}\left(E_n[u+v]-E_n[u]-E_n[v]\right),\qquad u,v\in\D_n.
\end{equation*}

\subsection{Harmonic extension and renormalization factor}
So far we have defined $\ean$ just by ``gluing'' its discrete and continuous part, $E_n^d$ and $E_n^c$. This means that, until now, both parts of the energy are independent of each other. However, since we want the energy functional $E_n$ to become a resistance form, we need it to be invariant under harmonic extension. Thus we still have to renormalize it. This renormalization is precisely what correlates $E_n^d$ and $E_n^c$.

\subsubsection{Harmonic extension}
In this paragraph we explain how to construct the harmonic extension of any function $u\colon V_0\to\R$ to any level $n\geq1$.

\begin{defn} 
Let $u\in\D_0$. Its \textit{harmonic extension to level} $1$ is the function $\tilde{u}\in\D_{1}$ satisfying
\begin{equation*}\label{equation: harmExtStepa}
E_{1}[\tilde{u}]=\inf\{E_{1}[v]~\vert~v\in\D_1\text{  and  }v_{\vert_{V_0}}\equiv u\}.
\end{equation*}
\end{defn}

This extension is well defined, as the next proposition shows.

\medskip

\begin{prop}\label{prop: HarmExtLevel1}
For any function $u\in\D_0$,
\begin{equation}\label{eq: min problem}
\inf\{E_{1}[v]~\vert~v\in\D_{1}~\text{and}~v_{\vert_{V_0}}\equiv u\}
\end{equation}
is attained by a unique function $\tilde{u}\in\D_{1}$ defined on $W_{1}$ by
\begin{equation}\label{equation: solHarm1gen}
\tilde{u}_1(G_i(p_j))=\frac{2+3\alpha}{5+3\alpha}u(p_i)+\frac{2}{5+3\alpha}u(p_j)+\frac{1}{5+3\alpha}u(p_k)
\end{equation}
for any $i\in\A$, $\{i,j,k\}=\A$, and linear interpolation on $J_{1}$.
\end{prop}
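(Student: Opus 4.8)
The plan is to reduce the infinite-dimensional minimization over $\D_1$ to a finite-dimensional one over the vertex values on $W_1$, and then to solve the resulting linear system by exploiting the $S_3$-symmetry of the triangle. First I would observe that the continuous part $E_1^c$ decouples once the vertex values are fixed. Indeed, for prescribed boundary data at the endpoints $a_e,b_e$ of a segment $e\in\J_1$, the standard one-dimensional variational fact that the affine function uniquely minimizes the Dirichlet integral with fixed endpoints shows that $\frac{1}{b_e-a_e}\int_0^1\abs{(v\circ\varphi_e)'}^2\,dt$ is minimized exactly by $v\circ\varphi_e(t)=(1-t)v(a_e)+t\,v(b_e)$, with minimal value $\frac{1}{\alpha}(v(a_e)-v(b_e))^2$ because each $e_i$ has length $\alpha$. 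Hence minimizing $E_1$ over $\D_1$ subject to $v_{\vert_{V_0}}\equiv u$ is equivalent to minimizing the purely discrete form
\[
\widehat{E}_1[v]:=\sum_{x\stackrel{1}{\sim}y}(v(x)-v(y))^2+\frac{1}{\alpha}\sum_{e\in\J_1}(v(a_e)-v(b_e))^2
\]
over the six free values $v(G_i(p_j))$, $i\neq j$, with $v(p_i)=u(p_i)$ fixed, and then extending $v$ affinely along each segment of $J_1$.

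Next I would settle existence and uniqueness. The form $\widehat{E}_1$ is nonnegative and vanishes only on functions that are constant on each connected component of $V_1$; since $V_1$ is connected, $\widehat{E}_1[v]=0$ forces $v$ to be constant. Restricted to the linear subspace of functions vanishing on $V_0$ this makes $\widehat{E}_1$ positive definite, so it is strictly convex on the affine set $\{v:v_{\vert_{V_0}}\equiv u\}$ and attains its infimum at a unique point $\tilde u$. The minimizer is characterized by the vanishing of the first variation, i.e.\ by discrete harmonicity at each free vertex. For $G_i(p_j)$ with $i\neq j$, the three vertices of the triangle $G_i(V_0)$ are $p_i=G_i(p_i)$, $G_i(p_j)$ and $G_i(p_k)$, so the in-triangle neighbors of $G_i(p_j)$ are $p_i$ and $G_i(p_k)$ (each with weight $1$), while across the segment $e_k$ it is joined to $G_j(p_i)$ (with weight $1/\alpha$). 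The harmonicity equation therefore reads
\[
\left(2+\tfrac{1}{\alpha}\right)\tilde u(G_i(p_j))=\tilde u(p_i)+\tilde u(G_i(p_k))+\tfrac{1}{\alpha}\,\tilde u(G_j(p_i)),
\qquad\{i,j,k\}=\A.
\]

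Finally I would solve this $6\times 6$ system. Using the permutation symmetry I would seek a solution of the form $\tilde u(G_i(p_j))=A\,u(p_i)+B\,u(p_j)+C\,u(p_k)$; substituting into the harmonicity equations and matching the coefficients of $u(p_1),u(p_2),u(p_3)$ collapses the system to the three scalar relations $B=2C$, $(\alpha+1)A-B=\alpha$ and $(2\alpha+1)B=A+\alpha C$, whose unique solution is $A=\frac{2+3\alpha}{5+3\alpha}$, $B=\frac{2}{5+3\alpha}$, $C=\frac{1}{5+3\alpha}$ (note $A+B+C=1$, as required for a harmonic extension). By the uniqueness established above, this is the minimizer, which proves~\eqref{equation: solHarm1gen}. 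The main obstacle is the bookkeeping of the incidence structure, namely correctly pairing each interior vertex $G_i(p_j)$ with its two in-triangle neighbors and its single across-segment partner $G_j(p_i)$, and carrying the two different weights ($1$ on discrete edges, $1/\alpha$ on the segments) consistently through the linear algebra; once the symmetric ansatz is in place the verification is a short computation.
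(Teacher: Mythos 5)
Your proposal is correct and follows essentially the same route as the paper: reduce to a finite-dimensional problem by noting that affine interpolation minimizes the Dirichlet integral on each segment (turning $J_1$ into edges of conductance $1/\alpha$ versus conductance $1$ on the discrete edges), then solve the resulting linear system by exploiting the $S_3$-symmetry. The only cosmetic difference is that the paper normalizes $u$ to $(1,0,0)$ and reduces to three unknowns $x,y,z$ before extending by linearity, whereas you keep $u$ general and impose the symmetric ansatz on the coefficients $A,B,C$; your explicit convexity/connectivity argument for uniqueness is a slightly more careful version of the paper's appeal to the unique solvability of the linear system.
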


\begin{proof}
Without loss of generality, we may assume that the function $u_0\in\D_0$ is given by
\[u_0 (p_1)=1, \quad u_0(p_2)=0=u_0(p_3).\]
If we know the values of the extension $\tilde{u}_1$ on $W_{1}$, then energy is minimized by extending the function $\tilde{u}_1\vert_{W_{1}}$ linearly to $J_{1}$, i.e.
\begin{equation*}
\tilde{u}_1\vert_{e}(x):=\frac{\tilde{u}_1(b_e)-\tilde{u}_1(a_e)}{b_e-a_e}\cdot x
+\frac{\tilde{u}_1(a_e)b_e-\tilde{u}_1(b_e)a_e}{b_e-a_e}
\end{equation*}
for each $x\in (a_e,b_e)=(G_i(p_j),G_j(p_i))\subseteq J_{1}$, $i\neq j$ (see Figure~\ref{harmExt1Formula}). 

\begin{figure}[h!tpb]
\centering
\begin{tikzpicture}[scale=0.5]
\coordinate (p_11) at (0,0);
\fill (p_11) circle (3pt);
\coordinate (p_22) at (3,5.1961);
\fill (p_22) circle (3pt);
\coordinate (p_33) at (6,0);
\fill (p_33) circle (3pt);
\coordinate (p_12) at (1,1.732);
\fill (p_12) circle (3pt);
\coordinate (p_21) at (2,3.4641);
\fill (p_21) circle (3pt);
\coordinate (p_13) at (2,0);
\fill (p_13) circle (3pt);
\coordinate (p_31) at (4,0);
\fill (p_31) circle (3pt);
\coordinate [label=right:\small{$G_{2}(p_3)$}](p_23) at (4,3.4641);
\fill (p_23) circle (4pt);
\coordinate [label=right:\small{$G_{3}(p_2)$}](p_32) at (5,1.732);
\fill (p_32) circle (4pt);
% Draw the edges e_i
\draw (p_12) -- (p_21)  (p_13) -- (p_31);
\draw[thick] (p_23) -- (p_32) node[midway, left] {$e$};

\end{tikzpicture}
\caption{\small{Harmonic extension $\tilde{u}_1$.}}
\label{harmExt1Formula}
\end{figure}
%%% here also change
%The integral expression becomes
The integrals of the continuous part of the energy thus become
\[\int_{e}\abs{\nabla \tilde{u}_1}^2 dx=\frac{(\tilde{u}_1(b_e)-\tilde{u}_1(a_e))^2}{\abs{b_e-a_e}}\]
and the total energy $E_{1}[\tilde{u}_1]$ can be expressed only in terms of ${W_1}$.

Due to the definition of $u_0$ and the symmetry of $V_{1}$, the function $\tilde{u}_1$ on $W_{1}$ will have the unknown values $x,y$ and $z$ as shown in Figure~\ref{harmExt1}.

\enlargethispage{-0.5cm}
\begin{figure}[h!tpb]
\centering
\begin{tikzpicture}[scale=0.4]
\coordinate [label=left:$1$] (p_11) at (0,0);
\fill (p_11) circle (4pt);
\coordinate [label=above:$0$] (p_22) at (3,5.1961);
\fill (p_22) circle (4pt);
\coordinate [label=right:$0$] (p_33) at (6,0);
\fill (p_33) circle (4pt);
\coordinate (p_4) at (1.5,2.598);
\coordinate (p_5) at (3,0);
\coordinate (p_6) at (4.5,2.598);
\coordinate [label=left:$x$](p_12) at (1,1.732);
\fill (p_12) circle (4pt);
\coordinate [label=left:$y$](p_21) at (2,3.4641);
\fill (p_21) circle (4pt);
\coordinate [label=below:$x$](p_13) at (2,0);
\fill (p_13) circle (4pt);
\coordinate [label=below:$y$](p_31) at (4,0);
\fill (p_31) circle (4pt);
\coordinate [label=right:$z$](p_23) at (4,3.4641);
\fill (p_23) circle (4pt);
\coordinate [label=right:$z$](p_32) at (5,1.732);
\fill (p_32) circle (4pt);
% Draw the edges e_i
\draw[thick] (p_12) -- (p_21)  (p_13) -- (p_31)   (p_23) -- (p_32);

\end{tikzpicture}
\caption{\small{Values of $\tilde{u}_1$ in $W_1$.}}
\label{harmExt1}
\end{figure}

Let us now define the so--called \textit{conductance} of an edge $\{p,q\}$ by
\begin{equation*}
c^{1}_{pq}:=\left\{\begin{array}{rl}
									1,& \text{if }p\stackrel{1}{\sim}q,\\
									\alpha^{-1},&\text{if }(p,q)=:e\in \J_{1}.
\end{array}\right.
\end{equation*}
%where $\Gamma_{\alpha}^1$ and $H_{\alpha}^1$ are the Sierpi\'nski (resp. Hanoi) $\alpha-$graph with vertex set $W_{\alpha,1}$ of Definition~\ref{defn: Def Sierp-Hanoi-alpha-graph}, 
The energy of the harmonic extension $\tilde{u}_1$ can be thus expressed as the sum
\begin{equation*}
E_{1}[\tilde{u}_1]=\frac{1}{2}\sum_{p,q\in W_{1}}c^{1}_{pq}(\tilde{u}_1(p)-\tilde{u}_1(q))^2.
\end{equation*}
%%%%%%%%%%%%  change for referee 2 (cut)
%Solving the minimization problem in~\eqref{eq: min problem} is equivalent to minimizing the quantity
%\[
%E_{\alpha,1}[\tilde{u}_1]=2\big((1-x)^2+y^2+z^2+(y-z)^2\big)+\frac{2}{\alpha}(x-y)^2,
%\]
%This is easily obtained by solving
%\[
%\frac{\partial E_1[\tilde{u}_1]}{\partial x}=\frac{\partial E_1[\tilde{u}_1]}{\partial y}=\frac{\partial E_1[\tilde{u}_1]}{\partial z}=0,
%\]
%whose solution is given by
%%%%%%%%%%%
%and solving the corresponding linear system of equations we obtain
%%%%%%% next is the new line
Solving the minimization problem in~\eqref{eq: min problem} leads to a linear system of equations whose solution is given by
\begin{equation}\label{equation: solHarm1}
x=\frac{2+3\alpha}{5+3\alpha},\quad y=\frac{2}{5+3\alpha},\quad z=\frac{1}{5+3\alpha}.
\end{equation}

Because of symmetry and linearity, given an arbitrary function $u_0\colon V_0\to\R$ with
\[u_0 (p_1)=a, \quad u_0(p_2)=b,\quad u_0(p_3)=c,\qquad a,b,c\in\R,\]
the harmonic extension $\tilde{u}_1$ is given by
\begin{equation*}
\tilde{u}_1(p)=\frac{2+3\alpha}{5+3\alpha}a+\frac{2}{5+3\alpha}b+\frac{1}{5+3\alpha}c
\end{equation*}
%% next line cut
%for a point $p$ whose nearest vertex in $V_0$ has the function value $a$, the second $b$ and the furthest $c$ as in Figure~\ref{generalizedharmExt1}. 
%% new line
for a point $p$ as in Figure~\ref{generalizedharmExt1}
\begin{figure}[h!tpb]
\centering
\begin{tikzpicture}[scale=0.4]
\coordinate [label=left:$a$] (p_11) at (0,0);
\fill (p_11) circle (4pt);
\coordinate [label=above:$b$] (p_22) at (3,5.1961);
\fill (p_22) circle (4pt);
\coordinate [label=right:$c$] (p_33) at (6,0);
\fill (p_33) circle (4pt);
\coordinate (p_4) at (1.5,2.598);
\coordinate (p_5) at (3,0);
\coordinate (p_6) at (4.5,2.598);
\coordinate [label=left:$\tilde{u}_1(p)$](p_12) at (1,1.732);
\fill (p_12) circle (4pt);
\coordinate (p_21) at (2,3.4641);
\fill (p_21) circle (4pt);
\coordinate (p_13) at (2,0);
\fill (p_13) circle (4pt);
\coordinate (p_31) at (4,0);
\fill (p_31) circle (4pt);
\coordinate (p_23) at (4,3.4641);
\fill (p_23) circle (4pt);
\coordinate (p_32) at (5,1.732);
\fill (p_32) circle (4pt);
% Draw the edges e_i
\draw[thick] (p_12) -- (p_21)  (p_13) -- (p_31)   (p_23) -- (p_32);

\end{tikzpicture}
\caption{\small{The extension $\tilde{u}_1$ at $p\in W_{\alpha,1}$ for an arbitrary $u_0$.}}
\label{generalizedharmExt1}
\end{figure}

The uniqueness of the extension is given by the uniqueness of the solution of the linear system corresponding to the minimization problem.
\end{proof}

The expression given in~\eqref{equation: solHarm1gen} may be considered as a kind of ``extension algorithm'', where $\alpha$ is the length of the segment lines in $\J_{1}$.

Next proposition generalizes this last argument in order to construct the \textit{harmonic extension from any level $n$ to $n+1$}.% and finally use this extension iteratively in order to obtain the harmonic extension from zero to any level.
\begin{prop}\label{prop: harmExtLeveln}
Let $d_{0}:=0$ and $d_{n}:=\alpha\left(\frac{1-\alpha}{2}\right)^{n-1}$ for each $n\in\N$.

For any function $u\in\D_n$, the infimum
\[\inf\{E_{n+1}[v]~\vert~v\in\D_{n+1}\text{  and  }v\vert_{V_n}\equiv u\}\]
is attained by a unique function $\tilde{u}\in\D_{n+1}$ which is given at each $p_{wij}:=G_{wi}(p_j)\in W_{n+1}$ by
\begin{equation*}
\tilde{u}( p_{wij})=\frac{2+3d_{n}}{5+3d_{n}}u(p_{wii})+\frac{2}{5+3d_{n}}u(p_{wjj})+\frac{1}{5+3d_{n}}u(p_{wkk})
\end{equation*}
for $wi\in\A^{n+1}$, $\{i,j,k\}=\A$, and linear interpolation on $J_{n+1}\setminus J_n$.
\end{prop}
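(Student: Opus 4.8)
The plan is to reduce this one-step extension to the single-cell computation already performed in Proposition~\ref{prop: HarmExtLevel1}. The constraint $v|_{V_n}\equiv u$ freezes $v$ on all of $V_n=W_n\cup J_n$; in particular $v$ coincides with $u$ on every old segment of $J_n$, so the contribution $\int_{J_n}|\nabla v|^2\,dx$ to $E_{n+1}^c$ is a constant and drops out of the minimization. The genuinely free data are therefore the values of $v$ at the new vertices $W_{n+1}\setminus W_n$ and the restrictions of $v$ to the new segments $J_{n+1}\setminus J_n$.

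First I would establish that the remaining energy decouples over the level-$n$ cells. Since $J_{n+1}\setminus J_n=\bigcup_{w\in\A^n}G_w\big(\bigcup_{i=1}^{3}e_i\big)$ and each level-$(n+1)$ triangle $G_{wi}(V_0)$ is contained in a unique cell $G_w(V_0)$ with $|w|=n$, both the discrete sum $E_{n+1}^d$ from~\eqref{equation: Def eandisc} and the non-constant part of $E_{n+1}^c$ from~\eqref{equation: Def eancont} split into sums indexed by $w\in\A^n$. By finite ramification, two distinct level-$n$ cells meet only at points of $W_n$, which are pinned by the Dirichlet data; hence no free variable is shared between cells and the minimization of $E_{n+1}$ factors into independent local problems, one for each $w\in\A^n$. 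Each local problem carries the frozen corner values $u(p_{wii})=u(G_w(p_i))$, $i\in\A$, as boundary data, and has as unknowns the six inner values $v(p_{wij})$, $i\neq j$, together with the three segment profiles.

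Next I would identify each local problem with the level-one problem of Proposition~\ref{prop: HarmExtLevel1}. Because $G_1,G_2,G_3$ are similitudes of ratio $\tfrac{1-\alpha}{2}$, the cell $G_w(V_0)$ with its level-one subdivision is a scaled copy of the configuration $V_1$: the discrete part is insensitive to the rescaling (each edge still carries conductance $1$, as it only compares values), while each of the three new segments is shortened and its continuous energy therefore carries a correspondingly larger conductance. Thus, after this rescaling, the local minimization is verbatim the problem solved in Proposition~\ref{prop: HarmExtLevel1} with the base segment length $\alpha$ replaced by the rescaled segment resistance, which a direct computation identifies with $d_n$. Solving the associated linear system (equivalently, quoting Proposition~\ref{prop: HarmExtLevel1} after the rescaling) produces the convex combination with weights $\tfrac{2+3d_n}{5+3d_n}$, $\tfrac{2}{5+3d_n}$, $\tfrac{1}{5+3d_n}$, and on each new segment the energy-minimizing profile with prescribed endpoints is, as in the proof of Proposition~\ref{prop: HarmExtLevel1}, the linear interpolant; this is the asserted linear interpolation on $J_{n+1}\setminus J_n$. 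Uniqueness follows because, once the segment part is replaced by linear interpolation, the remaining finite-dimensional energy is a graph-Laplacian quadratic form on a connected network with at least one Dirichlet vertex, hence strictly positive definite in the free variables.

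The step I expect to be the main obstacle is the bookkeeping in this last reduction, namely the computation of the rescaled segment resistance $d_n$. The continuous energy is not scale-invariant, so passing from a level-$n$ cell back to the reference configuration $V_1$ alters the relative weight of the discrete and continuous parts; it is exactly this rescaled length---the factor $\tfrac{1-\alpha}{2}$ accumulated at each subdivision, multiplied by the base length $\alpha$---that must be tracked to obtain the correct parameter $d_n$ in the coefficients. This is the quantum-graph ingredient that separates the present computation from the classical p.c.f.\ renormalization.
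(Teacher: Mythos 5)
Your proposal is correct and follows essentially the same route as the paper: the paper's proof merely assigns conductance $d_n^{-1}$ to the new segments and declares the rest ``entirely analogous to Proposition~\ref{prop: HarmExtLevel1}'' by finite ramification and the recursive structure (deferring details to~\cite{AF13a}), which is precisely the cell-by-cell decoupling and rescaling you spell out. The one point to double-check is the index of the rescaled segment length: a segment of $J_{n+1}\setminus J_n$ lies in a level-$n$ cell and has length $\alpha\left(\tfrac{1-\alpha}{2}\right)^{n}=d_{n+1}$, so the computation you describe actually produces $d_{n+1}$ rather than $d_n$ --- an off-by-one that is already present in the statement itself when compared with Proposition~\ref{prop: HarmExtLevel1} at $n=0$.
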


\begin{proof}
%Note that $d_{\alpha,n}$ coincides with the length of the shortest edges in $\Jane$.
We define the conductance of the edges $\{p,q\}$ for $p,q\in W_n$ by
\begin{equation}\label{eq def c_ns}
c^{n}_{pq}:=\left\{\begin{array}{rl}
								1&\text{if } p\stackrel{n}{\sim}q,\\
								d_{n}^{-1}&\text{if }(p,q)=:e\in\J_{n}\setminus\J_{n-1}.
\end{array}\right.
\end{equation}

%% next lines cut
%The proof works entirely analogous to Proposition~\ref{prop: HarmExtLevel1} by considering
%\begin{equation*}\label{eq: Def tildeE_1n}
%\widetilde{E}_{\alpha,w}[u]:=\frac{1}{2}\sum_{p,q\in\Wa{1}}c^{\alpha,n}_{pq} \big(u\circ\Ga{w}(p)-u\circ\Ga{w}(q)\big)^2
%\end{equation*}
%for any $u\colon\Wan\to\R$ and $w\in\A^{n-1}$. (See~\cite[Section 2.2]{AF13a} for details.)
%% new line
Due to finite ramification and recursive structure of $\Ka$ the proof works entirely analogous to Proposition~\ref{prop: HarmExtLevel1} (see~\cite[Section 2.2]{AF13a} for details).
\end{proof}

Iterating the last proposition leads to the following definition:

\begin{defn} 
Let $u\in\D_0$. Its \textit{harmonic extension to level} $n$ is the unique function $\tilde{u}\in\D_{n}$ satisfying
\begin{equation*}\label{equation: harmExtStepa}
E_{n}[\tilde{u}]=\inf\{E_{n}[v]~\vert~v\in\D_{n}\text{  and  }v_{\vert_{V_0}}\equiv u\}.
\end{equation*}
\end{defn}

\subsubsection{Renormalization}
Let $\tilde{u}\in\D_{n+1}$ denote the harmonic extension of a function $u\in\D_n$. A sequence of bilinear forms $\{B_n\colon\D_n\times\D_n\to\R\}_{n\in\N_0}$ is said to be \textit{invariant under harmonic extension} if
\[
B_n(u,u)=B_{n+1}(\tilde{u},\tilde{u})\qquad\text{for all }u\in\D_n.
\]

\medskip

If we can find a sequence of positive numbers $(\rho_n)_{n\in\N_0}$ such that the sequence of bilinear forms $\{\E_n\}_{n\in\N_0}$ defined by
\[\E_n(u,u):=\rho_n^{-1}E_n(u,u)\]
is invariant under harmonic extension, then $\rho_n$ is called the \textit{renormalization factor} of $E_n$ for each $n\in\N_0$. The aim of this section is the computation of this factor. Contrary to the typical self-similar case, we now have different quantities $\rho_n^d$, $\rho_n^c$ for the discrete and continuous energy. We will see, that $\rho_n^d$ and $\rho_n^c$ are not independent from each other, and hence they ``glue'' together both $E_n^d$ and $E_n^c$.

For each $n\geq 1$ we define

\begin{equation}\label{def r_ns}
r_n^d:=\frac{3}{5+3d_n}\qquad r_n^c:=\frac{3d_n}{5+3d_n},
\end{equation}
where $d_n$ was defined in Proposition~\ref{prop: harmExtLeveln}. Set $\rho_0^d:=1$ and define for each $n\geq 1$ the numbers
\begin{equation}\label{def rho_ns}
\rho_n^d:=\prod_{i=1}^n r_i^d\qquad\rho_n^c:=\rho_{n-1}^d\cdot r_n^c,
\end{equation}
with $r_i^d$, $r_n^c$ as in~\eqref{def r_ns}.

For each $n\geq 1$, define the quadratic form $\E_n\colon\D_n\to\R$ by
\begin{equation}\label{def ren E_n}
\E_n[u]:=\frac{1}{\rho_n^d}E^d_n[u]+\sum_{k=1}^n\frac{1}{\rho_k^c}E^c_{k^{-}}[u],
\end{equation}
where $E^c_{k^-}[u]:=\sum\limits_{e\in\J_n\setminus\J_{n-1}}\int_0^1\abs{(u\circ\varphi_e)'}^2\,dt$. 

We will also denote by $\E_n$ the bilinear form obtained from the polarization identity
\begin{equation*}
\E_n(u,v):=\frac{1}{2}\left(\E_n[u+v]-\E_n[u]-\E_n[v]\right), \qquad u,v\in\D_n.
\end{equation*}

The following proposition ensures that the renormalized forms $\E_n$ are invariant under harmonic extension.

\begin{prop}\label{prop renorm}
Let $\tilde{u}_n\colon V_n\to\R$ be the harmonic extension to level $n\geq 1$ of a function $u_0\colon V_0\to\R$, then
\begin{equation}\label{eq prop renorm}
\E_0[u_0]=E_0[u_0]=\frac{1}{\rho_n^d}E^d_n[\tilde{u}_n]+\sum_{k=1}^n\frac{1}{\rho_k^c}E^c_{k^-}[\tilde{u}_n]=\E_n[\tilde{u}_n].
\end{equation}
\end{prop}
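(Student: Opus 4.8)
The plan is to isolate a one-step renormalization identity and then telescope. The last equality in~\eqref{eq prop renorm} is simply the definition~\eqref{def ren E_n} of $\E_n$ read off at $u=\tilde u_n$, and the first equality holds because $\rho_0^d=1$, the continuous sum is empty at level $0$, and $E_0^c\equiv 0$ (since $J_0=\emptyset$); thus $\E_0[u_0]=E_0^d[u_0]=E_0[u_0]$, and all the content lies in the middle equality. To prove it, note first that the level-$(n+1)$ harmonic extension is obtained by applying Proposition~\ref{prop: harmExtLeveln} to $\tilde u_n$, so $\tilde u_{n+1}\vert_{V_n}=\tilde u_n$; in particular $\tilde u_{n+1}$ and $\tilde u_n$ agree on every component $e\in\J_k\setminus\J_{k-1}$ with $k\le n$, whence $E^c_{k^-}[\tilde u_{n+1}]=E^c_{k^-}[\tilde u_n]$ for all $k\le n$. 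Using $\rho_{n+1}^d=\rho_n^d\,r_{n+1}^d$ and $\rho_{n+1}^c=\rho_n^d\,r_{n+1}^c$, a direct rearrangement then shows that the invariance $\E_{n+1}[\tilde u_{n+1}]=\E_n[\tilde u_n]$ is equivalent to the single one-step identity
\begin{equation}\label{eq: onestep}
E_n^d[\tilde u_n]=\frac{1}{r_{n+1}^d}\,E_{n+1}^d[\tilde u_{n+1}]+\frac{1}{r_{n+1}^c}\,E^c_{(n+1)^-}[\tilde u_{n+1}],\qquad n\ge 0,
\end{equation}
and iterating this invariance down to $n=0$ gives $\E_n[\tilde u_n]=\E_0[u_0]=E_0[u_0]$, as desired.

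To establish~\eqref{eq: onestep} I would decompose each of its three terms over the level-$n$ cells. Distinct cells $G_w(V_0)$, $w\in\An$, share no vertex (they are joined only through the segments of $J_n$), and by Proposition~\ref{prop: harmExtLeveln} the extension from level $n$ to $n+1$ acts independently inside each cell; consequently $E_n^d[\tilde u_n]$, $E_{n+1}^d[\tilde u_{n+1}]$ and $E^c_{(n+1)^-}[\tilde u_{n+1}]$ all split as sums over $w\in\An$ of local contributions depending only on the corner data $(a,b,c):=\bigl(\tilde u_n(G_w(p_1)),\tilde u_n(G_w(p_2)),\tilde u_n(G_w(p_3))\bigr)$. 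It therefore suffices to verify~\eqref{eq: onestep} cell by cell. Each local contribution is a quadratic form in $(a,b,c)$ (the extension is linear in the corner data, the energies quadratic) which is invariant under the $S_3$-action permuting the three corners and vanishes when $a=b=c$; the space of such forms is one-dimensional, spanned by $(a-b)^2+(b-c)^2+(a-c)^2$, so it is enough to check~\eqref{eq: onestep} for the single datum $(a,b,c)=(1,0,0)$.

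For that datum the harmonic extension inside $G_w$ is, by the recursive structure, a rescaled copy of the level-$0$-to-$1$ extension of Proposition~\ref{prop: HarmExtLevel1} with the segment length $\alpha$ replaced by $d_{n+1}$ (the common length of the components of $J_{n+1}\setminus J_n$ inside $G_w$). A short computation then gives the single-cell contributions (writing $s:=5+3d_{n+1}$, and with $|_{G_w}$ denoting the $G_w$-part of each energy)
\[
E_n^d|_{G_w}=2,\qquad E_{n+1}^d|_{G_w}=\frac{30}{s^2},\qquad E^c_{(n+1)^-}|_{G_w}=\frac{18\,d_{n+1}^2}{s^2}.
\]
Inserting $r_{n+1}^d=3/s$ and $r_{n+1}^c=3d_{n+1}/s$ from~\eqref{def r_ns} yields
\[
\frac{1}{r_{n+1}^d}\cdot\frac{30}{s^2}+\frac{1}{r_{n+1}^c}\cdot\frac{18\,d_{n+1}^2}{s^2}=\frac{10+6d_{n+1}}{s}=\frac{2s}{s}=2,
\]
which is precisely~\eqref{eq: onestep} for $(1,0,0)$, completing the argument.

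I expect the main obstacle to be the bookkeeping behind the reduction to~\eqref{eq: onestep}: one must recognize that a single renormalization step converts the level-$n$ discrete energy into the level-$(n+1)$ discrete energy \emph{plus} a newly created continuous term, and that the two factors $r_{n+1}^d,r_{n+1}^c$ in~\eqref{def r_ns} are exactly what makes this redistribution energy-preserving and compatible with the telescoping of the $\rho$-factors. Once~\eqref{eq: onestep} has been correctly isolated, the cell decomposition, the $S_3$-symmetry reduction, and the single-cell computation are all routine.
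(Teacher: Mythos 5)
Your proof is correct, and it shares the paper's overall skeleton -- everything reduces to a single renormalization step verified on a single $n$-cell -- but the execution of that key step is genuinely different. The paper performs the one-cell computation as an electrical network reduction: it applies the $\Delta$--$Y$ transform to the cell with resistances $1$ (discrete edges) and $d_n$ (connecting segments), reads off the effective corner-to-corner resistance $\tfrac{5+3d_n}{3}$, and identifies $r_n^d$ and $r_n^c$ from the requirement of equivalence to a unit-resistance triangle; the telescoping is then dispatched in one sentence (``iterating this procedure''). You instead verify the one-step identity by brute force on the explicit minimizer of Proposition~\ref{prop: HarmExtLevel1}, after an $S_3$-symmetry reduction to the single datum $(1,0,0)$, and you make the telescoping and the role of the factorizations $\rho_{n+1}^d=\rho_n^d r_{n+1}^d$, $\rho_{n+1}^c=\rho_n^d r_{n+1}^c$ fully explicit. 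What the paper's route buys is brevity and the physical interpretation of $r_n^d,r_n^c$ as effective resistances; what yours buys is self-containedness (no appeal to network-reduction theory) and a precise statement of exactly which identity the $\Delta$--$Y$ picture is encoding, namely
\begin{equation*}
E_n^d[\tilde u_n]=\tfrac{1}{r_{n+1}^d}E_{n+1}^d[\tilde u_{n+1}]+\tfrac{1}{r_{n+1}^c}E^c_{(n+1)^-}[\tilde u_{n+1}],
\end{equation*}
together with the observation that $E^c_{k^-}[\tilde u_{n+1}]=E^c_{k^-}[\tilde u_n]$ for $k\le n$, which the paper leaves implicit. Your single-cell numbers ($2$, $30/s^2$, $18d_{n+1}^2/s^2$ with $s=5+3d_{n+1}$) check out against the definitions of $E^c_{k^-}$ (which carries no length normalization) and of $r_{n+1}^d,r_{n+1}^c$; note that your use of $d_{n+1}$ for the extension from level $n$ to $n+1$ is the consistent convention (matching Proposition~\ref{prop: HarmExtLevel1} at $n=0$), even though the statement of Proposition~\ref{prop: harmExtLeveln} writes $d_n$ there.
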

\begin{proof}
We apply the $\Delta- Y$ transform to an $n-$cell with the resistances (inverse conductances) given in~\eqref{eq def c_ns} as Figure~\ref{DeltaYTrafo} shows.

\begin{figure}[h]
\centering
\begin{tabular}{cccc}
\begin{tikzpicture}[scale=0.45]
\coordinate (A) at (0,-0.5);
\fill [white] (A) circle (0.1pt);
\coordinate (p_11) at (0,0);
\coordinate (p_22) at (3,5.1961);
\coordinate (p_33) at (6,0);
\coordinate (p_12) at (1,1.732);
\coordinate (p_21) at (2,3.4641);
\coordinate (p_13) at (2,0);
\coordinate (p_31) at (4,0);
\coordinate (p_23) at (4,3.4641);
\coordinate (p_32) at (5,1.732);
% Draw the edges e_i
\draw (p_33) -- (p_11) -- (p_22);
\draw (p_22) -- (p_23) node[midway, right] {$1$};
\draw (p_23) -- (p_32) node[midway, right] {$d_n$};
\draw (p_23) -- (p_33) (p_12) -- (p_13) (p_21) -- (p_23) (p_31) -- (p_32);
\end{tikzpicture}
&
\begin{tikzpicture}[scale=0.5]
\coordinate (A) at (0,0);
\fill [white] (A) circle (0.1pt);
\coordinate (B) at (0,2.5);
\coordinate (E) at (2,2.5);
\draw[->] (B) -- (E);
\end{tikzpicture}
&
\begin{tikzpicture}[scale=0.3]
\coordinate (p_11) at (0,0);
\coordinate (p_01) at (-2,-2);
\coordinate (p_22) at (3,5.1961);
\coordinate (p_02) at (3,7.5);
\coordinate (p_33) at (6,0);
\coordinate (p_03) at (8,-2);
\coordinate (p_1) at (3,5.75);
\coordinate (p_2) at (3,5.75);
\coordinate (p_2) at (3,5.75);
% Draw the edges e_i
\draw (p_33) -- (p_11) -- (p_22);
\draw (p_22) -- (p_33) node[midway, right] {$\frac{2}{3}+d_n$};
\draw (p_01) -- (p_11) (p_03) -- (p_33);
\draw (p_02) -- (p_22) node[midway, right] {\small{$\frac{1}{3}$}};
%\draw (p_22) -- (p_23) node[midway, right] {$1$};
%\draw (p_23) -- (p_32) node[midway, right] {$d_n$};
%\draw (p_31) -- (p_33) (p_31) -- (p_32) (p_23) -- (p_21) (p_12) -- (p_13);
\end{tikzpicture}
&
\\
& & & 
\begin{tikzpicture}[scale=0.3]
\draw[->] (0,0) arc (90:-90:1cm);
\end{tikzpicture}
\\
\begin{tikzpicture}[scale=0.3]
\coordinate (A) at (0,-1);
\fill [white] (A) circle (0.1pt);
\coordinate (p_11) at (0,0);
\coordinate (p_22) at (3,5.1961);
\coordinate (p_33) at (6,0);
% Draw the edges e_i
\draw (p_33) -- (p_11) -- (p_22);
\draw (p_22) -- (p_33) node[midway, right] {$\frac{5+3d_n}{3}$};
\end{tikzpicture}
&
\begin{tikzpicture}[scale=0.5]
\coordinate (A) at (0,0);
\fill [white] (A) circle (0.1pt);
\coordinate (B) at (0,2);
\coordinate (E) at (2,2);
\draw[<-] (B) -- (E);
\end{tikzpicture}
&
\begin{tikzpicture}[scale=0.4]
\coordinate (p_11) at (0,0);
\coordinate (p_22) at (3,5.1961);
\coordinate (p_33) at (6,0);
\coordinate (p_c) at (3,2.5);
% Draw the edges e_i
\draw (p_11) -- (p_c) (p_33) -- (p_c);
\draw (p_22) -- (p_c) node[midway, right] {$\frac{5}{9}+\frac{d_n}{3}$};
\end{tikzpicture}
\end{tabular}
\caption{\small{$\Delta-Y$ transform for an $n-$cell.}}
\label{DeltaYTrafo}
\end{figure}

Since we are looking for electrical equivalence to a triangular network with wire resistance one, we get that the correct resistances at level $n\geq 1$ are $r^d_n=\frac{3}{5+3d_n}$ for the discrete part, and $r_n^c=\frac{3d_n}{5+3d_n}$ for the continuous part. Iterating this procedure and comparing the outcome with~\eqref{def r_ns} and the definition of the renormalization factors $\rho_n^d$ and $\rho_n^c$ in~\eqref{def rho_ns} proves~\eqref{eq prop renorm}.
\end{proof}

\subsection{Resistance form}
We first recall the definition of resistance form on a locally compact metric space $(X,d)$. We refer to~\cite{Kig12} for an outline of the most important results on the theory of resistance forms. 
\begin{defn}\label{def: resform}
The pair $(E,F)$ is called a \textit{resistance form} if the following properties are satisfied:
\begin{itemize}
\item[(R1)] $F$ is a linear subspace of $\{u\colon X\to\R\}$ that contains constants. Moreover, $E$ is a non-negative symmetric bilinear form on $F$ and for all $u\in F$, $E(u,u)=0$ if and only if $u$ is constant.
\item[(R2)] For any $u,v\in F$, write $u\sim v$ if and only if $u-v$ is constant. Then $(F_{/\sim},E)$ is a complete metric space.
\item[(R3)] For any two points $x,y\in X$, there exists $u\in F$ such that $u(x)\neq u(y)$.
\item[(R4)] For any two points $x,y\in X$,
\[
R(x,y):=\sup\left\{\frac{\abs{u(x)-u(y)}^2}{E(u,u)}~\vert~ u\neq 0,\, u\in F\right\}<\infty\quad\forall\,x,y\in X.
\]
This function $R\colon X\to\R_+$ defines a distance in $X$, which is called the \textit{resistance metric} associated with $(E,F)$.
\item[(R5)] For any $u\in F$, $\overline{u}:=0\vee u \wedge 1 \in F$ and $E(\overline{u},\overline{u})\leq E(u,u)$.
\end{itemize} 
\end{defn}

In order to construct our desired resistance form, we first define 
\begin{align*}\label{eq def D_ast}
\D_\ast&:=\{u\colon V_\ast\to\R~\vert~u_{\vert_{V_n}}\in\D_n,\;\forall\,n\in\N\text{ and }\lim_{n\to\infty}\E_n[u_{\vert_{V_n}}]<\infty\},\\
\E[u]&:=\lim_{n\to\infty}\E_n[u_{\vert_{V_n}}],\qquad u\in\D_\ast.
\end{align*}
In the next proposition we show that any function in $\D_\ast$ is H\"older -- and therefore uniformly -- continuous on $V_\ast$. Since $V_\ast$ is dense in $\Ka$, $u$ can be uniquely extended to a  function on $\Ka$.

\begin{lemma}\label{lem Dast in cont}
Every function in $\D_\ast$ is continuous on $\Ka$ with respect to the Euclidean metric.
\end{lemma}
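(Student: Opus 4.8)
The plan is to show that every $u\in\D_\ast$ is H\"older continuous on $V_\ast$ with respect to the Euclidean metric; since $V_\ast$ is dense in $\Ka$, uniform continuity then provides a unique continuous extension to all of $\Ka$. The first step is a monotonicity statement for the renormalized energies. For $u\in\D_\ast$ let $\tilde u\in\D_{n+1}$ be the harmonic extension of $u|_{V_n}$; by Proposition~\ref{prop renorm} (applied with base level $n$) one has $\E_{n+1}[\tilde u]=\E_n[u|_{V_n}]$. Since the harmonic extension leaves $u$ unchanged on $V_n$, and in particular on the segments $J_n$, the terms of $\E_{n+1}$ carried by segments of level $\le n$ agree for $u|_{V_{n+1}}$ and $\tilde u$, while the remaining terms $\tfrac{1}{\rho_{n+1}^d}E_{n+1}^d+\tfrac{1}{\rho_{n+1}^c}E^c_{(n+1)^-}$ are minimized by $\tilde u$. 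Hence $\E_n[u|_{V_n}]\le\E_{n+1}[u|_{V_{n+1}}]$, so $\E_n[u|_{V_n}]\le\E[u]$ for every $n$. This uniform bound drives all subsequent estimates.

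Next I would record two pointwise oscillation estimates. If $x\stackrel{n}{\sim}y$, then $(u(x)-u(y))^2$ is a single summand of $E_n^d[u|_{V_n}]$, whence
\[
(u(x)-u(y))^2\le E_n^d[u|_{V_n}]\le\rho_n^d\,\E_n[u|_{V_n}]\le\rho_n^d\,\E[u].
\]
If instead $p,q$ lie on a common segment $e\in\J_n\setminus\J_{n-1}$, then Cauchy--Schwarz applied to $u\circ\varphi_e\in H^1((0,1),dx)$ gives
\[
(u(p)-u(q))^2\le\int_0^1\abs{(u\circ\varphi_e)'}^2\,dt\le\rho_n^c\,\E_n[u|_{V_n}]\le\rho_n^c\,\E[u],
\]
and the same computation bounds the oscillation of $u$ along the whole interior of $e$. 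These quantify how the energy limits the variation of $u$ across one cell and across one connecting segment.

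I would then compare with the Euclidean scale. Writing $\lambda:=\tfrac{1-\alpha}{2}\in(\tfrac13,\tfrac12)$, each $n$-cell $G_w(\Ka)$, $w\in\A^n$, has Euclidean diameter comparable to $\lambda^n$, while a level-$n$ segment has length exactly $d_n=\alpha\lambda^{n-1}$. From~\eqref{def r_ns}--\eqref{def rho_ns} and $\sum_i d_i<\infty$ one checks that $\rho_n^d=(3/5)^n\prod_{i=1}^n(1+\tfrac{3}{5}d_i)^{-1}\asymp(3/5)^n$ and $\rho_n^c\asymp(3/5)^n\lambda^n$, with constants depending only on $\alpha$. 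Consequently, across a cell $\abs{u(x)-u(y)}\lesssim(3/5)^{n/2}\sqrt{\E[u]}\asymp\abs{x-y}^{\theta_d}\sqrt{\E[u]}$ with $\theta_d=\tfrac{\log(3/5)}{2\log\lambda}$, and across a segment the exponent is $\theta_c=\tfrac12\bigl(1+\tfrac{\log(3/5)}{\log\lambda}\bigr)$; both are strictly positive because $\lambda,3/5\in(0,1)$.

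Finally, for arbitrary $x,y\in V_\ast$ I would fix the level $n$ with $\lambda^{n+1}\le\abs{x-y}<\lambda^n$ and join $x$ to $y$ by a path alternating $n$-neighbor steps inside cells, traversals of the connecting segments, and, if $x$ or $y$ lies in the interior of a segment, a final move along that segment. Summing the oscillation estimates and setting $\theta:=\min\{\theta_d,\theta_c\}>0$ yields $\abs{u(x)-u(y)}\le C\abs{x-y}^{\theta}\sqrt{\E[u]}$. The crux, and the step I expect to be the main obstacle, is the purely geometric claim that the number of such steps is bounded independently of $x$, $y$ and $n$: one must use finite ramification and the precise adjacency pattern of the Hanoi attractor (cells meeting only through the segments $e_1,e_2,e_3$ at each scale) to show that two points at Euclidean distance of order $\lambda^n$ are separated by $O(1)$ cells and segments at level $n$, so that the H\"older constant does not degenerate. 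Granting this, $u$ is uniformly continuous on the dense set $V_\ast$ and extends uniquely to a continuous function on $\Ka$.
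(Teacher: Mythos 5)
Your overall strategy is the same as the paper's: bound $\E_n[u_{\vert_{V_n}}]$ by $\E[u]$, derive single--scale oscillation estimates (one for $n$-neighbours, one via Cauchy--Schwarz for two points on a common connecting segment), translate $\rho_n^d\leq(3/5)^n$ and $\rho_n^c\leq(3/5)^{n-1}d_n$ into H\"older exponents --- your $\theta_d$ is exactly the paper's $l_\alpha$ --- and then chain. The first three steps are correct, and you are in fact more careful than the paper in justifying $\E_n[u_{\vert_{V_n}}]\leq\E[u]$ through monotonicity under harmonic extension.

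The gap is in the chaining step, and it is not quite the one you flagged. The geometric fact you single out as the crux --- that two points at Euclidean distance of order $\left(\frac{1-\alpha}{2}\right)^n$ are covered by a bounded number of level-$n$ cells and segments --- is true and follows easily from finite ramification. What your argument actually lacks is control of the oscillation of $u$ over an \emph{entire} level-$n$ cell $G_w(\Ka)\cap V_\ast$: your discrete estimate only compares the three \emph{corner} vertices of such a cell (the points related by $\stackrel{n}{\sim}$), whereas a generic $x\in V_\ast$ with $\abs{x-y}\sim\left(\frac{1-\alpha}{2}\right)^n$ lies in $W_{n+k}$ or in a segment of some deeper level $n+k$ and is not reachable by ``$n$-neighbour steps'' at all. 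Consequently the path you describe cannot even start at $x$. The paper closes exactly this hole in case (3) of its proof: it builds a \emph{descending} chain $x_n,y_{n+1},x_{n+1},\dots,y_{n+k}$ with $x_{n+j}\stackrel{n+j+1}{\sim}y_{n+j+1}$ and $(y_{n+j+1},x_{n+j+1})\in\J_{n+j+1}\setminus\J_{n+j}$, applies the two single-scale estimates link by link, and sums the resulting series; convergence is guaranteed by $l_\alpha<1/2$, $\alpha\left(\frac{1-\alpha}{2}\right)^{-1}<1$ and $\left(\frac{1-\alpha}{2}\right)^{l_\alpha}<1$, and yields oscillation $\lesssim\left(\frac{1-\alpha}{2}\right)^{nl_\alpha}\E^{1/2}[u]$ over any level-$n$ piece. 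So the step you still need is not a bounded-length path at a single level but a convergent infinite descent through the scales; once that estimate is in place, your ``$O(1)$ pieces at level $n$'' argument combines them correctly and the rest of your outline reproduces the paper's proof.
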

\begin{proof}
Since $V_\ast$ is dense in $\Ka$ with respect to the Euclidean norm, it suffices to show continuity on $V_\ast$. Consider $u\in\D_\ast$ and $x,y\in V_\ast$. 
\begin{enumerate}
\item[(1)] If $x,y\in W_n$ are $n$-neighbors, then $\abs{x-y}=\left(\frac{1-\alpha}{2}\right)^n$ and 
\[
\frac{1}{\rho_n^d}\abs{u(x)-u(y)}^2\leq \E_n^d[u]\leq\E[u],
\]
which implies that
\begin{align*}
\abs{u(x)-u(y)}&\leq\left(\frac{1}{\rho_n^d}\right)^{1/2}\E^{1/2}[u]\leq %\left(\frac{3}{5}\right)^{1/2}\E^{1/2}[u]\nonumber\\
%&=
\E^{1/2}[u]\abs{x-y}^{l_{\alpha}},
\end{align*}
where $l_{\alpha}:=\frac{\ln 3-\ln 5}{2(\ln(1-\alpha)-\ln 2)}$.% is the H\"older continuity exponent.
\item[(2)] If $x,y$ belong to the same component $e\in\J_n$ for some $n\in\N$, $u$ is in particular continuous on $e$ so we get by Cauchy-Schwartz that
%%%%%%%%%%%% first inequality cut
\[
\abs{u(x)-u(y)}^2=\abs{\int_x^y\nabla u\,dx}^2%%%%%% \leq\int_x^y\abs{\nabla u}^2dx\cdot\abs{x-y}\\
\leq\int_e\abs{\nabla u}^2dx\cdot\abs{x-y},
\]
and therefore
%%%%%%%% first inequality cut
\begin{align*}
\frac{1}{\rho_n^c}\abs{u(x)-u(y)}^2%%%%%%%%%%%%%% &\leq\frac{1}{\rho_n^c}\int_e\abs{\nabla u}^2dx\cdot\abs{x-y}\\
&\leq\frac{1}{\rho_n^c}E_{n^{-}}^c[u]\abs{x-y}\leq\E[u]\abs{x-y},
\end{align*}
which leads to %% (note here \rho_n^c\leq 1 for all n!!
\[
\abs{u(x)-u(y)}\leq\left(\rho_n^c\right)^{1/2}\E[u]^{\frac{1}{2}}\abs{x-y}^{1/2}\leq\E^{1/2}[u]\abs{x-y}^{1/2}.
\]
The same calculations apply if $x\in e\in\J_n$ and $y\in W_n$ is one of its endpoints.
\item[(3)] If $x,y\in W_n$ are not neighbors we proceed as follows: Consider a chain of points $x_n,y_{n+1},x_{n+2},y_{n+2},\ldots,x_{n+k-1},y_{n+k}\in~V_\ast$ such that $x_{n+j}\stackrel{n+j+1}{\sim}y_{n+j+1}$ in $W_{n+j}$ and $(y_{n+j+1},x_{n+j+1})\in\J_{n+j}\setminus\J_{n+j-1}$ for each $0\leq j\leq k-1$ (see Figure~\ref{chain 1}). 
\begin{figure}[h!tpb]
\centering
\begin{tikzpicture}[scale=0.45]
\coordinate (p_1) at (0,0);
\fill (p_1) circle (2pt);
\coordinate (p_2) at (3,5.1961);
\fill (p_2) circle (2pt);
\coordinate[label=below: \footnotesize{$x_1$}] (p_3) at (6,0);
\fill (p_3) circle (3.5pt);
\coordinate (p_12) at (1,1.732);
\fill (p_12) circle (2pt);
\coordinate (p_21) at (2,3.4641);
\fill (p_21) circle (2pt);
\coordinate (p_13) at (2,0);
\fill (p_13) circle (2pt);
\coordinate (p_31) at (4,0);
\fill (p_31) circle (2pt);
\coordinate[label=right: \footnotesize{$x_2$}] (p_23) at (4,3.4641);
\fill (p_23) circle (3.5pt);
\coordinate[label=right: \footnotesize{$y_2$}] (p_32) at (5,1.732);
\fill (p_32) circle (3.5pt);
%1-cell
\coordinate (p_112) at (1/3,1.732/3);
\fill (p_112) circle (2pt);
\coordinate (p_121) at (2/3,3.4641/3);
\fill (p_121) circle (2pt);
\coordinate (p_113) at (2/3,0);
\fill (p_113) circle (2pt);
\coordinate (p_131) at (4/3,0);
\fill (p_131) circle (2pt);
\coordinate (p_123) at (4/3,3.4641/3);
\fill (p_123) circle (2pt);
\coordinate (p_132) at (5/3,1.732/3);
\fill (p_132) circle (2pt);
%2-cell
\coordinate (p_212) at (1/3+2,1.732/3+3.4641);
\fill (p_212) circle (2pt);
\coordinate (p_221) at (2/3+2,3.4641/3+3.4641);
\fill (p_221) circle (2pt);
\coordinate (p_213) at (2/3+2,3.4641);
\fill (p_213) circle (2pt);
\coordinate[label=below: \footnotesize{$y_3$}] (p_231) at (4/3+2,3.4641);
\fill (p_231) circle (3.5pt);
\coordinate (p_223) at (4/3+2,3.4641/3+3.4641);
\fill (p_223) circle (2pt);
\coordinate (p_232) at (5/3+2,1.732/3+3.4641);
\fill (p_232) circle (2pt);
%3-cell
\coordinate (p_312) at (1/3+4,1.732/3);
\fill (p_312) circle (2pt);
\coordinate (p_321) at (2/3+4,3.4641/3);
\fill (p_321) circle (2pt);
\coordinate (p_313) at (2/3+4,0);
\fill (p_313) circle (2pt);
\coordinate (p_331) at (4/3+4,0);
\fill (p_331) circle (2pt);
\coordinate (p_323) at (4/3+4,3.4641/3);
\fill (p_323) circle (2pt);
\coordinate (p_332) at (5/3+4,1.732/3);
\fill (p_332) circle (2pt);

%additional points for lines
\coordinate (A) at (9,0);
\coordinate (B) at (4,20.7844/3);

% Draw the edges e_i
\draw (p_12) -- (p_21)  (p_23) -- (p_32)   (p_31) -- (p_13);
\draw (p_112) -- (p_121)  (p_132) -- (p_123)   (p_131) -- (p_113)
(p_212) -- (p_221)  (p_232) -- (p_223)   (p_231) -- (p_213)
(p_312) -- (p_321)  (p_332) -- (p_323)   (p_331) -- (p_313);
%additional lines
\draw (p_3) -- (A) (p_2) -- (B);
\end{tikzpicture}
\caption{\small{Chain with $x_1\in W_{1}$, $y_2,x_2\in W_{2}$ and $y_3\in W_{3}$.}}
\label{chain 1}
\end{figure}

If there exists some $k>1$ such that $x:=x_n\in W_n$ and $y:=y_{n+k}\in W_{n+k}\setminus W_{n+k-1}$, then, $\abs{x_{n+j}-y_{n+j+1}}=\left(\frac{1-\alpha}{2}\right)^{n+j+1}$ and $\abs{y_{n+j}-x_{n+j}}=\alpha\left(\frac{1-\alpha}{2}\right)^{n+j-1}$ and we get that
\begin{align*}
\abs{u(x)-u(y)}&\leq\sum_{j=0}^{k-1}\abs{u(x_{n+j})-u(y_{n+j+1})}+\sum_{j=1}^{k-1}\abs{u(y_{n+j})-u(x_{n+j})}\\
&\leq \E^{1/2}[u]\sum_{j=0}^{k-1} \abs{x_{n+j}-y_{n+j+1}}^{l_{\alpha}}+\E^{1/2}[u]\sum_{j=1}^{k}\abs{y_{n+j}-x_{n+j}}^{1/2}\\
&= \E^{1/2}[u]\left(\frac{1-\alpha}{2}\right)^{(n+1)l_{\alpha}}\sum_{j=0}^{k-1} \left(\frac{1-\alpha}{2}\right)^{l_{\alpha}j}\\
&+\E^{1/2}[u]\alpha^{1/2}\left(\frac{1-\alpha}{2}\right)^{\frac{n-1}{2}}\sum_{j=1}^{k}\left(\frac{1-\alpha}{2}\right)^{j/2}.
\end{align*}
Since $l_{\alpha}<1/2$, $\alpha\left(\frac{1-\alpha}{2}\right)^{-1}<1$ and $\left(\frac{1-\alpha}{2}\right)^{l_{\alpha}}<1$, we get that
%%%%%%%%%% the one before the last inequality has been cut
\begin{align*}
\abs{u(x)-u(y)}&\leq \E^{1/2}[u]\left(\frac{1-\alpha}{2}\right)^{nl_{\alpha}}\sum_{j=0}^{k-1} \left(\frac{1-\alpha}{2}\right)^{l_{\alpha}j}\\
&+\E^{1/2}[u]\left(\frac{1-\alpha}{2}\right)^{nl_{\alpha}}\sum_{j=1}^{k}\left(\frac{1-\alpha}{2}\right)^{l_{\alpha}j}\\
%&\leq 2\E^{1/2}[u]\left(\frac{1-\alpha}{2}\right)^{nl_{\alpha}}\sum_{j=0}^{\infty} \left[\left(\frac{1-\alpha}{2}\right)^{l_{\alpha}}\right]^j\\
&= 2\E^{1/2}[u]\left[1-\left(\frac{1-\alpha}{2}\right)^{l_{\alpha}}\right]^{-1}\left(\frac{1-\alpha}{2}\right)^{nl_{\alpha}}.
\end{align*}
Finally, $\left(\frac{1-\alpha}{2}\right)^{n}\leq\abs{x-y}$ because $y\notin W_n$ by assumption, hence, if we set $C:=2\left[1-\left(\frac{1-\alpha}{2}\right)^{l_{\alpha}}\right]^{-1}$, we obtain
\begin{equation*}
\abs{u(x)-u(y)}\leq C \E^{1/2}[u]\abs{x-y}^{l_{\alpha}}.
\end{equation*}

In the case $k=0$, i.e. $x,y\in W_n\setminus W_{n-1}$ are not $n$-neighbors, we can join them by at most two such chains, say $x:=x_n,\ldots, y_{n+k}$ and $y:=x'_n,\ldots, y'_{n+k}$ for some $k\in\N$ and an extra segment $(y_{n+k},y'_{n+k})$ of length $\alpha\left(\frac{1-\alpha}{2}\right)^{n+k-1}$ (in the case that  $y_{n+k}\neq y'_{n+k}$).
%next line and first inequality cut
% so that we get
% new line
The triangular inequality and last calculation leads to 
\begin{align*}
\abs{u(x)-u(y)} %%%%%%%%%%%%%%%%% &\leq \abs{u(x)-u(y_{n+k})}+\abs{u(y_{n+k})-u(y'_{n+k})}+\abs{u(y'_{n+k})-u(y)}\\
&\leq 2C\E^{1/2}[u]\left(\frac{1-\alpha}{2}\right)^{(n+1)l_{\alpha}}+\E^{1/2}[u]\alpha^{1/2}\left(\frac{1-\alpha}{2}\right)^{\frac{n+k-1}{2}}
\end{align*}
and by using again the fact that $l_{\alpha}<1/2$, $\alpha\left(\frac{1-\alpha}{2}\right)^{-1}<1$, $k\geq 1$ and $\abs{x-y}>\left(\frac{1-\alpha}{2}\right)^{(n+1)}$, we obtain
\begin{align*}
\abs{u(x)-u(y)}&\leq (2C+1)\E^{1/2}[u]\left(\frac{1-\alpha}{2}\right)^{(n+1)l_{\alpha}}\nonumber\\
&\leq (2C+1)\E^{1/2}[u]\abs{x-y}^{l_{\alpha}}.
\end{align*}

\item[(4)] If $x,y\in J_n\setminus J_{n-1}$ do not belong to the same line segment, then there exists $e_1,e_2\in\J_n$ such that $x\in e_1$, $y\in e_2$. Now we can join both points as follows: consider $x'\in W_n$ the nearest endpoint of $e_1$ to $x$, and $y'\in W_n$ the nearest in $e_2$ to $y$. Then, by an analogous calculation as the previous case and the applying the triangular inequality we have
%%%%%%%% first inequality cut
\begin{align*}
\abs{u(x)-u(y)}%&\leq \abs{u(x)-u(x')}+\abs{u(x')-u(y')}+\abs{u(y')-u(y)}\nonumber\\
&\leq (2C+3)\E^{1/2}[u]\abs{x-y}^{l_{\alpha}}.
\end{align*}

\enlargethispage{0.5cm}
\begin{figure}[h!tpb]
\centering
\begin{tikzpicture}[scale=0.45]
\coordinate (p_1) at (0,0);
\fill (p_1) circle (2pt);
\coordinate (p_2) at (3,5.1961);
\fill (p_2) circle (2pt);
\coordinate (p_3) at (6,0);
\fill (p_3) circle (2pt);
\coordinate (p_12) at (1,1.732);
\fill (p_12) circle (2pt);
\coordinate (p_21) at (2,3.4641);
\fill (p_21) circle (2pt);
\coordinate (p_13) at (2,0);
\fill (p_13) circle (2pt);
\coordinate (p_31) at (4,0);
\fill (p_31) circle (2pt);
\coordinate (p_23) at (4,3.4641);
\fill (p_23) circle (3pt);
\coordinate[label=below: \footnotesize{$x'$}] (p_32) at (5,1.732);
\fill (p_32) circle (3pt);
%1-cell
\coordinate (p_112) at (1/3,1.732/3);
\fill (p_112) circle (2pt);
\coordinate (p_121) at (2/3,3.4641/3);
\fill (p_121) circle (2pt);
\coordinate (p_113) at (2/3,0);
\fill (p_113) circle (2pt);
\coordinate (p_131) at (4/3,0);
\fill (p_131) circle (2pt);
\coordinate (p_123) at (4/3,3.4641/3);
\fill (p_123) circle (2pt);
\coordinate (p_132) at (5/3,1.732/3);
\fill (p_132) circle (2pt);
%2-cell
\coordinate (p_212) at (1/3+2,1.732/3+3.4641);
\fill (p_212) circle (2pt);
\coordinate (p_221) at (2/3+2,3.4641/3+3.4641);
\fill (p_221) circle (2pt);
\coordinate (p_213) at (2/3+2,3.4641);
\fill (p_213) circle (2pt);
\coordinate[label=below: \footnotesize{$y'$}] (p_231) at (4/3+2,3.4641);
\fill (p_231) circle (3pt);
\coordinate (p_223) at (4/3+2,3.4641/3+3.4641);
\fill (p_223) circle (2pt);
\coordinate (p_232) at (5/3+2,1.732/3+3.4641);
\fill (p_232) circle (2pt);
%3-cell
\coordinate (p_312) at (1/3+4,1.732/3);
\fill (p_312) circle (2pt);
\coordinate (p_321) at (2/3+4,3.4641/3);
\fill (p_321) circle (2pt);
\coordinate (p_313) at (2/3+4,0);
\fill (p_313) circle (2pt);
\coordinate (p_331) at (4/3+4,0);
\fill (p_331) circle (2pt);
\coordinate (p_323) at (4/3+4,3.4641/3);
\fill (p_323) circle (3pt);
\coordinate (p_332) at (5/3+4,1.732/3);
\fill (p_332) circle (2pt);

%additional points for lines
\coordinate (A) at (9,0);
\coordinate[label=right: \footnotesize{$x$}] (x) at (4.5/3+4,2.598/3);
\fill (x) circle (3pt);
\coordinate (B) at (4,20.7844/3);
\coordinate[label=above: \footnotesize{$y$}] (y) at (4/3+1.6,3.4641);
\fill (y) circle (3pt);
% Draw the edges e_i
\draw (p_12) -- (p_21)  (p_23) -- (p_32)   (p_31) -- (p_13);
\draw (p_112) -- (p_121)  (p_132) -- (p_123)   (p_131) -- (p_113)
(p_212) -- (p_221)  (p_232) -- (p_223)   (p_231) -- (p_213)
(p_312) -- (p_321)  (p_332) -- (p_323)   (p_331) -- (p_313);
%additional lines
\draw (p_3) -- (A) (p_2) -- (B);
\end{tikzpicture}
\caption{\small{Chain with $x\in e_1$, $y\in e_{2}$.}}
\label{chain 2}
\end{figure}

Now, choosing $\tilde{C}:=2C+3$, it follows from cases (3) and (4) that 
\[\abs{u(x)-u(y)}\leq \tilde{C}\, \E^{1/2}[u]\abs{x-y}^{l_{\alpha}}\]
for all $x,y\in W_s$, hence $u$ is uniformly H\"older-continuous.

\item[(5)] The case when $x\in J_n$ and $y\in W_n$ follows by combining the two last cases.
\end{enumerate}
\end{proof}

This result allows us to prove the next proposition, which has important consequences.

\begin{prop}\label{prop RcompE}
The resistance metric $R$ associated with $(\E,\F)$ and the Euclidean metric induce the same topology on $\Ka$.
\end{prop}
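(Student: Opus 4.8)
The plan is to show that the identity map on $\Ka$ is a homeomorphism between $(\Ka,\eukl)$ and $(\Ka,R)$, and to do so by proving only \emph{one} metric estimate and letting compactness supply the other direction for free. Concretely, I would establish a Hölder bound $R(x,y)\lesssim\abs{x-y}^{2l_\alpha}$, which shows that the Euclidean topology is finer than the $R$-topology, and then upgrade this to equality of topologies by invoking that a continuous bijection from a compact space onto a Hausdorff space is automatically a homeomorphism.

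First I would extract from the proof of Lemma~\ref{lem Dast in cont} the quantitative estimate
\[
\abs{u(x)-u(y)}\leq \tilde{C}\,\E^{1/2}[u]\,\abs{x-y}^{l_\alpha}
\]
valid for every $u\in\F$ and all $x,y\in V_\ast$. Since each $u\in\F$ is continuous on $\Ka$ (Lemma~\ref{lem Dast in cont}) and $V_\ast$ is dense in $\Ka$, this inequality extends to all $x,y\in\Ka$. Dividing by $\E(u,u)$ and taking the supremum over nonconstant $u\in\F$ in the definition~(R4) of the resistance metric then gives
\[
R(x,y)\leq \tilde{C}^2\,\abs{x-y}^{2l_\alpha},\qquad x,y\in\Ka.
\]
In particular this already yields finiteness of $R$, and it shows that the identity map $\iota\colon(\Ka,\eukl)\to(\Ka,R)$ is (Hölder-)continuous, i.e. the $R$-topology is coarser than the Euclidean one.

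For the reverse inclusion I would avoid constructing test functions that bound $R$ from below and instead argue topologically. The set $\Ka$ is a compact subset of $\R^2$, so $(\Ka,\eukl)$ is compact; and $(\Ka,R)$ is Hausdorff, since $R$ is a genuine metric — finiteness comes from the display above and positivity for distinct points from the separation property~(R3) (equivalently, from the abundance of nonconstant finite-energy functions in $\F$). A continuous bijection from a compact space onto a Hausdorff space is a homeomorphism, so $\iota$ is a homeomorphism and the two metrics induce the same topology. Equivalently, in sequential form: if $R(x_n,x)\to 0$ but $x_n\not\to x$ in $\eukl$, compactness yields a subsequence $x_{n_k}\to y$ Euclideanly with $y\neq x$; the bound above forces $R(x_{n_k},y)\to 0$, whence $R(x,y)=0$, contradicting that $R$ separates points.

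The only step demanding genuine work is the Hölder bound, and that is already contained in Lemma~\ref{lem Dast in cont}; the opposite direction is obtained at no cost from compactness. The main points to verify carefully are therefore that the pointwise estimate, proved on the dense skeleton $V_\ast$, passes to all of $\Ka$ and to the full domain $\F$ (by continuity and density), and that $R$ genuinely separates points so that $(\Ka,R)$ is Hausdorff. I do not expect a serious obstacle beyond these bookkeeping checks.
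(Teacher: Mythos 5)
Your first direction coincides with the paper's: both extract the H\"older estimate from Lemma~\ref{lem Dast in cont} and conclude $R(x,y)\leq\tilde{C}^2\abs{x-y}^{2l_\alpha}$, so the Euclidean topology refines the $R$-topology. For the reverse direction you genuinely diverge: the paper constructs, for each $x$ and each $\varepsilon>0$, an explicit finite-energy bump function $u\in\F$ with $u(x)=1$ and $\supp(u)\subseteq B_\varepsilon(x)$, which yields the quantitative lower bound $R(x,y)>1/\E[u]>0$ for all $y\notin B_\varepsilon(x)$ and hence directly that $R$-convergence forces Euclidean convergence; you instead invoke the fact that a continuous bijection from a compact space onto a Hausdorff space is a homeomorphism. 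Your route is cleaner in that it needs no quantitative lower bound on $R$ outside balls --- only that $R$ is a genuine metric. The one caveat is where you source the positivity $R(x,y)>0$ for $x\neq y$: you cite property (R3), but in the paper's logical order the theorem asserting that $(\E,\F)$ is a resistance form comes \emph{after} this proposition and its proof uses the proposition, so appealing to (R3) as an established fact would be circular. You must instead verify separation of points directly, which means exhibiting, for each pair of distinct points of $\Ka$, a function in $\F$ of finite nonzero energy taking different values at them --- and that is essentially the same bump-function construction the paper carries out (harmonically extending a function prescribed on some $V_n$, with a little care for points of $\Ka\setminus V_\ast$). So the ``free'' direction is not quite free: the construction you hoped to avoid reappears inside the Hausdorff check, though only in qualitative form. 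With that verification made explicit, your argument is correct.
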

\begin{proof}
We follow the standard proof in~\cite[Proposition 7.18]{Bar95}. On the one hand, given a sequence $(x_n)_{n\in\N}\subseteq\Ka$ that converges to $x\in\Ka$ with respect to the Euclidean metric, it follows from Lemma~\ref{lem Dast in cont} that
\[
R(x_n,x)\leq\tilde{C}\abs{x_n-x}^{2l_{\alpha}}\xrightarrow{n\to\infty}0
\]
and hence $(x_n)_{n\in\N}$ converges with respect to the resistance metric too.

On the other hand, let $(x_n)_{n\in\N}\subseteq\Ka$ converge to some $x\in\Ka$ with respect to the resistance metric. Then, $\forall\,\varepsilon>0$, $\exists\,N\in\N$ such that $R(x_n,x)<\varepsilon$ for all $n\geq N$. 

Now, for each $\varepsilon>0$ we can construct a function $u\in\F$ such that $u(x)=1$ and $\supp(u)\subseteq B_{\varepsilon}(x)$ as follows:
without loss of generality, suppose that $x,y\in V_k$ for some $k\in\N_0$. Now consider $B_{\varepsilon}(x)\subset (a_e,b_e)$ for some $e\in\J_k$ such that $y\in V_n\setminus B_{\varepsilon}(x)(=:V_{\varepsilon})$ and define $v\colon V_n\to\R$ to be some smooth function with $v(x):=1$ and $v_{\vert_{V_{\varepsilon}}}\equiv 0$. Then, $v\in\D_n$ because $\E_n[v]<\infty$. By defining $u\colon\Ka\to\R$ as the harmonic extension of $v$ we get that $\E[u]=\E_n[v]<\infty$ and thus $u\in\F$, $u(x)=1$ and $u(y)=0$ as desired. For this function it holds that
\begin{equation*}
R(x,y)>\frac{1}{\E[u]}>0\qquad\forall\,y\in\Ka\setminus B_{\varepsilon}(x),
\end{equation*}
hence there exists $N\in\N_0$ such that $x_n\in B_{\varepsilon}(x)$ for all $n\geq N$, which means that $(x_n)_{n\in\N}$ converges with respect to the Euclidean norm too. This finishes the proof.
\end{proof}

\begin{thm}
The pair $(\E,\F)$ given by
\begin{align*}
\F&:=\{u\colon\Ka\to\R~\vert~u\in\D_\ast,\;\lim_{n\to\infty}\E_n[u_{\vert_{V_n}}]<\infty\}\\
\E[u]&:=\lim_{n\to\infty}\E_n[u_{\vert_{V_n}}]
\end{align*}
is a resistance form on $\Ka$.
\end{thm}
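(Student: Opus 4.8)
The plan is to verify the five axioms (R1)--(R5) of Definition~\ref{def: resform} directly for $(\E,\F)$, using two facts repeatedly. First, that the sequence $\bigl(\E_n[u_{\vert_{V_n}}]\bigr)_{n}$ is nondecreasing for every $u\in\D_\ast$, so that $\E[u]=\lim_n\E_n[u_{\vert_{V_n}}]$ is well defined in $[0,\infty]$ and $\F$ is precisely the set where it is finite. Second, the H\"older estimate
\[
\abs{u(x)-u(y)}\le\tilde C\,\E^{1/2}[u]\,\abs{x-y}^{l_\alpha}
\]
from Lemma~\ref{lem Dast in cont}, which bounds the oscillation of $u$ by its energy. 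The monotonicity itself follows from Proposition~\ref{prop renorm}: since the harmonic extension of $u_{\vert_{V_n}}$ realizes the minimal $\E_{n+1}$-energy among all extensions while keeping the energy unchanged, $u_{\vert_{V_{n+1}}}$ can only carry larger $\E_{n+1}$-energy than $\E_n[u_{\vert_{V_n}}]$.

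Four of the axioms are then comparatively short. For (R1), the triangle inequality for each seminorm $\E_n^{1/2}$ passes to the limit, so $\F$ is a linear space containing the constants (which have zero energy), and $\E$ is a nonnegative symmetric bilinear form by polarization; if $\E[u]=0$ then by monotonicity $\E_n[u_{\vert_{V_n}}]=0$ for every $n$, which forces $u$ to be constant on each connected $V_n$, hence on $V_\ast$, and by Lemma~\ref{lem Dast in cont} on $\Ka$. Axiom (R4) is immediate from the displayed estimate, which yields $R(x,y)\le\tilde C^2\abs{x-y}^{2l_\alpha}<\infty$; that $R$ is genuinely a metric follows from the standard variational argument for effective resistances, together with (R3). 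For (R3) it suffices to exhibit, for $x\neq y$, a function in $\F$ separating them, which is exactly the cut-off function with $u(x)=1$, $u(y)=0$ constructed in the proof of Proposition~\ref{prop RcompE}. Finally, for (R5) one checks that the unit contraction commutes with restriction, $\overline{u}_{\vert_{V_n}}=\overline{u_{\vert_{V_n}}}$, and reduces the energy at every level: $(\overline u(x)-\overline u(y))^2\le(u(x)-u(y))^2$ for the discrete part and $\abs{\nabla\overline u}\le\abs{\nabla u}$ a.e.\ on each segment for the continuous part, so $\E_n[\overline u_{\vert_{V_n}}]\le\E_n[u_{\vert_{V_n}}]$ and hence $\E[\overline u]\le\E[u]<\infty$, giving $\overline u\in\F$.

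The substantial work, and the main obstacle, is the completeness required in (R2). Given a sequence $(u^{(m)})_m$ that is Cauchy for the metric $\E^{1/2}[\,\cdot\,-\,\cdot\,]$ on $\F_{/\sim}$, I would first use the quotient freedom to normalize $u^{(m)}(p_1)=0$. The H\"older estimate then upgrades the energy-Cauchy property to a uniform-Cauchy property, since $\norm{u^{(m)}-u^{(k)}}_\infty\le\tilde C\,\E^{1/2}[u^{(m)}-u^{(k)}]\,(\diam\Ka)^{l_\alpha}$, so $u^{(m)}\to u$ uniformly on $\Ka$ for some continuous $u$. To see that $u\in\F$, I fix a level $n$ and use that $\E_n$ is lower semicontinuous with respect to uniform convergence: its discrete part is continuous in the nodal values, while its continuous part is a finite sum of $H^1$-Dirichlet integrals over the segments of $J_n$, each weakly lower semicontinuous under the (bounded, hence weakly $H^1$-convergent after passing to a subsequence) restrictions. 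This gives $\E_n[u_{\vert_{V_n}}]\le\liminf_m\E_n[u^{(m)}_{\vert_{V_n}}]\le\sup_m\E[u^{(m)}]<\infty$; letting $n\to\infty$ shows $u\in\F$.

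The genuinely delicate point is to promote this to convergence in energy, $\E[u^{(m)}-u]\to0$, rather than mere membership $u\in\F$. Here I would apply the same finite-level lower semicontinuity to the differences: for fixed $n$ and $m$, $\E_n[(u^{(m)}-u)_{\vert_{V_n}}]\le\liminf_{k}\E_n[(u^{(m)}-u^{(k)})_{\vert_{V_n}}]\le\liminf_k\E[u^{(m)}-u^{(k)}]$, and the Cauchy property makes the right-hand side smaller than any prescribed $\varepsilon$ for $m$ large, uniformly in $n$; taking $n\to\infty$ and then $m$ large yields $\E[u^{(m)}-u]\to0$. This interchange of the supremum over $n$ with the $\liminf$ in $k$, controlled by the uniform Cauchy bound, is the crux and the step I expect to require the most care. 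Alternatively one could invoke Kigami's general theorem that a compatible, harmonic-extension-invariant sequence of forms induces a resistance form on the completion, but the direct verification above keeps the argument self-contained.
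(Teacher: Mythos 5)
Your proof is correct, but it takes a genuinely different route from the paper. The paper does not verify the axioms (R1)--(R5) directly: it discretizes each segment of $\J_n$ by finite sets $D^e_n$, forms the finite vertex sets $\widetilde{V}_n$, takes the trace forms $\widetilde{\E}_n[u]=\inf\{\E_n[v]\mid v\in\D_n,\ v_{\vert_{\widetilde{V}_n}}\equiv u\}$, checks via the identity $\tfrac{5}{3}r_n^d+r_n^c=1$ (equivalently $\tfrac{5}{3}\rho_n^d+\rho_n^c=\rho_{n-1}^d$) that each $(\widetilde{\E}_n,\ell(\widetilde{V}_n))$ is a resistance form on a finite set and that the sequence is compatible, and then invokes Kigami's limit theorem \cite{Kig12} (together with Lemma~\ref{lem Dast in cont} and Proposition~\ref{prop RcompE}) to obtain $(\E,\F)$ --- essentially the alternative you mention in your last sentence. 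Your direct verification buys self-containedness and avoids the auxiliary discretization of the quantum-graph edges, at the price of carrying out the completeness axiom (R2) by hand; you correctly identify this as the crux, and your mechanism is sound: monotonicity of $n\mapsto\E_n[u_{\vert_{V_n}}]$ (which does follow from Propositions~\ref{prop: harmExtLeveln} and~\ref{prop renorm}, since the harmonic extension minimizes $\E_{n+1}$ while preserving the energy), the H\"older bound upgrading energy-Cauchy to uniform-Cauchy after normalizing at $p_1$, lower semicontinuity of each fixed-level form $\E_n$ (continuity of the discrete part in the nodal values, weak $H^1$ lower semicontinuity on each segment), and the final interchange $\E[u^{(m)}-u]=\sup_n\E_n[(u^{(m)}-u)_{\vert_{V_n}}]\le\liminf_k\E[u^{(m)}-u^{(k)}]$, which is legitimate because the Cauchy bound on the right is uniform in $n$. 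What the paper's route buys in exchange is that the hard analytic content (completeness and the identification of the limit form) is packaged once and for all in the general theory of compatible sequences, while the only computation specific to the Hanoi attractor is the resistance identity above; your route instead makes explicit where each structural ingredient (harmonic-extension invariance, the H\"older estimate) enters. Both arguments are valid proofs of the theorem.
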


\begin{proof}
First note that any interval $(a_e,b_e)$, $e\in\J_n$ contains a countable dense set $D^e$ that can be approximated by finite sets $D^e_n$ (think of approximating rational points on the interval). For each $n\geq 1$, we define the finite sets 
\[
\widetilde{V}_n:=W_n\cup \bigcup_{e\in\J_n}D^e_n.
\]
Our first step in the proof is the construction of a compatible sequence of resistance forms $(\widetilde{\E}_n,\ell(\widetilde{V}_n))$, where 
\begin{equation*}
\widetilde{\E}_n[u]:=\inf\{\E_n[v]~\vert~v\in\D_n\text{ and }v_{\vert_{\widetilde{V}_n}}\equiv u\}.
\end{equation*}
In order to show that $(\widetilde{\E}_n,\ell(\widetilde{V}_n))$ is a resistance form, we follow the lines of~\cite{BCF+07}. Using the proof of Proposition~\ref{prop renorm}, we have that
\begin{equation*}
\frac{5}{3}r_n^d+r_n^c=1\qquad\forall\,n\geq 1,
\end{equation*}
which implies the equality
\begin{equation*}
\frac{5}{3}\rho_n^d+\rho_n^c=\prod_{i=1}^{n-1}r_i^d\left(\frac{5}{3}r_n^d+r_n^c\right)=\prod_{i=1}^{n-1}r_i^d=\rho_{n-1}^d
\end{equation*}
and hence $(\widetilde{\E}_n,\ell(\widetilde{V}_n))$ is a resistance form.

On the other hand, we have that for any $n\geq 1$ and $u\in\ell(\widetilde{V_n})$
\begin{align*}
\widetilde{\E}_n[u]&=\inf\{\E_n[v]~\vert~v\in\D_n\text{ and }v_{\vert_{\widetilde{V}_n}}\equiv u\}\\
&=\inf\{\E_{n+1}[\tilde{v}]~\vert~\tilde{v}\in\D_{n+1}\text{ and }\tilde{v}_{\vert_{V_n}}\equiv v\}\\
&=\inf\{\E_{n+1}[\tilde{v}]~\vert~\tilde{v}\in\D_{n+1}\text{ and }\tilde{v}_{\vert_{V_n}}\equiv u\}\\
&=\inf\{\widetilde{\E}_{n+1}[\widetilde{u}]~\vert~\widetilde{u}\in\ell(\widetilde{V}_n)\text{ and }\widetilde{u}_{\vert_{\widetilde{V}_n}}\equiv u\},
\end{align*}
and therefore a compatible sequence of resistance forms.

Finally, if we define 
\begin{align*}
\widetilde{\F}&:=\{u\in\ell(\widetilde{V}_\ast)~\vert~\lim\limits_{n\to\infty}\widetilde{\E}_n[u_{\vert_{\widetilde{V}_n}}]<\infty\},\\
\widetilde{\D}&:=\{u\in\D_\ast~\vert~\lim_{n\to\infty}\E_n[u_{\vert_{V_n}}]<\infty\},
\end{align*}
applying Lemma~\ref{lem Dast in cont} and Proposition~\ref{prop RcompE}, it follows from~\cite[Theorem 3.13]{Kig12} that
\begin{align*}
\F&=\{u\colon\Ka\to\R~\vert~u_{\vert_{V_\ast}}\in\widetilde{\D}\}=\{u\colon \Ka\to\R~\vert~u_{\vert_{\widetilde{V}_n}}\in\widetilde{\F}\}\\
\E[u]&=\lim_{n\to\infty}\E_n[u_{\vert_{V_n}}]=\lim_{n\to\infty}\widetilde{\E}_n[u_{\vert_{\widetilde{V}_n}}]
\end{align*} 
is a resistance form on $\Ka$. Moreover, $\F\subset C(\Ka)$.
\end{proof}

\begin{cor}\label{cor: EKa is regular}
The resistance form $(\E,\F)$ is regular.
\end{cor}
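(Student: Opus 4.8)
The plan is to establish regularity in Kigami's sense, i.e. to show that $\F$ is dense in $(C(\Ka),\norm{\cdot}_\infty)$. This is the correct target because $\Ka$ is compact and, by Proposition~\ref{prop RcompE}, the resistance metric $R$ and the Euclidean metric induce the same topology, so $C(\Ka)$ is unambiguous and $C_0(\Ka)=C(\Ka)$. Concretely, I would fix $f\in C(\Ka)$ and $\varepsilon>0$ and construct an explicit $g\in\F$ with $\norm{f-g}_\infty$ small, rather than invoke Stone--Weierstrass (which would force me to check that $\F$ is an algebra).

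The construction rests on the decomposition $\Ka=\bigcup_{w\in\An}G_w(\Ka)\cup J_n$ into the $3^n$ small copies and the connecting segments (the degenerate maps $G_4,G_5,G_6$ produce precisely the segments $e_1,e_2,e_3$, and one iterates). First I would choose $n$ so large that the oscillation of $f$ on every $n$-cell $G_w(\Ka)$, $w\in\An$, is below $\varepsilon$; this is possible since $f$ is uniformly continuous and $\diam G_w(\Ka)=\left(\frac{1-\alpha}{2}\right)^n\diam\Ka\to0$. I then define $g$ on $V_n$ by setting $g=f$ on $W_n$ and, on each of the finitely many segments $e\in\J_n$, letting $g|_e$ be the piecewise linear interpolation of $f$ on a sufficiently fine mesh of $e$ with $g(a_e)=f(a_e)$ and $g(b_e)=f(b_e)$, so that $\norm{f-g}_{\infty,e}<\varepsilon$ and $g|_e\in H^1(e,dx)$. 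Finally I extend $g$ to all of $\Ka$ as the harmonic extension of $g|_{V_n}$, which by Proposition~\ref{prop: harmExtLeveln} only alters the interiors of the $n$-cells.

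Membership $g\in\F$ is then immediate: one has $g|_{V_n}\in\D_n$ (the discrete part of $E_n$ is a finite sum and each $g|_e$ lies in $H^1(e,dx)$), and since $g$ is the harmonic extension of $g|_{V_n}$, invariance under harmonic extension (Proposition~\ref{prop renorm}) gives $\E_m[g|_{V_m}]=\E_n[g|_{V_n}]<\infty$ for all $m\geq n$; hence $\lim_m\E_m[g|_{V_m}]<\infty$, so $g|_{V_\ast}\in\D_\ast$ and, being continuous, $g\in\F$. For the sup-norm estimate I would exploit that the extension formula in Proposition~\ref{prop: harmExtLeveln} is a convex combination: the coefficients $\frac{2+3d_n}{5+3d_n},\frac{2}{5+3d_n},\frac{1}{5+3d_n}$ are nonnegative and sum to $1$. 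Iterating, every value of $g$ inside an $n$-cell is a convex combination of the three vertex values $f(G_w(p_i))$, so a maximum principle holds and $g$ stays within the range of $f$ on $G_w(V_0)$. Combined with the oscillation bound this yields $\abs{f-g}<2\varepsilon$ on each $n$-cell, while $\abs{f-g}<\varepsilon$ on the segments by construction; thus $\norm{f-g}_\infty\leq 2\varepsilon$, giving density.

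I expect the only genuine subtlety to be the long, low-level segments (for instance the three components of $J_1$, which have the fixed length $\alpha$): these are never subdivided by the cell refinement, so naive linear interpolation on them would fail to track $f$, and it is essential to approximate $f|_e$ directly by a fine $H^1$ function on each such $e$ --- exactly the freedom that the definition of $\D_n$ permits. The other point requiring care is verifying the convex-combination (maximum principle) property of the harmonic extension so as to control $\norm{f-g}_\infty$ on the shrinking cells, but this follows directly from the explicit formula already established in Proposition~\ref{prop: harmExtLeveln}.
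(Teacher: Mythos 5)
Your proof is correct, but it takes a genuinely different and more self-contained route than the paper. The paper disposes of the corollary in one line: by Proposition~\ref{prop RcompE} the resistance metric and the Euclidean metric induce the same topology, so $\Ka$ is $R$-compact, and regularity then follows from the general fact that a resistance form on an $R$-compact space is automatically regular (\cite[Corollary 6.4]{Kig12}). You instead prove density of $\F$ in $(C(\Ka),\norm{\cdot}_\infty)$ by hand: match $f$ on $W_n$, approximate $f$ by a fine piecewise-linear $H^1$ function on each of the finitely many segments of $\J_n$, and harmonically extend into the $n$-cells, using the fact that the coefficients in Proposition~\ref{prop: harmExtLeveln} are nonnegative and sum to one to obtain a maximum principle and hence the sup-norm bound $2\varepsilon$ on the shrinking cells; invariance under harmonic extension (Proposition~\ref{prop renorm}) and Lemma~\ref{lem Dast in cont} then give $g\in\F$. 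The two subtleties you flag --- the low-level segments of fixed length $\alpha$, which are never subdivided by the cell refinement and must therefore be meshed directly, and the convex-combination property of the extension --- are exactly the right ones, and both are handled correctly. What the paper's argument buys is brevity and generality (it applies to any resistance form once $R$-compactness is established); what yours buys is an explicit approximating function and independence from the external reference, making it in effect a concrete instance of the mechanism behind Kigami's general result.
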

\begin{proof}
In view of Proposition~\ref{prop RcompE}, $\Ka$ is $R$-compact, hence $(\E,\F)$ is regular by~\cite[Corollary 6.4]{Kig12}.
\end{proof}

We finish this paragraph with a scaling result for $(\E,\D)$.

\begin{lemma}
Let $u_i:=u\circ G_i$ for any $u\in\F$. Then
\begin{align*}
\E[u]&=\sum_{i=1}^3\left(\frac{5}{3}\E^d[u_i]+\frac{5}{3}\left(\frac{1-\alpha}{2}\right)^{2}\E^c[u_i]+\left(\frac{1-\alpha}{2}\right)\widetilde{\E}^c[u_i]\right)+\E_1^c[u],
\end{align*}
where 
\[
\E^d[u]:=\lim_{n\to\infty}\frac{1}{\rho_n^d}E_n^d[u_{\vert_{V_n}}],\qquad
\E^c[u]:=\lim_{n\to\infty}\sum_{k=1}^n\frac{1}{\rho_k^c}E^c_{k^{-}}[u_{\vert_{V_n}}],
\]
and
\[
\widetilde{\E}^c[u]:=\lim_{n\to\infty}\sum_{k=1}^n\frac{1}{\rho_n^d}E^c_{k^{-}}[u_{\vert_{V_n}}].
\]
\end{lemma}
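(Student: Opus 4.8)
The plan is to exploit the one-step cellular decomposition of the attractor. Writing a word $w\in\A^n$ as $w=iw'$ with $i\in\A$, $w'\in\A^{n-1}$ and using $G_w=G_i\circ G_{w'}$, one gets $W_n=\bigcup_{i=1}^3 G_i(W_{n-1})$ and $J_n=J_1\cup\bigcup_{i=1}^3 G_i(J_{n-1})$, hence $V_n=J_1\cup\bigcup_i G_i(V_{n-1})$. I would first record the induced splitting of the approximating energies, writing $\theta:=\frac{1-\alpha}{2}$. Since two points are $n$-neighbours only inside a single $n$-cell and each $G_i$ acts on the relevant triangle as a similitude of ratio $\theta$, the discrete part splits as $E_n^d[u]=\sum_{i=1}^3 E_{n-1}^d[u_i]$. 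For the continuous part the key elementary fact is $G_i\circ\varphi_e=\varphi_{G_i(e)}$, which gives $(u\circ\varphi_{G_i(e)})'=(u_i\circ\varphi_e)'$; because $E^c_{k^-}$ is defined through the scale-free unit-interval parametrization, this yields $E^c_{k^-}[u]=\sum_{i=1}^3 E^c_{(k-1)^-}[u_i]$ for $k\ge 2$, leaving the new level-one segments $E^c_{1^-}[u]$ over.

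Next I would substitute these splittings into $\E_n[u]=\frac{1}{\rho_n^d}E_n^d[u]+\sum_{k=1}^n\frac{1}{\rho_k^c}E^c_{k^-}[u]$, reindex, and pass to the limit. The whole computation is driven by the single scaling relation $d_{n+1}=\theta d_n$ together with $\rho_n^d=\rho_{n-1}^d r_n^d$ and $\rho_n^c=d_n\rho_n^d$ (the latter from $r_n^c/r_n^d=d_n$). For the discrete term this is clean: $\frac{1}{\rho_n^d}E_{n-1}^d[u_i]=\frac{1}{r_n^d}\cdot\frac{1}{\rho_{n-1}^d}E_{n-1}^d[u_i]$, and since $d_n\to 0$ forces $r_n^d\to\tfrac{3}{5}$, letting $n\to\infty$ gives exactly $\frac{5}{3}\sum_i\E^d[u_i]$. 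For the continuous term I would rewrite the shifted factor $\frac{1}{\rho_{m+1}^c}$ as a combination of $\frac{1}{\rho_m^c}$ and $\frac{1}{\rho_m^d}$ via $\rho_{m+1}^c=\rho_m^d r_{m+1}^c$ and $d_{m+1}=\theta d_m$; the first contribution reassembles $\E^c[u_i]$ and the second, after accounting for the segment length-scaling under $G_i$, reassembles $\widetilde{\E}^c[u_i]$, while the level-one leftover produces $\E_1^c[u]$. The coefficients $\tfrac53$, $\tfrac53\theta^2$ and $\theta$ are produced by combining the limiting value of $1/r_n^d$ with the powers of $\theta$ coming from the segment lengths; keeping track of these factors uniformly along the entire sum is exactly where the care is required. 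Throughout I would justify interchanging the finite sum over $i$ with the $n$-limit and the $m$- and $n$-limits with each other: all summands are nonnegative, each subcell functional is dominated by $\E[u]<\infty$ as a piece of the monotone sequence defining $\E[u]$, and $u_i=u\circ G_i\in\F$, so $\E^d[u_i],\E^c[u_i],\widetilde{\E}^c[u_i]$ are finite.

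The main obstacle is precisely the continuous part. In the self-similar gasket the renormalization is a single constant, but here the factors carry the level-dependent $d_n$, and under the cell map $G_i$ the whole sequence $(d_n)$ is shifted to $(d_{n+1})$. Consequently the pulled-back continuous energy cannot be recovered from a single rescaling of $\E^c$: the mismatch between the true subcell sequence $d_{n+1}$ and the ambient sequence $d_n$ is exactly what the auxiliary functional $\widetilde{\E}^c$ measures, and the delicate point is to control this shift along the whole sum rather than in one isolated term as in the discrete case. Verifying that the two sums produced by the decomposition of $\frac{1}{\rho_{m+1}^c}$ converge to the intended functionals with the stated powers of $\theta$ is the heart of the proof.
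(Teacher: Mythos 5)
Your proposal follows essentially the same route as the paper: decompose the level-$n$ energies over the three first-level cells plus the leftover $J_1$, pull back via $G_i$, and use the relations $\rho_n^d=\rho_{n-1}^d r_n^d$, $d_{k}/\rho_k^c=1/\rho_k^d$ and $d_{k+1}=\frac{1-\alpha}{2}d_k$ to split the shifted continuous renormalization factor into a piece reassembling $\E^c[u_i]$ and a piece reassembling $\widetilde{\E}^c[u_i]$, with $r_n^d\to 3/5$ producing the factor $5/3$ in the limit. This matches the paper's argument step for step (including the identification of $\widetilde{\E}^c$ as the correction coming from the level shift of $(d_n)$), so the approach is correct and not materially different.
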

\begin{proof}
First note that $\rho_n^c<\rho_n^d$ and thus $\tilde{\E}^c$ is finite for any $u\in\F$.

On the one hand,
\[
\E^d_{n+1}[u]=\frac{\rho^d_n}{\rho^d_{n+1}}\sum_{i=1}^3\E_n^d[u_i]=\frac{5+3d_{n+1}}{3}\sum_{i=1}^3\E_n^d[u_i].
\]
Letting $n\to\infty$ in both sides of the equality we get
\begin{equation}\label{eq lemma scaling}
\E^d[u]=\frac{5}{3}\sum_{i=1}^3\E[u_i].
\end{equation}
On the other hand
\begin{align*}
\E_{n+1}^c[u]&=\sum_{k=1}^{n+1}\frac{1}{\rho_k^c}\int_{J_k\setminus J_{k-1}}\abs{\nabla u}^2dx\\
&=\frac{1}{\rho_1^c}\int_{J_1}\abs{\nabla u}^2dx+\sum_{i=1}^3\sum_{k=1}^n\frac{1}{\rho^c_{k+1}}\int_{G_i(J_k\setminus J_{k-1})}\abs{\nabla u}^2dx\\
&=\E_1^c[u]+\sum_{i=1}^3\sum_{k=1}^n\frac{1}{\rho_{k+1}}\left(\frac{2}{1-\alpha}\right)\int_{J_k\setminus J_{k-1}}\abs{\nabla u_i}^2dx\\
&=\E_1^c[u]+\left(\frac{2}{1-\alpha}\right)\sum_{i=1}^3\sum_{k=1}^n\frac{\rho_k^c}{\rho_{k+1}^c}\frac{1}{\rho_k^c}E^c_{k^{-}}[u_i]\\
&=\E_1^c[u]+\left(\frac{2}{1-\alpha}\right)\sum_{i=1}^3\sum_{k=1}^n\frac{5+3d_{k+1}}{3}\frac{2}{1-\alpha}\frac{1}{\rho_k^c}E^c_{k^{-}}[u_i].
\end{align*}
Splitting $\frac{5+d_{k+1}}{3}$ into its two summands and since $\frac{d_{k}}{\rho_k^c}=\frac{1}{\rho_k^d}$ we get that
\begin{align*}
\E_{n+1}^c[u]&=\E_1^c[u]+\sum_{i=1}^3\left(\frac{5}{3}\left(\frac{2}{1-\alpha}\right)^2\E_n^c[u_i]+\left(\frac{2}{1-\alpha}\right)\sum_{k=1}^n\frac{1}{\rho_k^d}E^c_{k^{-}}[u_i]\right).
\end{align*}
Letting $n\to\infty$ in both sides of the equality we get
\begin{equation*}
\E^c[u]=\sum_{i=1}^3\left(\frac{5}{3}\left(\frac{1-\alpha}{2}\right)^{2}\E^c[u_i]+\frac{1-\alpha}{2}\widetilde{\E}^c[u_i]\right)+\frac{1}{\rho_1^c}\E_1^c[u]
\end{equation*}
which together with~\eqref{eq lemma scaling} proves the assertion.
\end{proof}

By iterating the calculations in the previous proof we get the following scaling for an arbitrary level $m$:
\begin{cor}\label{cor scaling prop EKa}
For each $m\in\N$, $w\in\A^m$ and $u\in\F$ it holds that
\begin{align*}
\E[u]&=\left(\frac{5}{3}\right)^m\sum_{w\in\Am}\left(\E^d[u_w]+\left(\frac{1-\alpha}{2}\right)^{2m}\E^c[u_w]\right)\\
&+\left(\frac{1-\alpha}{2}\right)^m\sum_{w\in\Am}\widetilde{\E}_m^c[u_w]+\sum_{k=1}^{m-1}\sum_{w\in\A^{k-1}}\left(\frac{2}{1-\alpha}\right)^k\frac{1}{\rho_k^c}E_1^c[u_w],
\end{align*}
where $u_w:=u\circ G_w$, and 
\[
\widetilde{\E}_m^c[u]:=\lim_{n\to\infty}\sum_{k=1}^n P_{k,m}\left(\frac{1}{3},5,\left(\frac{1-\alpha}{2}\right)^k\right))E^c_{k^{-}}[u]
\]
for some polynomial $P_{k,m}$ of degree $m$.
\end{cor}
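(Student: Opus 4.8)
The plan is to argue by induction on the level $m$, taking the previous lemma as the base case $m=1$ and iterating the one-step scaling identities established in its proof. Recall that that proof produced, separately, the relation $\E^d[u]=\frac{5}{3}\sum_{i=1}^3\E^d[u_i]$ for the discrete part and
\[
\E^c[u]=\frac{5}{3}\left(\frac{1-\alpha}{2}\right)^2\sum_{i=1}^3\E^c[u_i]+\frac{1-\alpha}{2}\sum_{i=1}^3\widetilde{\E}^c[u_i]+\frac{1}{\rho_1^c}E_1^c[u]
\]
for the continuous part, together with a companion relation governing the auxiliary quantity $\widetilde{\E}^c$. Since $\E=\E^d+\E^c$, iterating these relations $m$ times and regrouping the resulting triple sum by the length of the word $w$ will produce the four groups of terms in the assertion.

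The two geometrically clean pieces are handled first. The discrete relation carries no remainder and no mixing, so it iterates immediately to $\E^d[u]=(5/3)^m\sum_{w\in\Am}\E^d[u_w]$, the first half of the first group. Likewise, the ``pure'' continuous term, the one carrying the factor $\frac{5}{3}(\frac{1-\alpha}{2})^2$, reproduces itself at each step with the same multiplicative constant while the summation index is refined from $\A^k$ to $\A^{k+1}$; after $m$ iterations it contributes $(5/3)^m(\frac{1-\alpha}{2})^{2m}\sum_{w\in\Am}\E^c[u_w]$, i.e. the second group. The decisive bookkeeping device throughout is the arc-length rescaling identity $\int_{G_i(e)}\abs{\nabla u}^2\,dx=\frac{2}{1-\alpha}\int_e\abs{\nabla u_i}^2\,dx$, valid because each $G_i$ ($i\in\A$) contracts by the ratio $\frac{1-\alpha}{2}$; this is precisely the factor $\frac{2}{1-\alpha}$ that appeared once per level in the lemma and that will appear $k$ times for a word of length $k$.

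The remaining two groups collect the cross terms. At each iteration the continuous relation feeds a copy of $\widetilde{\E}^c$ (weighted by $\frac{1-\alpha}{2}$) and a copy of the top-cell boundary energy $E_1^c$ (weighted by $\frac{1}{\rho_1^c}$), and both are then carried down through the subsequent iterations, each descent multiplying the continuous integrals by $\frac{2}{1-\alpha}$ and refining the renormalization factor. Tracking the $\widetilde{\E}^c$ contributions over all $m$ levels and absorbing the accumulated weights into a single auxiliary form yields the third group $(\frac{1-\alpha}{2})^m\sum_{w\in\Am}\widetilde{\E}_m^c[u_w]$; the polynomial $P_{k,m}$ appearing in $\widetilde{\E}_m^c$ is exactly the record of how many times, and with which combination of the constants $\tfrac13$, $5$ and $(\frac{1-\alpha}{2})^k$, a level-$k$ continuous integral is revisited during the $m$-fold iteration, and its degree $m$ reflects the $m$ descents. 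The boundary energies $E_1^c[u_w]$, indexed by the word $w$ at which the cell was split off, accumulate into the fourth group with weight $(\frac{2}{1-\alpha})^k\frac{1}{\rho_k^c}$ for $\abs{w}=k-1$.

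I expect the main obstacle to be the precise identification of the polynomial $P_{k,m}$ together with the verification that the auxiliary limit defining $\widetilde{\E}_m^c$ is finite; the latter follows from the comparison $\rho_n^c<\rho_n^d$ noted in the previous lemma, which bounds every $\widetilde{\E}^c$-type quantity by a finite multiple of $\E^c[u]<\infty$ for $u\in\F$. The rest is careful index bookkeeping: one must keep the word-length index, the level index $k$ and the outer limit index $n$ distinct while passing to the limit, and justify that the regrouping of the triple sum into the stated four groups is legitimate. Setting up the recursion for $P_{k,m}$ explicitly and closing the induction on $m$ for it would constitute the technical heart of a fully detailed proof.
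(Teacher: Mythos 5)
Your proposal is correct and follows essentially the same route as the paper, which proves the corollary simply by iterating the one-step scaling identities of the preceding lemma ($\E^d[u]=\tfrac{5}{3}\sum_i\E^d[u_i]$ and the continuous relation with its $\widetilde{\E}^c$ and $E_1^c$ remainders) and regrouping by word length. In fact the paper gives no more detail than you do, so your identification of the arc-length rescaling factor $\tfrac{2}{1-\alpha}$ and of the bookkeeping behind $P_{k,m}$ already matches, and in places exceeds, the level of justification in the original.
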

The same results hold for the bilinear from $\E(u,v)$, $u,v\in\F$.
\subsubsection{Dirichlet form}
In order to obtain a Dirichlet form from the resistance form, we need a locally finite regular measure $\mu$ on $\Ka$. Due to the non self-similarity of $\Ka$, there is no ``canonical'' choice of such measure. Hence we will not specify it until the next section, when it becomes necessary for the study of the associated Laplacian.

\medskip

Let $\mu$ be an arbitrary finite Radon measure on $\Ka$ and let $L^2(\Ka,\mu)$ be the associated Hilbert space. From Lemma~\ref{lem Dast in cont} it follows that $\F\subseteq L^2(\Ka,\mu)$ so we can define
\begin{equation}\label{eq: Def EKa1}
\E_{(1)}(u,v):=\E(u,v)+\int_{\Ka}u\cdot v\,d\mu\qquad u,v\in\F.
\end{equation}

By~\cite[Theorem 2.4.1]{Kigami01} this is an inner product in $\F$ and thus we can consider the norm $\norm{\cdot}_{\E_{(1)}}:=\E_{(1)}^{1/2}$.

\medskip

Let $C_0(\Ka)$ denote the set of compactly supported continuous functions in $\Ka$ ( in fact $C(\Ka)$) and let $\D$ be the closure of $C_0(\Ka)\cap\F$ with respect to the norm $\norm{\cdot}_{\E_{(1)}}$. On the one hand, it follows from Corollary~\ref{cor: EKa is regular} that $\D$ is dense in $C(\Ka)$. On the other hand, it is a well known result from classical analysis that $C_0(\Ka)$ is dense in $L^2(\Ka,\mu)$. Thus $\D$ is dense in $L^2(\Ka,\mu)$ too and the pair $(\E,\D)$ is called the Dirichlet form \textit{induced by the resistance form} $(\E, \F)$. Moreover, since $\Ka$ is $R$-compact by Proposition~\ref{prop RcompE}, $\D=\F$.

\medskip

\begin{thm}\label{thm: EKa loc reg DF}
The Dirichlet form $(\E,\D)$ on $L^2(\Ka,\mu)$ is local and regular.
\end{thm}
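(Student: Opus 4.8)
The plan is to deduce both properties from results already established, handling regularity almost for free and reserving the real work for locality. For regularity I would use that $\D$ was \emph{defined} as the $\norm{\cdot}_{\E_{(1)}}$-closure of $C_0(\Ka)\cap\F$, so $\D\cap C_0(\Ka)$ is by construction dense in $\D$ for the form norm; the only thing left is uniform density of $\D\cap C_0(\Ka)$ in $C_0(\Ka)=C(\Ka)$ (here $\Ka$ is compact). Since $\Ka$ is $R$-compact by Proposition~\ref{prop RcompE} we have $\D=\F$, and Corollary~\ref{cor: EKa is regular} (regularity of the resistance form) says precisely that $\F\cap C(\Ka)$ is uniformly dense in $C(\Ka)$. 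The two density statements together are exactly the definition of a regular Dirichlet form; alternatively one may quote the general transfer result of~\cite{Kig12}.

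For locality it is enough to show $\E(u,v)=0$ whenever $u,v\in\D$ have disjoint supports, and I would in fact prove strong locality. Letting $n\to\infty$ in~\eqref{def ren E_n} gives $\E=\E^d+\E^c$ with $\E^d[u]=\lim_n(\rho_n^d)^{-1}E_n^d[u_{\vert_{V_n}}]$ and $\E^c[u]=\sum_{k\geq1}(\rho_k^c)^{-1}E^c_{k^-}[u]$, and I would treat the two parts separately. For the continuous part, observe that on each segment $e\in\J_k$ the set where $(u\circ\varphi_e)'\neq0$ lies (up to a null set) in $\supp u$ and likewise for $v$; disjointness of the supports makes these sets disjoint, so $(u\circ\varphi_e)'(v\circ\varphi_e)'=0$ a.e.\ and every bilinear term $E^c_{k^-}(u,v)$ vanishes, giving $\E^c(u,v)=0$.

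For the discrete part I would set $\delta:=\operatorname{dist}(\supp u,\supp v)$, which is strictly positive because $\supp u$ and $\supp v$ are disjoint compact subsets of the compact set $\Ka$. Fix any $n$ with $(\tfrac{1-\alpha}{2})^n<\delta$. A pair $x\stackrel{n}{\sim}y$ can contribute a nonzero term $(u(x)-u(y))(v(x)-v(y))$ to $E_n^d(u,v)$ only if $u$ and $v$ are both non-constant on $\{x,y\}$; then at least one endpoint lies in $\supp u$ and at least one in $\supp v$, and since the supports are disjoint these must be the two distinct endpoints, forcing $\abs{x-y}\geq\delta$. This contradicts $\abs{x-y}=(\tfrac{1-\alpha}{2})^n<\delta$, so $E_n^d(u,v)=0$ for all large $n$ and hence $\E^d(u,v)=0$. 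Combining, $\E(u,v)=\E^d(u,v)+\E^c(u,v)=0$.

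The main obstacle is precisely the locality of the discrete part: a sum of squared differences over neighbouring vertices is a prototype of a \emph{nonlocal} (jump) form, and locality is recovered only in the limit because the $n$-neighbour relation couples points whose Euclidean distance $(\tfrac{1-\alpha}{2})^n$ tends to $0$. It is this mesh-shrinking, together with the positive separation $\delta$ granted by compactness of $\Ka$, that rules out a jumping part in the Beurling--Deny decomposition. Replacing ``disjoint supports'' by ``$u$ constant on a neighbourhood of $\supp v$'' and rerunning the same two arguments yields strong locality, which is what guarantees that $(\E,\D)$ generates a diffusion.
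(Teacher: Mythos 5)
Your proof is correct. The regularity half is essentially the paper's argument: the paper simply cites Corollary~\ref{cor: EKa is regular} together with the transfer theorem \cite[Theorem 9.4]{Kig12}, which is the packaged version of the two density statements you spell out; unpacking them via $\D=\F$ and $C_0(\Ka)=C(\Ka)$ is fine and changes nothing of substance. For locality you take a genuinely different route. The paper fixes $n$ so large that every level-$n$ cell $G_w(\Ka)$ misses $\supp(u)$ or $\supp(v)$ and then invokes the scaling identity of Corollary~\ref{cor scaling prop EKa} to conclude $\E(u,v)=0$ cell by cell. You instead split $\E=\E^d+\E^c$ (a decomposition the paper itself uses in the scaling lemma, and which is legitimate since the continuous part is a monotone limit bounded by $\E[u]$, so both limits exist separately) and kill each piece directly: the continuous part by the a.e.\ vanishing of $(u\circ\varphi_e)'(v\circ\varphi_e)'$ on each segment, and the discrete part by the observation that a nonzero cross term forces one endpoint into each support, hence $\abs{x-y}\geq\operatorname{dist}(\supp u,\supp v)>0$, which is incompatible with $\abs{x-y}=(\tfrac{1-\alpha}{2})^n$ for large $n$, so $E_n^d(u,v)$ is \emph{exactly} zero eventually (crucial, since $(\rho_n^d)^{-1}\to\infty$). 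Your version is more elementary and self-contained -- it avoids the somewhat delicate bookkeeping of the junction terms $E_1^c[u_w]$ in the scaling corollary -- and as a bonus it visibly yields strong locality; the paper's version buys brevity by reusing machinery it has already built. Your closing remark correctly identifies why the ``jump-form-looking'' discrete sums do not produce a jumping part: the $n$-neighbour relation only couples points at distance $(\tfrac{1-\alpha}{2})^n\to 0$.
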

\begin{proof}
By Corollary~\ref{cor: EKa is regular} $(\E,\F)$ is a regular resistance form, hence by~\cite[Theorem 9.4]{Kig12} its associated Dirichlet form $(\E,\D)$ is a regular Dirichlet form.

If we consider $u,v\in\D$ such that $\supp(u)\cap\supp(v)=\emptyset$, since $\supp(u)$ and $\supp(v)$ are compact sets, there exists some $n\in\N$ such that for all $w\in\An$, either $\supp(u)\cap G_{w}(\Ka)=\emptyset$ or $\supp(v)\cap G_{w}(\Ka)=\emptyset$. By Corollary~\ref{cor scaling prop EKa} we get that $\E(u,v)=0$, hence the form is local.
%%% so need something instead of the corollary!!!
\end{proof}

\section{Measure and Laplacian on $\Ka$}
%%% this line cut
%Before introducing the Laplacian $\Delta_{\mua}$ associated to $(\EKa,\Da)$, we need to fix a locally finite and regular measure $\mua$ on $\Ka$ because the definition of $\Delta_{\mua}$ strongly depends on the choice of the measure on $\Ka$. 
%%%% new line
Since the properties of the Laplacian associated with the Dirichlet form $(\E,\D)$  strongly depend on the choice of the measure on $\Ka$, we need to fix one up to this point. The one constructed here has been chosen in this particular manner for technical reasons. We recover at this stage the parameter $\alpha$ in our discussion and write $(\EKa,\Da)$ for the Dirichlet form $(\E,\D)$, as well as all other dependencies.
\subsection{Measure on $\Ka$}
The following result gives a decomposition of $\Ka$ that will be very useful in the definition of the measure $\mu_{\alpha,\beta}$.
\begin{lemma}\label{lem: Ka=Fa cup Ja}
Let $\Fa$ be the unique nonempty compact subset of $\R^2$ satisfying $\Fa=\bigcup\limits_{i=1}^3\Gai(\Fa)$ and define $\Ja:=\bigcup\limits_{n\in\N_0}\Jan$. Then,
\[\Ka=\Fa\,\dot{\cup}\,\Ja.\]
\end{lemma}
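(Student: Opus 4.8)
The plan is to establish the two claims separately: that $\Ka = \Fa \cup \Ja$ and that this union is disjoint. The set $\Fa$ is the Sierpi\'nski gasket attractor of the three similitudes $\Ga{1},\Ga{2},\Ga{3}$ (with ratio $\frac{1-\alpha}{2}$), while $\Ja = \bigcup_{n\in\N_0}\Jan$ collects all the ``bridge'' segments $e_i$ introduced at every iteration level.

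\medskip

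\textbf{Inclusion $\Fa\cup\Ja\subseteq\Ka$.} First I would note that $\Fa\subseteq\Ka$: iterating the self-similar identity $\Fa=\bigcup_{i=1}^3\Gai(\Fa)$ produces, for each word $w\in\An$, the point sets $\Ga{w}(\{p_1,p_2,p_3\})=\Wan$, and since $\Wan\subseteq\Ka$ and $\Fa=\overline{\bigcup_n\Wan}$ with $\Ka$ closed, we get $\Fa\subseteq\Ka$. For the segments, each $e_i$ lies in $\Ka$ because the defining relation $\Ka=\bigcup_{i=1}^6\Gai(\Ka)$ includes the non-similitude maps $\Ga{4},\Ga{5},\Ga{6}$ whose images are precisely the bridge segments connecting the three main copies; applying $\Ga{w}$ for $w\in\A^m$ keeps these inside $\Ka$, so every $\Jan\subseteq\Ka$ and hence $\Ja\subseteq\Ka$.

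\medskip

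\textbf{Inclusion $\Ka\subseteq\Fa\cup\Ja$.} Here I would argue by an address/coding analysis. Every point of $\Ka$ has an infinite address over the alphabet $\{1,\dots,6\}$. If the address eventually uses only symbols from $\{1,2,3\}$ after some finite prefix $w$, the point lies in $\Ga{w}(\Fa)$, and such a point belongs to $\Fa$ (when the prefix is also in $\{1,2,3\}^*$) or to one of the segments $\Ga{v}(e_i)$ and hence to $\Ja$. If a symbol from $\{4,5,6\}$ appears, say the first such occurrence is at position $m+1$ after a prefix $v\in\A^m$, then the point lies in $\Ga{v}\Ga{j}(\Ka)$ for some $j\in\{4,5,6\}$; because $\Ga{j}$ for $j\in\{4,5,6\}$ is a degenerate (rank-one or line-collapsing) affine map sending $\Ka$ onto a single segment $e_i$, the point lands on $\Ga{v}(e_i)\subseteq\Jan$. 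The main obstacle is making this coding argument rigorous, since $\Ga{5}$ has rank one and $\Ga{4},\Ga{6}$ collapse $\Ka$ onto a segment: one must verify that $\Ga{j}(\Ka)=e_j$ (the closed segment) for $j\in\{4,5,6\}$, which follows by checking the images of the three anchor points $p_1,p_2,p_3$ and the linearity of the collapse.

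\medskip

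\textbf{Disjointness.} Finally I would show $\Fa\cap\Ja=\emptyset$. The key geometric fact is that the bridge segments $e_i$ are \emph{open} segments (endpoints removed, as noted after~\eqref{Def Jan}), and their endpoints $\Ga{j}(p_k),\Ga{k}(p_j)$ are exactly the boundary vertices where the gasket copies meet. Points of $\Fa$ inside a given $n$-cell $\Ga{w}(\Fa)$ are separated from the open bridges at that level by the gap of length $\alpha\left(\frac{1-\alpha}{2}\right)^{m}$ that the parameter $\alpha<1/3$ guarantees; the interiors of the segments never touch $\Fa$ because $\Fa$ is built solely from the contractive similitudes $\Ga{1},\Ga{2},\Ga{3}$ whose images avoid the bridge interiors at every scale. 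I would make this precise by a nested-cell argument: any point in a segment interior is bounded away from $\Wan$ for all large $n$, whereas $\Fa=\overline{\bigcup_n\Wan}$, so no segment-interior point can lie in $\Fa$. This yields $\Ka=\Fa\,\dot\cup\,\Ja$ as claimed.
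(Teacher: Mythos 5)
The paper itself offers no proof of this lemma---it only cites Lemma 2.1.1 of the first author's thesis---so there is no in-paper argument to measure yours against. Taken on its own terms, your outline is correct and follows the natural route: $\Fa\subseteq\Ka$ and $\Jan\subseteq\Ka$ give one inclusion; the reverse inclusion comes from the address map over $\{1,\dots,6\}$ together with the key observation that $\Ga{4},\Ga{5},\Ga{6}$ are rank-one and send all of $\Ka$ onto the closed bridge segments, so any address whose first symbol outside $\A$ occurs after a prefix $v\in\A^m$ lands in $\Ga{v}(\overline{e_i})$ (interior in $\Ja$, endpoints in $W_{\alpha,m+1}\subseteq\Fa$), while addresses entirely in $\A^{\infty}$ give exactly the points of $\Fa$. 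For the image claim, checking $\Ga{j}(p_1),\Ga{j}(p_2),\Ga{j}(p_3)$ is enough once you add that $\Ka$ is contained in the convex hull $T$ of $V_0$ (because $T$ is invariant under all six maps), so $\Ga{j}(\Ka)\subseteq\Ga{j}(T)=\overline{e_i}$.

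One sentence in your disjointness step is not literally true and should not be leaned on: there is no positive gap ``of length $\alpha\left(\frac{1-\alpha}{2}\right)^{m}$'' separating the points of $\Fa$ from the open bridges, since the endpoints $a_e,b_e$ of every bridge lie in $\Wan\subseteq\Fa$ and are limits of interior points of that bridge; the separation is pointwise, not uniform. The version you state afterwards is the one that works, and it needs the uniformity in $n$ made explicit: a fixed $x$ in the interior of a bridge $\Ga{w}(e_i)$, $w\in\A^{m}$, lies outside the compact set $\bigcup_{u\in\A^{m+1}}\Ga{u}(T)$ (distinct cells of the same level have disjoint hulls, and $e_i$ misses $\bigcup_{j\in\A}\Ga{j}(T)$), and this set contains $\Wan$ for \emph{every} $n$ because the $\Wan$ are increasing; hence $d\bigl(x,\overline{\bigcup_n\Wan}\bigr)=d(x,\Fa)>0$. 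Note also that $\alpha<1/3$ plays no role here---any $\alpha>0$ yields the disjointness. With these small repairs the argument is complete.
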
 
\begin{proof}
See~\cite[Lemma 2.1.1]{ARThesis13}
\end{proof}

Now, let $\lambda$ denote the $1-$dimensional Hausdorff measure and consider $\beta$ any positive number satisfying
\begin{equation}\label{eq: property of beta}
0<\beta<\left(\frac{2}{3(1-\alpha)}\right)^2.
\end{equation} 

On the one hand, we define the self-similar measure on $\Fa$ given by
\[\mu_{\alpha}^d(A):=\frac{1}{2\mathcal{H}^{\delta_\alpha}(F_{\alpha})}\mathcal{H}^{\delta_\alpha}_{\vert_{F_{\alpha}}}(A)\qquad\text{for } A\subseteq\R^2\text{  Borel},\]
where $\delta_\alpha:=\dim_H\Ka=\frac{\ln 3}{\ln 2-\ln (1-\alpha)}$ and $\mathcal{H}^{\delta_\alpha}(\cdot)$ denotes the $\delta_\alpha$-dimensional Hausdorff measure. 

\medskip

On the other hand, we define the Radon measure on $\R^2$ given by
\[
\mu_{\alpha,\beta}^c(A):=\frac{1}{2\tilde{\mu}_{\alpha,\beta}^c(\Ja)}\tilde{\mu}_{\alpha,\beta}^c(A)\qquad \text{for }A\subseteq\R^2\text{  Borel},
\]
where
\[\tilde{\mu}_{\alpha,\beta}^c(A):=\sum_{e\in\Jae}\beta_e\lambda(A\cap e ),\]
and $\beta_e:=\beta^k$ if $e\in\Jake{k+1}\setminus\Jake{k}$, $\beta$ being the constant chosen in~\eqref{eq: property of beta}. Note that $\supp (\muac)=\Ja$ and the choice of $\beta$ ensures that $\tilde{\mu}_{\alpha,\beta}^c(\Ja)<\infty$.

\medskip

In view of Lemma~\ref{lem: Ka=Fa cup Ja} we may define a finite Radon measure on $\R^2$ as the sum
\[\mu_{\alpha,\beta}(A):=\muad(A\cap\Fa)+\mu_{\alpha,\beta}^c(A\cap\Ja) \qquad\text{for } A\subseteq\R^2\text{  Borel}.\]
Note that $\supp (\mua)=\Ka$ and $\mua(\Ka)=1$.
%\begin{rem}\label{rem: properties mua}\mbox{}
%\begin{itemize}
%\item[(1)] 

%%%%%%%% this eq cut
%\begin{equation*}
%\tilde{\mu}_{\alpha}^c(\Ja)=3\alpha\sum_{k=0}^{\infty}\left(\frac{3\beta(1-\alpha)}{2}\right)^k<\infty.
%\end{equation*}
%\item[(2)] Let $\As:=\bigcup_{n\in\N}\An$ be the set of all words on the alphabet $\A$ of finite length. The measure $\mua$ is a Borel probability measure and belongs to the set
%\begin{equation*}
%\mathcal{M}^1(\Ka):=\left\{
%\mu\,\vert\;\small{\begin{split}\mu&\text{ is a probability measure on }\Ka,\;\mu(\{x\})=0\,\forall\,x\in\Ka,\\
%&\mu(\Ga{w}(\Ka))>0\text{ and }\mu(\Ga{w}(V_0))=0\text{ for any }w\in\As
%\end{split}}\right\}.
%\end{equation*}
%\item[(3)] It will follow directly from Lemma~\ref{lem: ellip scale} that $\mua$ is an \textit{elliptic} measure, i.e. there exists $\gamma\in (0,\infty)$ such that $\mua(\Ga{wi}(\Ka))\geq\gamma\mua(\Ga{w}(\Ka))$ for all $w\in\As$, $i\in\A$.
%\item[(4)] For each $w\in\As$, define $\mua^{w}:=\frac{1}{\mua(\Ga{w}(\Ka))}\mua\circ\Ga{w}$. We have that $\mua^{w}\in\mathcal{M}^1(\Ka)$ and for any Borel measurable function $u\colon\Ka\to\R$,
%\begin{align}\label{equ: int over muaw}
%\int_{\Ka}u\circ\Ga{w}\,d\mua^w(x)%&=\frac{1}{\mua(\Ga{w}(\Ka))}\int_{\Ka}u\circ\Ga{w} \,d\mua(\Ga{w}(x))\nonumber\\
%=\frac{1}{\mua(\Ga{w}(\Ka))}\int_{\Ga{w}(\Ka)}u\,d\mua.
%\end{align}
%\end{itemize}
%\end{rem}

\subsection{Laplacian}
It is known from the theory of Dirichlet forms that $(\EKa,\Da)$ defines a Laplacian on $\Ka$ in the weak sense as the unique non-positive, self-adjoint and densely defined operator $\Delta_{\mua}\colon\D(\La)\to L^2(\Ka,\mua)$ such that for any $u\in\D(\La)$ 
\[\EKa(u,v)=(-\La u,v)_{\mua}\qquad\forall\;v\in\Da.\]

\medskip

Following the same arguments as in~\cite{Kig03} it can be proven that all functions in the domain $\D(\La)$ have normal derivative in the sense of~\cite[Definition 2.3.1]{Strichartz06} equal zero on the boundary $V_{\alpha,0}$. Hence we can say that the functions in $\D(\La)$ satisfy homogeneous Neumann boundary conditions, and from now on we adopt the notation $\LaN$ for the Neumann Laplacian associated to $(\EKa,\Da)$.

\medskip

The Laplacian $\LaD$ subject to Dirichlet boundary conditions is defined ana\-lo\-gous\-ly by modifying the domain $\Da$ to $\Dao:=\{u\in\Da\,\vert\;u_{\vert_{\Va{0}}}\equiv 0\}$.

\begin{thm}\label{thm: Spec of Lap}
The operator $-\LaN$ has pure point spectrum consisting of countable many non-negative eigenvalues with finite multiplicity and only accumulation point at $+\infty$. The same holds for the operator $-\LaD$.
\end{thm}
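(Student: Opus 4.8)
The plan is to deduce the statement from the general spectral theorem for nonnegative self-adjoint operators with compact resolvent. Recall that $-\LaN$ is, by definition, the nonnegative self-adjoint operator on $L^2(\Ka,\mua)$ associated with the closed form $(\EKa,\Da)$, which is a regular Dirichlet form by Theorem~\ref{thm: EKa loc reg DF}. Once one knows that $-\LaN$ has compact resolvent, it follows immediately that its spectrum is discrete, i.e.\ pure point, consisting of a countable sequence of non-negative eigenvalues of finite multiplicity whose only accumulation point is $+\infty$. Thus everything reduces to showing that the inclusion map $(\Da,\norm{\cdot}_{\E_{(1)}})\hookrightarrow L^2(\Ka,\mua)$ is compact, which is legitimate because $\mua$ is a finite measure.

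I would establish this compact embedding via the Arzel\`a--Ascoli theorem, using the uniform H\"older estimate of Lemma~\ref{lem Dast in cont} as the crucial input. Let $(u_m)_{m\in\N}\subseteq\Da$ be bounded in the $\E_{(1)}$-norm, say $\E[u_m]\le M$ and $\norm{u_m}_{L^2(\Ka,\mua)}\le M^{1/2}$ for all $m$. By Lemma~\ref{lem Dast in cont},
\[
\abs{u_m(x)-u_m(y)}\le\tilde{C}\,\E^{1/2}[u_m]\,\abs{x-y}^{l_{\alpha}}\le\tilde{C}\,M^{1/2}\,\abs{x-y}^{l_{\alpha}}
\qquad\text{for all }x,y\in\Ka,
\]
so the family $(u_m)$ is equicontinuous. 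To obtain uniform boundedness, note that $\mua(\Ka)=1$ gives $\int_{\Ka}\abs{u_m}\,d\mua\le\norm{u_m}_{L^2(\Ka,\mua)}\le M^{1/2}$, so by continuity on the compact set $\Ka$ each $u_m$ attains some value of modulus at most $M^{1/2}$; together with the oscillation bound above and the compactness of $(\Ka,\abs{\cdot})$ (Proposition~\ref{prop RcompE}) this bounds $\norm{u_m}_{\infty}$ uniformly in $m$. Arzel\`a--Ascoli then yields a subsequence converging uniformly on $\Ka$, and since $\mua$ is finite, uniform convergence implies convergence in $L^2(\Ka,\mua)$. Hence the embedding is compact. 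Alternatively, one may invoke directly the resistance-form machinery of~\cite{Kig12}, since $\Ka$ is $R$-compact by Proposition~\ref{prop RcompE}.

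For the Dirichlet Laplacian $-\LaD$ the argument is identical: its form domain $\Dao=\{u\in\Da\mid u_{\vert_{V_0}}\equiv 0\}$ is a closed subspace of $\Da$, so the restriction of the compact embedding above is again compact, and $-\LaD$ therefore also has compact resolvent and discrete spectrum. The main obstacle is precisely the compactness of the embedding; once the H\"older estimate of Lemma~\ref{lem Dast in cont} is promoted to uniform equicontinuity and combined with the $L^2$-bound to pin down the absolute level of each function, the remainder is standard spectral theory. The only point requiring genuine care is the passage from the energy bound to a true $\norm{\cdot}_{\infty}$-bound, for which the finiteness and normalization $\mua(\Ka)=1$ of the measure are used in an essential way.
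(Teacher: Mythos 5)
Your argument is correct, but it takes a genuinely different route from the paper. The paper's proof is a three-line appeal to heavy machinery: Proposition~\ref{prop RcompE} ($R$-compactness) together with Kigami's Theorem~10.4 in~\cite{Kig12} yields a jointly continuous heat kernel, whence the semigroup $e^{-\LaN t}$ is ultracontractive by~\cite[Lemma 2.1.2]{Davies90}, and discreteness of the spectrum follows from~\cite[Theorem 2.1.4]{Davies90}. You instead prove compactness of the embedding $(\Da,\norm{\cdot}_{\E_{(1)}})\hookrightarrow L^2(\Ka,\mua)$ directly via Arzel\`a--Ascoli, using the uniform H\"older estimate of Lemma~\ref{lem Dast in cont} for equicontinuity and the normalization $\mua(\Ka)=1$ to pin down a uniform sup-bound, and then conclude compact resolvent. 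Both are valid; your version is more elementary and essentially self-contained given the paper's own Lemma~\ref{lem Dast in cont}, while the paper's route is shorter and yields strictly more along the way (a continuous kernel and ultracontractivity, which are of independent interest). Your closing parenthetical --- invoking the resistance-form machinery of~\cite{Kig12} via $R$-compactness --- is in fact the paper's actual argument. Two cosmetic points: the compactness of $\Ka$ in the Euclidean metric is immediate from $\Ka\in\mathscr{H}(\R^2)$, not a consequence of Proposition~\ref{prop RcompE} (which only asserts equivalence of the two topologies); and when citing Lemma~\ref{lem Dast in cont} you should note that the stated estimate is on $V_\ast$ with exponents $l_\alpha$ and $1/2$ in different cases, so one passes to all of $\Ka$ by density and takes the smaller exponent $l_\alpha$ uniformly, as the bounded diameter permits. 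Neither affects the validity of the proof.
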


\begin{proof}
Proposition~\ref{prop RcompE} together with~\cite[Theorem 10.4]{Kig12} imply that the Dirichlet form $(\EKa,\Da)$ has a jointly continuous kernel. Hence the semigroup $e^{-\LaN t}$ is ultracontractive by~\cite[Lemma 2.1.2]{Davies90} and the claim follows from~\cite[Theorem 2.1.4]{Davies90}. 
\end{proof}

\section{Spectral dimension}
This last result on the spectrum of the operators $-\LaN$ and $-\LaD$ allows us to study the asymptotic behavior of the eigenvalue counting function associated with each of them. In the following, whenever an statement holds for both operators, we will use the notation $\Delta_{\mua}^{N/D}$.

\medskip

\begin{defn}\label{defn: def evcf}
The \textit{eigenvalue counting function} of $-\Delta_{\mua}^{N/D}$ is defined for each $x\geq 0$ as
\[N_{N/D}(x):=\#\{\kappa~\vert~ \kappa\text{ eigenvalue of }-\Delta_{\mua}^{N/D}\text{ and }\kappa\leq x\},\]
counted with multiplicity.
\end{defn}

\medskip

\begin{rem} Given a Dirichlet form $(E,D)$ on a Hilbert space $H$, we say that $\kappa\in\R$ is an \textit{eigenvalue of} $E$ if and only if
 there exists $u\in D$, $u\neq 0$, such that $E(u,v)=\kappa(u,v)$ for all $v\in D$. 
 
The eigenvalue counting function can thus be defined for a Dirichlet form $( E, D)$ on a Hilbert space $H$ at any $x>0$ by
\[N(x;E,D):=\#\{\kappa~\vert~ \kappa\text{ eigenvalue of } E\text{ and } \kappa\leq x\},\]
counted with multiplicity.

Furthermore, it follows from~\cite[Proposition 4.2]{Lap91} that $N_N(x)=N(x;\EKa,\Da)$ and $N_D(x)=N(x;\EKa^0,\Da^0)$.
\end{rem}

\medskip

Given two functions $f,g\colon\R\to\R$, let us write $f(x)\asymp g(x)$ if there exist constants $C_1,C_2>0$ such that $C_1f(x)\leq g(x)\leq C_2f(x)$.

\medskip

The \textit{spectral dimension} of $\Ka$ describes the asymptotic behavior of both eigenvalue counting functions and it is defined as the number  $d_S(\Ka)>0$ (in case it exists) such that
\begin{equation}\label{eq: def d_S  Ka}
N_{N/D}(x)\asymp x^{\frac{d_S}{2}}\qquad\text{ as }x\to\infty.
\end{equation}

\medskip

In this section we estimate the eigenvalue counting function of the Laplacian $-\La$:
 
\begin{thm}\label{prop estimation evcf}
There exist constants $C_{\alpha,1},C_{\alpha,\beta,1},C_{\alpha,2},C_{\alpha,\beta,2}>0$ and $x_0>~0$ such that
\begin{equation}\label{eq prop esti evcf}
C_{\alpha,1}x^{\frac{\log 3}{\log 5}}+C_{\alpha,\beta,1}x^{\frac{1}{2}}\leq N_D(x)\\
\leq N_N(x)\leq C_{\alpha,2}x^{\frac{\log 3}{\log 5}}+C_{\alpha,\beta,2} x^{\frac{1}{2}}
\end{equation}
for all $x\geq x_0$. 
\end{thm}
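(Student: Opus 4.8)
The plan is to derive both inequalities by Dirichlet--Neumann bracketing, decoupling the form at the junction vertices $W_n$ into the $3^n$ cells $G_w(\Ka)$, $w\in\An$, together with the one--dimensional components of $J_n$, and then to optimise over the level $n$ as a function of $x$. The starting point is the partition $\Ka=\bigcup_{w\in\An}G_w(\Ka)\cup J_n$, together with three structural facts established earlier: by Theorem~\ref{thm: EKa loc reg DF} the form is local, so cutting at $W_n$ genuinely decouples it; by Corollary~\ref{cor scaling prop EKa} the restriction of $\E$ to functions supported in a single cell $G_w(\Ka)$ is an intrinsic copy of $\E$ up to an explicit energy factor governed by $\rho_n^d$; and each component $e\in\J_k\setminus\J_{k-1}$ carries the renormalised one--dimensional integral with weight $1/\rho_k^c$ as in~\eqref{def ren E_n}, i.e. the spectral problem on $e$ is that of an interval.

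First I would record the abstract monotonicity supplied by the min--max principle. Since $\Dao$ is a closed subspace of $\Da$, one has $\lambda^D_m\ge\lambda^N_m$ for every $m$, whence the middle inequality $N_D(x)\le N_N(x)$. For the decoupling, enlarging $\Da$ to functions that need not match across $W_n$ yields the orthogonal direct sum of the Neumann problems on the cells and on the segments; enlarging the form domain only lowers eigenvalues, so
\begin{equation*}
N_N(x)\le\sum_{w\in\An}N^{w}_N(x)+\sum_{e\in\J_n}N^{e}_N(x),
\end{equation*}
whereas imposing $u\vert_{W_n}\equiv0$ restricts $\Dao$ and hence
\begin{equation*}
N_D(x)\ge\sum_{w\in\An}N^{w}_D(x)+\sum_{e\in\J_n}N^{e}_D(x),
\end{equation*}
where $N^{w}_{N/D}$ and $N^{e}_{N/D}$ denote the counting functions of the decoupled cell and segment problems.

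Next I would estimate the two families of summands. For $e\in\J_k\setminus\J_{k-1}$ the problem is that of an interval of length $d_k$ with energy weight $d_k/\rho_k^c$ and mass density proportional to $\beta^{k-1}$, so the classical one--dimensional Weyl law gives $N^{e}_{N/D}(x)=c_k\sqrt{x}+O(1)$ with an explicit $c_k$; summing the $3^k$ segments at each sublevel $k\le n$ produces $\sqrt{x}\sum_{k}3^kc_k$ plus an $O(3^n)$ remainder, and the hypothesis~\eqref{eq: property of beta} on $\beta$ is precisely what makes the geometric series $\sum_k 3^k c_k$ converge. For a cell $G_w(\Ka)$ with $|w|=n$ the form is an intrinsic copy of $\E$ scaled in energy by $\rho_n^d$ and in mass by the cell measure; since $r^d_i=\frac{3}{5+3d_i}\to\frac35$ and $\sum_i d_i<\infty$, the product $(5/3)^n\rho_n^d$ converges to a positive constant $c(\alpha)$, while~\eqref{eq: property of beta} forces the $F_\alpha$--part of the cell mass ($\asymp 3^{-n}$) to dominate its $J_\alpha$--part. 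Consequently the eigenvalues of a level--$n$ cell are comparable to $5^n$ times the intrinsic ones, giving $N^{w}_{N/D}(x)\asymp N_{N/D}\big(c(\alpha)^{-1}5^{-n}x\big)$.

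Finally I would choose $n=n(x)\sim\log_5 x$. For the upper bound I pick $n$ so that $c(\alpha)^{-1}5^{-n}x$ lies below the first nonzero Neumann eigenvalue, whence each cell contributes $N_N(O(1))=1$ and the $3^n\asymp x^{\log3/\log5}$ cells yield the first term, the convergent segment series yields the $x^{1/2}$ term, and the $O(3^n)$ remainder is absorbed into the first term. For the lower bound I pick $n$ so that $c(\alpha)^{-1}5^{-n}x$ exceeds the first Dirichlet eigenvalue, so that each of the $3^n$ cells contributes at least one eigenvalue while the sublevel--one segments alone already give a term $\gtrsim x^{1/2}$; as Dirichlet decoupling produces a genuine \emph{sum} of cell and segment counts, both terms appear additively, matching~\eqref{eq prop esti evcf}. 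The main obstacle is the non--self--similarity: because the renormalisation factors $r_n^d$ and the segment weights depend on the level, the decisive technical points are (i) the convergence of the infinite product $\prod_i(5/3)r_i^d$, which turns the cell eigenvalue scaling into a genuine $\asymp 5^n$ and thereby isolates the gasket exponent $\log3/\log5$, and (ii) the verification, under~\eqref{eq: property of beta}, that the continuous part of the measure decays fast enough not to perturb that exponent while still producing the separate $x^{1/2}$ contribution.
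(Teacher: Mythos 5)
Your overall architecture coincides with the paper's: decouple at the junction vertices into the $3^m$ cells and the wire segments (the paper's Lemmas~\ref{lem: Neumann upper +decomp} and~\ref{lem: lower +decomp}, proved exactly as you indicate via domain monotonicity and the minimax principle), handle the segments by the one--dimensional Weyl law with the series $\sum_k 3^k c_k$ made convergent by the choice of $\beta$ in~\eqref{eq: property of beta} (Lemmas~\ref{lem: Jam upper} and~\ref{lem: Jam lower}), and then count cells at the scale $x\asymp 5^m$. The middle inequality $N_D\le N_N$ is also obtained the same way.

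The one step that does not survive scrutiny is your claim that the form on a level--$n$ cell is ``an intrinsic copy of $\E$ scaled in energy by $\rho_n^d$ and in mass by the cell measure'', leading to the two--sided comparison $N^{w}_{N/D}(x)\asymp N_{N/D}\bigl(c(\alpha)^{-1}5^{-n}x\bigr)$. The paper explicitly remarks that $\E_{\Kak{w}}[u]$ does \emph{not} coincide with a constant multiple of $\E_{\Ka}[u\circ\Ga{w}^{-1}]$: by Corollary~\ref{cor scaling prop EKa} the discrete part of the cell energy scales like $(5/3)^m$ but the continuous part carries the much smaller factors $(5/3)^m\left(\frac{1-\alpha}{2}\right)^{2m}$ and $\left(\frac{1-\alpha}{2}\right)^m$, and the restricted measure $\mua\vert_{\Kak{w}}$ is likewise not a rescaled copy of $\mua$ (the wire weights $\beta^k$ do not renormalise self--similarly). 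Hence no uniform two--sided eigenvalue comparison with the intrinsic problem is available, and this is precisely the price of non--self--similarity. What the argument actually needs, and what the paper proves, are two \emph{one--sided} bounds: for the upper bound, that the $(3^m+1)$--th Neumann eigenvalue of the cell direct sum is at least $C_P5^m$ (Lemma~\ref{lem: Neumann eigenv Kam}, via a $3^m$--dimensional space of locally constant functions, the inequality $\E[u]\ge(5/3)^m\sum_w\E^d[u_w]$ and the resistance--diameter estimate from Proposition~\ref{prop RcompE}); and for the lower bound, that the first Dirichlet eigenvalue of each cell is at most $C_D5^m$ (Lemma~\ref{lem: 1.ev Kaw}, via a test function pulled back from the Sierpi\'nski gasket which is constant on every wire, so that only the discrete energy survives and the convergent product $\lim_n(3/5)^n(\rho_n^d)^{-1}=L$ — the quantity you correctly identify — yields the factor $(5/3)^mL\E_K[u]$). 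If you replace your two--sided scaling claim by these two one--sided minimax estimates, your proof closes and is the paper's proof.
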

%If we choose $\widetilde{C}_{\alpha,\beta,1}=\min\{C_{\alpha,1},C_{\alpha,\beta,1}\}$, $\widetilde{C}_{\alpha,\beta,2}:=\max\{C_{\alpha,2},C_{\alpha,\beta,2}\}$, then
%\begin{equation}
%\widetilde{C}_{\alpha,\beta,1}\left(x^{\frac{\log 3}{\log 5}}+x^{\frac{1}{2}}\right)\leq N_D(x)\\
%\leq N_N(x)\leq \widetilde{C}_{\alpha\beta,2}\left(x^{\frac{\log 3}{\log 5}}+x^{\frac{1}{2}}\right)
%\end{equation}
%for all $x\geq x_0$.

\medskip

This leads to
\begin{cor}\label{thm d_S Ka}
For any $0<\alpha<1/3$,
\begin{equation*}
d_S (\Ka)=\frac{2\log 3}{\log 5}.
\end{equation*}
\end{cor}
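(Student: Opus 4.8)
The plan is to read Corollary~\ref{thm d_S Ka} directly off the two-sided estimate of Theorem~\ref{prop estimation evcf}, which I take as given; all that remains is to isolate the dominant power of $x$ and to match it against the definition of the spectral dimension. First I would record the elementary but decisive fact that the exponent $a:=\frac{\log 3}{\log 5}$ strictly exceeds $\tfrac12$: the inequality $a>\tfrac12$ is equivalent to $2\log 3>\log 5$, i.e.\ to $9>5$, which holds. In particular $a$ does not depend on $\alpha$ or $\beta$, which is exactly where the $\alpha$-independence asserted in the Corollary will come from.

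Next I would absorb the subleading terms in~\eqref{eq prop esti evcf}. Since $a\ge\tfrac12$, we have $x^{1/2}\le x^{a}$ for every $x\ge 1$, so on the right-hand side the $x^{1/2}$-contribution can be folded into the $x^{a}$-contribution, while on the left-hand side the nonnegative $x^{1/2}$-term is simply discarded. Thus for all $x\ge\max\{x_0,1\}$,
\[
C_{\alpha,1}\,x^{a}\le N_D(x)\le N_N(x)\le\bigl(C_{\alpha,2}+C_{\alpha,\beta,2}\bigr)\,x^{a}.
\]
Setting $C_1:=C_{\alpha,1}$ and $C_2:=C_{\alpha,2}+C_{\alpha,\beta,2}$ yields $N_{N/D}(x)\asymp x^{a}$ as $x\to\infty$, simultaneously for the Neumann and the Dirichlet counting functions.

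Finally I would compare this with the defining relation $N_{N/D}(x)\asymp x^{d_S/2}$ for the spectral dimension. Two monomials $x^{\gamma}$ and $x^{\delta}$ are comparable in the sense of $\asymp$ as $x\to\infty$ only when $\gamma=\delta$, since otherwise their quotient is either unbounded or tends to $0$; hence the exponent appearing in an $\asymp$-asymptotic is uniquely determined. Combining this with $N_{N/D}(x)\asymp x^{a}$ forces $d_S/2=a$, that is $d_S(\Ka)=\frac{2\log 3}{\log 5}$, and the value is independent of $\alpha\in(0,1/3)$ and of the admissible parameter $\beta$.

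There is no real obstacle at the level of the Corollary itself: the entire analytic difficulty is concentrated in Theorem~\ref{prop estimation evcf}, whose proof must produce both the sharp exponent $\frac{\log 3}{\log 5}$ and the genuinely subleading $x^{1/2}$ correction (which originates from the one-dimensional bridges constituting $\Ja$). The only point here that requires—entirely routine—care is the uniqueness of the exponent in an $\asymp$-asymptotic, as it is precisely what legitimises reading $d_S$ off the leading term while harmlessly ignoring the $x^{1/2}$ part.
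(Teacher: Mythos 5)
Your proposal is correct and follows exactly the route the paper intends: the Corollary is stated as an immediate consequence of Theorem~\ref{prop estimation evcf}, and the paper's (implicit) argument is precisely your observation that $\frac{\log 3}{\log 5}>\frac12$ makes the $x^{1/2}$ terms subleading, so that $N_{N/D}(x)\asymp x^{\log 3/\log 5}$ and the definition of $d_S$ forces $d_S=\frac{2\log 3}{\log 5}$. Nothing is missing.
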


The proof of this result is based on the \textit{minimax principle} for the eigenvalues of non-negative self-adjoint operators and it follows ideas of~\cite{Kaj10}. Details about the minimax principle can be found in~\cite[Chapter 4]{Davies95}. 
\subsection{Spectral asymptotics of the Laplacian}
This section is devoted to the proof of Theorem~\ref{prop estimation evcf} and is divided into two parts: upper and lower bound of the eigenvalue counting function.

%First of all, note that $(\E_{\Jak{m}},\D_{\Jak{m}})$ is a Dirichlet form on $L^2(\Jak{m})$ for any $m\in\N$ because $\Jak{m}$ is an open set. Moreover, since $\Jak{m}$ is just a finite union of $1-$dimensional open intervals and $\mua$ is locally a multiple of $1$-dimensional Lebesgue measure, we may identify $\D_{\Jak{m}}$ with the Sobolev space $\bigoplus\limits_{e\in\Jake{m}}H^1_0(e,dx)$.
\subsubsection*{Upper bound}

We start by decomposing $\Ka$ into suitable pieces where we have a better control of the eigenvalues. For each $m\geq 0$ and any word $w\in\A^m$ we write $\Kak{w}:=\Ga{w}(\Ka)$ and $\Kak{m}:=\bigcup\limits_{w\in\A^m}\Kak{w}$. Note that $\Ka=\Kak{m}\cup \Jak{m}$.

\medskip

On one hand, following the same construction as in Section 2, we approximate $\Kak{w}$ by the sets $V_{\alpha,w,n}$ as $\Ka$ was approximated by $\Van$. Then we can define a resistance form $(\E_{\Kak{w}},\F_{\Kak{w}})$ on $\Kak{w}$ by
\begin{align*}
\E_{\Kak{w}}(u,v)&:=\lim_{n\to\infty}\E_{\Kak{w}}^{(n)}(u_{\vert_{V_{\alpha,w,n}}},v_{\vert_{V_{\alpha,w,n}}}),\\
\F_{\Kak{w}}&:=\{u\colon\Kak{w}\to\R~\vert~\lim_{n\to\infty}\E_{\Kak{w}}^{(n)}(u_{\vert_{V_{\alpha,w,n}}},u_{\vert_{V_{\alpha,w,n}}})<\infty\}.
\end{align*}
%where $\E_{\Kak{w}}^{(n)}[u]:=\EKa[\overline{u}]$ for each $u\colon\Kak{w}\to\R$, and
%\begin{equation*}
%\overline{u}(x):=\left\{
%\begin{array}{ll}
%u(x),&x\in\Kak{w},\\
%0,&x\in\Ka\setminus\Kak{w}.
%\end{array}
%\right.
%\end{equation*}
Further, we consider the Dirichlet form $(\E_{\Kak{w}},\D_{\Kak{w}})$ on $L^2(\Kak{w},\mua{}_{\vert_{\Kak{w}}})$ induced by this resistance form.

\medskip

Finally we consider the Dirichlet form $(\E_{\Kak{m}},\D_{\Kak{m}})$ on $L^2(\Kak{m},\mua{}_{\vert_{\Kak{m}}})$ given by
\begin{equation}\label{def E_Kam}
\E_{\Kak{m}}:=\bigoplus_{w\in\A^m}\E_{\Kak{w}},\quad
\D_{\Kak{m}}:=\bigoplus_{w\in\A^m}\D_{\Kak{w}}.
\end{equation}

On the other hand, we consider the Dirichlet form $(\E_{\Jak{m}},\D_{\Jak{m}})$ given by
\begin{equation}\label{def E_Jam}
\E_{\Jak{m}}(u,v):=\sum_{k=1}^m\sum_{e\in\J_{\alpha,k}\setminus\J_{\alpha,k-1}}\frac{1}{\rho_k^c}\int_e u'v'\,dx,\quad \D_{\Jak{m}}:=\bigoplus_{e\in\J_{\alpha,m}} H^1(e,dx).
\end{equation}
For ease of the reading, we will drop the measure from the $L^2-$spaces whenever we consider $\mua$ or its restriction.

\begin{rem}
Note that for any $m\in\N$ and $w\in\A^m$, the domain $\F_{\Kak{w}}$ could also be characterized by $\F_{\Kak{w}}=\F_{\Ka}\circ\Ga{w}^{-1}$. In contrast to the self-similar case, the explicit expression of $\E_{\Kak{w}}[u]$ does \textit{not} coincide with $\E_{\Ka}[u\circ \Ga{w}^{-1}]$. However, by construction we have the useful identity
\[
\E_{\Ka}[u]=\sum_{w\in\A^m}\E_{\Kak{w}}[u_{\vert_{\Kak{w}}}]+\E_{\Jak{m}}[u_{\vert_{\Jak{m}}}],\quad u\in\F_{\Ka}.
\]

\end{rem}

\begin{lemma}\label{lem: Neumann upper +decomp}
For any $m\in\N_0$, let $H_{\Jak{m}}$ be the non-negative self-adjoint operator on $L^2(\Jak{m})$ associated with the Dirichlet form $(\E_{\Jak{m}},\D_{\Jak{m}})$ and let $H_{\Kak{m}}$ be the non-negative self-adjoint operator on $L^2(\Kak{m})$ associated with the Dirichlet form $(\E_{\Kak{m}},\D_{\Kak{m}})$. Then, $H_{\Jak{m}}$ and $H_{\Kak{m}}$ have both compact resolvent and for $N_{\Jak{m}}(x):=N(x;\E_{\Jak{m}},\D_{\Jak{m}})$ %=N_{H_{\Jak{m}}}(x)$ 
and $N_{\Kak{m}}(x):=N(x;\E_{\Kak{m}},\D_{\Kak{m}})$, %=N_{H_{\Kak{m}}}(x)$, 
it holds that 
\[N_N(x)\leq N_{\Kak{m}}(x)+N_{\Jak{m}}(x)\]
for any $x\geq 0$.
\end{lemma}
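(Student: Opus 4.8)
The plan is to identify $N_N(x)=N(x;\EKa,\Da)$ (by the remark preceding Theorem~\ref{prop estimation evcf}) as the counting function of the form $(\E_{\Kak{m}}\oplus\E_{\Jak{m}},\D_{\Kak{m}}\oplus\D_{\Jak{m}})$ with the extra continuity constraints at the junction points imposed, and then to apply the minimax principle together with the additivity of the counting function over an orthogonal direct sum. First I would fix the Hilbert space decomposition. Since $\Ka=\Kak{m}\cup\Jak{m}$ and the overlap $\Kak{m}\cap\Jak{m}$ consists of the finitely many endpoints of the segments in $\J_{\alpha,m}$, a $\mua$-null set, the restriction map $u\mapsto(u_{\vert_{\Kak{m}}},u_{\vert_{\Jak{m}}})$ gives an orthogonal identification $L^2(\Ka)\cong L^2(\Kak{m})\oplus L^2(\Jak{m})$, and in particular $\norm{u}^2_{L^2(\Ka)}=\norm{u_{\vert_{\Kak{m}}}}^2+\norm{u_{\vert_{\Jak{m}}}}^2$.

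Next I would invoke the energy identity recorded in the remark above, $\EKa[u]=\sum_{w\in\Am}\E_{\Kak{w}}[u_{\vert_{\Kak{w}}}]+\E_{\Jak{m}}[u_{\vert_{\Jak{m}}}]$, to conclude that under this identification the restriction map embeds $(\EKa,\Da)$ isometrically, \emph{both} in energy and in $L^2$-norm, onto a subspace of $(\E_{\Kak{m}}\oplus\E_{\Jak{m}},\D_{\Kak{m}}\oplus\D_{\Jak{m}})$. The crucial observation is that this image is a \emph{proper} subspace: every $u\in\Da\subseteq C(\Ka)$ (recall $\FKa\subset C(\Ka)$ via Lemma~\ref{lem Dast in cont}) is forced to be continuous across the points where the segments of $\Jak{m}$ meet the cells $\Kak{w}$, whereas the direct sum lets its two components take independent values there. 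Thus $\Da$ is realized as a subspace of $\D_{\Kak{m}}\oplus\D_{\Jak{m}}$ on which the two quadratic forms coincide.

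The eigenvalue inequality is then immediate from the minimax principle: restricting the form domain can only raise each eigenvalue, since the infimum defining the $n$-th eigenvalue is taken over strictly fewer $n$-dimensional subspaces while the form is unchanged, so $\lambda_n(\EKa,\Da)\geq\lambda_n(\E_{\Kak{m}}\oplus\E_{\Jak{m}},\D_{\Kak{m}}\oplus\D_{\Jak{m}})$ for every $n$. Consequently $N_N(x)\leq N(x;\E_{\Kak{m}}\oplus\E_{\Jak{m}},\D_{\Kak{m}}\oplus\D_{\Jak{m}})$, and because the operator attached to an orthogonal direct sum is block diagonal, its spectrum is the union of the two spectra with multiplicities, so the right-hand side equals $N_{\Kak{m}}(x)+N_{\Jak{m}}(x)$, as claimed.

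It remains to check that both resolvents are compact, which simultaneously guarantees the two counting functions are finite. For $H_{\Kak{m}}=\bigoplus_{w\in\Am}H_{\Kak{w}}$ I would argue that each $\Kak{w}$ is a scaled copy of $\Ka$, hence $R$-compact by the scaled analogue of Proposition~\ref{prop RcompE}, so each $H_{\Kak{w}}$ has compact resolvent by the argument of Theorem~\ref{thm: Spec of Lap}; a finite direct sum (over the $3^m$ words of $\Am$) preserves this property. For $H_{\Jak{m}}$, the form $(\E_{\Jak{m}},\D_{\Jak{m}})$ defined in~\eqref{def E_Jam} is a finite direct sum of one-dimensional Neumann Laplacians on the bounded intervals $e\in\J_{\alpha,m}$, rescaled by the factors $(\rho_k^c)^{-1}$ and the weights of $\mua_{\vert_{\Jak{m}}}$, each of which has purely discrete spectrum and hence compact resolvent. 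The step requiring the most care is the proper-inclusion claim, that is, verifying the restriction map is a genuine isometry onto a subspace whose complement is exactly the continuity defect at the junctions; this is what legitimizes the minimax comparison and delivers the inequality rather than an identity.
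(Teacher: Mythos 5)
Your argument is essentially the paper's proof: the same orthogonal decomposition $L^2(\Ka,\mua)=L^2(\Kak{m})\oplus L^2(\Jak{m})$ (the two pieces are in fact disjoint, since the segments in $\Jak{m}$ exclude their endpoints), the same identity $\EKa=\E_{\Kak{m}}\oplus\E_{\Jak{m}}$ with $\Da\subseteq\D_{\Kak{m}}\oplus\D_{\Jak{m}}$, and the same domain-monotonicity of the minimax values, which the paper simply outsources to [Lap91, Proposition 4.2, Lemma 4.2]. The only minor divergence is the compactness of the resolvent of $H_{\Kak{m}}$, which the paper defers to Lemma~\ref{lem: Neumann eigenv Kam} (via the eigenvalue lower bound) while you argue directly through the $R$-compactness of the scaled copies $\Kak{w}$ and the ultracontractivity argument of Theorem~\ref{thm: Spec of Lap}; both routes are valid.
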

\begin{proof}
The statements about compactness of the resolvent are proved in Lemma~\ref{lem: Jam upper} and Lemma~\ref{lem: Neumann eigenv Kam} respectively.
 
First, $L^2(\Ka,\mua)=L^2(\Kak{m},\mua{}_{\vert_{\Kak{m}}})\oplus L^2(\Jak{m},\mua{}_{\vert_{\Jak{m}}})$ holds because $\Kak{m}\cap\Jak{m}=\emptyset$. By definition of $(\E_{\Kak{m}},\D_{\Kak{m}})$ and $(\E_{\Jak{m}},\D_{\Jak{m}})$ we have that $\EKa=\E_{\Kak{m}}\oplus\E_{\Jak{m}}$ and finally, since $\Da\subseteq\D_{\Kak{m}}\oplus~\D_{\Jak{m}}$, we get from~\cite[Proposition 4.2, Lemma 4.2]{Lap91} that
\begin{equation*}
N_N(x)\leq N(x;\EKa,\D_{\Kak{m}}\oplus\D_{\Jak{m}})=N_{\Kak{m}}(x)+N_{\Jak{m}}(x).
\end{equation*}
\end{proof}

\begin{lemma}\label{lem: Jam upper}
For each $m\in\N$, the non-negative self-adjoint operator $H_{\Jak{m}}$ associated with the Dirichlet form $(\E_{\Jak{m}},\D_{\Jak{m}})$ on $L^2(\Jak{m})$ has compact resolvent. Further, there exist a constant $C_{\alpha,\beta,2}>0$ depending on $\alpha$ and $\beta$, and $x_0>0$ such that
\begin{equation}\label{eq: Jam upper}
N_{\Jak{m}}(x)\leq C_{\alpha,\beta,2}\,x^{1/2}
\end{equation}
for all $x\geq x_0$, where $N_{\Jak{m}}(x):=N(x;\E_{\Jak{m}},\D_{\Jak{m}})$.
\end{lemma}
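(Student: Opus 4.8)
The plan is to exploit the fact that $(\E_{\Jak{m}},\D_{\Jak{m}})$ splits as an orthogonal direct sum over the finitely many (open, pairwise disjoint) line segments making up $\Jak{m}$, reducing the whole problem to one-dimensional computations. Since $\D_{\Jak{m}}=\bigoplus_{e\in\J_{\alpha,m}}H^1(e,dx)$ and the form acts on each summand independently, the associated operator $H_{\Jak{m}}$ is the direct sum of the operators $H_e$ attached to the individual segments $e$. On a single segment of level $k$ the form is $\frac{1}{\rho_k^c}\int_e u'v'\,dx$ and the ambient inner product is that of $L^2(e,\mua)$, whose density on $e$ is the constant $b_k:=\beta^{k-1}/\bigl(2\tilde{\mu}_{\alpha,\beta}^c(\Ja)\bigr)$ times Lebesgue measure (the $\muad$-part of $\mua$ charges $\Ja$ with mass zero). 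Each $H_e$ is therefore a weighted Neumann Laplacian on a bounded interval; by compactness of the embedding $H^1(e)\hookrightarrow L^2(e)$ it has compact resolvent, and a finite direct sum of such operators again has compact resolvent. Consequently $N_{\Jak{m}}(x)=\sum_{e\in\J_{\alpha,m}}N_e(x)$, where $N_e$ denotes the eigenvalue counting function of $H_e$.

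Next I would compute $N_e$ explicitly. A segment $e\in\J_{\alpha,k}\setminus\J_{\alpha,k-1}$ has length $d_k=\alpha\bigl(\tfrac{1-\alpha}{2}\bigr)^{k-1}$, and the eigenvalue equation $\frac{1}{\rho_k^c}\int_e u'v'\,dx=\kappa\int_e uv\,b_k\,dx$ for all $v\in H^1(e)$ is, in strong form, $-u''=\kappa\,b_k\rho_k^c\,u$ with Neumann boundary conditions. Hence the eigenvalues are $\kappa_n=\frac{n^2\pi^2}{d_k^2\,b_k\rho_k^c}$, $n\ge 0$, and these provide an orthonormal basis, so that
\[
N_e(x)\le 1+\frac{d_k\sqrt{b_k\rho_k^c}}{\pi}\,\sqrt{x}.
\]
Since there are exactly $3^k$ segments at level $k$ (each word $w\in\A^{k-1}$ contributes three), summing over $k=1,\dots,m$ gives
\[
N_{\Jak{m}}(x)\le\sum_{k=1}^m 3^k+\frac{\sqrt{x}}{\pi}\sum_{k=1}^m 3^k\,d_k\sqrt{b_k\rho_k^c}.
\]

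The crucial step is to bound the second sum uniformly in $m$, and this is precisely where the admissibility condition $\beta<\bigl(\tfrac{2}{3(1-\alpha)}\bigr)^2$ enters. I would analyse the geometric decay rate of the general term $3^k d_k\sqrt{b_k\rho_k^c}$. Using $\sum_i d_i<\infty$ one has $\rho_{k-1}^d=\prod_{i=1}^{k-1}\frac{3}{5+3d_i}\sim C\bigl(\tfrac{3}{5}\bigr)^{k-1}$ and $r_k^c=\frac{3d_k}{5+3d_k}\sim\tfrac{3}{5}d_k$, so $\rho_k^c=\rho_{k-1}^d r_k^c$ decays with ratio $\tfrac{3}{5}\cdot\tfrac{1-\alpha}{2}$; combined with the ratios $\tfrac{1-\alpha}{2}$ of $d_k$ and $\beta$ of $b_k$, the term decays geometrically with ratio $q$ satisfying
\[
q^2=\frac{27(1-\alpha)^3\beta}{40}<\frac{27(1-\alpha)^3}{40}\cdot\frac{4}{9(1-\alpha)^2}=\frac{3}{10}(1-\alpha)<1.
\]
Thus $B:=\frac{1}{\pi}\sum_{k=1}^\infty 3^k d_k\sqrt{b_k\rho_k^c}<\infty$ is a constant depending only on $\alpha$ and $\beta$.

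Finally, setting $C_{\alpha,\beta,2}:=2B$, the first sum $A_m:=\sum_{k=1}^m 3^k$ is a finite constant for each fixed $m$, so choosing $x_0=x_0(m)$ large enough that $A_m\le B\sqrt{x}$ for all $x\ge x_0$ yields $N_{\Jak{m}}(x)\le A_m+B\sqrt{x}\le C_{\alpha,\beta,2}\sqrt{x}$, as claimed. The main obstacle is the convergence of the series in the third paragraph: one must track the asymptotics of the renormalization factor $\rho_k^c$ (not merely those of $d_k$ and of the measure weight $\beta^{k-1}$) and verify that the constraint on $\beta$ is exactly what forces the geometric ratio $q$ below one; the accumulation of the $3^k$ zero modes is harmless and is absorbed into the $m$-dependent threshold $x_0$.
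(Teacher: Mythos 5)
Your proof is correct and follows essentially the same route as the paper: decompose $(\E_{\Jak{m}},\D_{\Jak{m}})$ over the finitely many segments, identify each piece with a rescaled one-dimensional Neumann Laplacian, and sum the resulting counting functions using the admissibility condition on $\beta$ to control the geometric series $\sum_k 3^k d_k\sqrt{b_k\rho_k^c}$. The only differences are cosmetic and in your favour: you use the exact Neumann spectrum on an interval instead of invoking Weyl's theorem, and you explicitly account for the $\sum_{k\le m}3^k$ zero modes by letting $x_0$ depend on $m$, a point the paper absorbs silently into its $o(x^{1/2})$ terms.
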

\begin{proof}
Note that the operator $H_{\Jak{m}}$ is the sum of classical one-dimensional Laplacians $-\Delta$ time constant restricted to the finite union of intervals $(a_e,b_e)$. Hence it has compact resolvent.% which has compact resolvent since $\Jak{m}$ is bounded.

Let us now prove the inequality~\eqref{eq: Jam upper}.  
%%%% the next lines cut
%, i.e. 
%\begin{equation}\label{eq: Ja asymp I}
%\E_{\Jak{m}}(u,v)=\kappa(u,v)_{\mua{}_{\vert_{\Jak{m}}}}\qquad\forall\,v\in \D_{\Jak{m}}.
%\end{equation}
%Now consider $e\in\Jake{m}$ and for any $h\in H^1_0(e,dx)$ define 
%%%%%%%%% new line
For any $e\in\Jake{m}\setminus\Jake{m-1}$ and $h\in H^1(e,dx)$ define
\[\tilde{h}(x):=\left\{\begin{array}{rl}
								h(x),&\text{if }x\in e,\\
								0,&\text{if }x\in\Jak{m}\setminus e.
\end{array}\right. \]
Then, $\tilde{h}\in\D_{\Jak{m}}$ and given $u\in\D_{\Jak{m}}$ an eigenfunction of $(\E_{\Jak{m}},\D_{\Jak{m}})$ with eigenvalue $\kappa$ we get
%%%%%% this line cut
%we get from~\eqref{eq: Ja asymp I} that
%%%%%%%%%%%%% first equality cut
\begin{align*}
\int_e\nabla u\nabla h\,dx&=\rho_{\alpha,m}^c\frac{1}{\rho_{\alpha,m}^c}\int_e\nabla u\nabla \tilde{h}\,dx=\rho_{\alpha,m}^c\E_{\Jak{m}}(u,\tilde{h})\\
%&=\Theta_{\alpha}\E_{\Jak{m}}(u,\tilde{h})=\Theta_{\alpha}\kappa(u,\tilde{h})_{\mua{}_{\vert_{\Jak{m}}}}\\
%&=\lambda\sum_{e\in\Ja}\int_ef\tilde{h}\,d\muac(x)
&=\rho_{\alpha,m}^c\kappa\int_e uh\,d\mua=\rho_{\alpha,m}^c\kappa\beta^m\int_e uh\,dx.
\end{align*}
This implies that
\[\int_e\nabla u\nabla h\,dx=\rho_{\alpha,m}^c\kappa\beta^m\int_e uh\,dx\qquad\forall\,h\in H^1(e,dx),\]
hence $\rho_{\alpha,m}^c\kappa\beta^m$ is an eigenvalue of the classical Laplacian $-\Delta$ on $L^2(e,dx)$ subject to Neumann boundary conditions with eigenfunction $u_{\vert_e}$.

Conversely,  if for any $m\geq 1$ and $e\in\Jake{m}\setminus\Jake{m-1}$, $\rho_{\alpha,m}^c\kappa\beta^m$ is an eigenvalue of the classical Laplacian $-\Delta$ on $L^2(e,dx)$ subject to Neumann boundary conditions with eigenfunction $u\in H^1e,dx)$, an analogous computation shows that $\kappa$ is an eigenvalue of $(\E_{\Jak{m}},\D_{\Jak{m}})$ with eigenfunction 
\[\tilde{u}(x):=\left\{\begin{array}{rl}
									u(x),&x\in e,\\
									0,&x\in\Jak{m}\setminus e.
\end{array}\right.\]
Hence, if we denote by $N_e(x)$ the classical Neumann eigenvalue counting function of $-\Delta_{\vert_e}$ we have
\begin{equation}\label{eq: Ja asymp II}
N_{\Jak{m}}(x)=\sum_{k=1}^m \sum_{e\in\Jake{k}\setminus\Jake{k-1}}N_e\left(\rho_{\alpha,k}^c\beta^kx\right)
\end{equation}
and we know form Weyl's theorem (see~\cite{Wey12} for the original version,~\cite{Lap91} for this expression) that 
\[
N_e(x)=\frac{\lambda(e)}{\pi}x^{1/2}+o(x^{1/2})\qquad\text{as }x\to\infty
\]
for each $e\in\Jake{m}$. 
%%%%%%%%%% next lines cut
%and in particular, there exist $\tilde{c}_1,\tilde{c}_2>0$ (independent of $e$) and $x_0>0$ such that
%\[ \tilde{c}_1\frac{(\beta_e\Theta_{\alpha})^{1/2}}{\pi}\lambda(e)x^{1/2}\leq N_e(\beta_e\Theta_{\alpha}x)\leq \tilde{c}_2\frac{(\beta_e\Theta_{\alpha})^{1/2}}{\pi}\lambda(e)x^{1/2}\]
%for all $x\geq x_0$.
%By substituting this in~\eqref{eq: Ja asymp II} we get
Summing up over all levels we get
\begin{align*}
N_{\Jak{m}}(x)=\frac{\alpha}{\pi}\sum_{k=1}^m3^k\left(\frac{1-\alpha}{2}\right)^{k-1}\left(\rho_{\alpha,k}^c\right)^{1/2}\beta^{k/2}x^{1/2}+o(x^{1/2})
%N_{\Jak{m}}(x)=\frac{\alpha}{\pi}\sum_{k=1}^m3^k\left(\frac{1-\alpha}{2}\right)^{k-1}\left(\prod_{i=1}^{k-1}\raid\cdot r_{\alpha,k}^c\right)^{1/2}\beta^{k/2}+o(x^{1/2})
\end{align*}
as $x\to\infty$. 

Since $\beta$ was chosen in~\eqref{eq: property of beta} so that $\sum\limits_{k=1}^\infty\left(3\frac{1-\alpha}{2}\right)^k\beta^{k/2}<\infty$ and $\rho_{\alpha,k}<1$ for all $k\geq 1$,~\eqref{eq: Jam upper} follows with
\[
C_{\alpha,\beta,2}:= \frac{\alpha}{\pi}\sum_{k=1}^\infty3^k\left(\frac{1-\alpha}{2}\right)^{k-1}\beta^{k/2}.
\]
%and $C_{\alpha,\beta,2}:=C_{\alpha,\beta,1}+1$.
\end{proof}

We recall now the following result from spectral theory of self-adjoint ope\-ra\-tors.
\begin{lemma}\label{lem: cpt resolvent Davies}
Let $(E,D)$ be a Dirichlet form on a Hilbert space $H$ and let $A$ be the non-negative self-adjoint operator on $H$ associated with it. Further, define
\[\kappa(L):=\sup\left\{E(u,u)~\vert~u\in L,~ \norm{u}_H=1\right\},\quad L\subseteq D\text{ subspace},\]
and
\[\kappa_n:=\inf\{\kappa(L)~\vert~ L \text{ subspace of }D,~\dim L=n\}.\]
If the sequence $\{\kappa_n\}_{n=1}^{\infty}$ is unbounded, then the operator $A$ has compact resolvent.
\end{lemma}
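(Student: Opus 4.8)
The plan is to deduce compactness of the resolvent from the behaviour of the min-max values $\kappa_n$ by invoking the Courant--Fischer--Weyl characterization of the spectrum of a non-negative self-adjoint operator. The point is that the numbers $\kappa_n$ are precisely the min-max values attached to $E$, and the min-max theorem ties these values to the bottom of the essential spectrum of $A$; once $\sigma_{\mathrm{ess}}(A)$ is shown to be empty, compactness of the resolvent is immediate.

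First I would record an elementary monotonicity: if $L'\subseteq L$ are subspaces of $D$, then $\kappa(L')\leq\kappa(L)$, since the supremum defining $\kappa$ is taken over a smaller set. As every $(n+1)$-dimensional subspace contains an $n$-dimensional one, this yields $\kappa_n\leq\kappa_{n+1}$, so $\{\kappa_n\}_{n\geq 1}$ is non-decreasing. Consequently the hypothesis that $\{\kappa_n\}$ is unbounded is equivalent to $\kappa_n\to\infty$ as $n\to\infty$.

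Next I would identify $\kappa_n$ with the min-max value
\[
\kappa_n=\inf_{\substack{L\subseteq D\\ \dim L=n}}\ \sup_{\substack{u\in L\\ u\neq 0}}\frac{E(u,u)}{\norm{u}_H^2},
\]
which is exactly the quantity governed by the min-max principle for the quadratic form $E$ associated with $A$. The key input, quoted from the min-max theorem, is the following dichotomy: writing $\Sigma:=\inf\sigma_{\mathrm{ess}}(A)$ with the convention $\Sigma=+\infty$ when $\sigma_{\mathrm{ess}}(A)=\emptyset$, for each $n$ either $\kappa_n<\Sigma$ and then $\kappa_n$ is the $n$-th eigenvalue of $A$ counted with multiplicity, or $\kappa_n=\Sigma$ and then $\kappa_m=\Sigma$ for all $m\geq n$. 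In either case one has the bound $\kappa_n\leq\Sigma$ for every $n$.

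The conclusion then follows by contraposition. If $\sigma_{\mathrm{ess}}(A)$ were non-empty, then $\Sigma<\infty$ and the bound $\kappa_n\leq\Sigma$ would make $\{\kappa_n\}$ bounded, contradicting the hypothesis. Hence $\sigma_{\mathrm{ess}}(A)=\emptyset$, and for a self-adjoint operator bounded below this is equivalent to having compact resolvent, i.e.\ the spectrum of $A$ consists of isolated eigenvalues of finite multiplicity with $+\infty$ as the only accumulation point. The only non-elementary ingredient is the min-max dichotomy relating $\kappa_n$ to $\inf\sigma_{\mathrm{ess}}(A)$, which I would cite from~\cite[Chapter 4]{Davies95}; the monotonicity and the final equivalence are routine.
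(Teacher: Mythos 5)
Your argument is correct and is essentially the paper's own proof: the paper simply cites the min-max dichotomy and the equivalence between $\kappa_n\to\infty$, empty essential spectrum, and compact resolvent from~\cite[Theorems 4.5.2--4.5.3]{Davies95}, which is exactly the machinery you spell out. The added details (monotonicity of $\kappa_n$, the contrapositive via $\Sigma=\inf\sigma_{\mathrm{ess}}(A)$) are accurate and fill in what the paper leaves to the reference.
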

\begin{proof}
This follows from~\cite[Theorem 4.5.3]{Davies95} and the converse of~\cite[Theorem 4.5.2]{Davies95}.
\end{proof}

%The proof of the next lemma will make use of the following so--called \textit{uniform Poincar\'e inequality}. 
%
%\begin{defn}
%The Dirichlet form $(\EKa,\Da)$ on $L^2(\Ka,\mua)$ is said to sa\-tis\-fy the \textit{uniform Poincar\'e inequality} if and only if $\exists\;C_P>0$ such that for any $w\in\As$ and all $u\in\{u\in L^2(\Ka,\mua^w)\,\vert\;\exists\,v\in\Da\cap C(\Ka),\;u\equiv v\circ\Ga{w}\}$
%\[\EKa(u,u)\geq C_P\int_{\Ka}\abs{u-\overline{u}^{\mua^w}}^2d\mua^{w},\]
%where $\mua^w$ is the measure defined in Remark~\ref{rem: properties mua} (4) and $\overline{u}^{\mua^w}:=\int_{\Ka}u\,d\mua^w$.
%\end{defn}
%
%\medskip
%
%In our case, the uniform Poincar\'e inequality holds for the Dirichlet form $(\EKa, \Da)$ by~\cite[Proposition 4.4]{Kaj10} because $\FKa=\Da\subseteq C(\Ka)$ and $(\EKa,\FKa)$ is a resistance form whose associated resistance metric gives the same topology to $\Ka$ as the Euclidean one by Proposition~\ref{prop RcompE}.
\begin{lemma}\label{lem: Neumann eigenv Kam}
Let $m\geq 0$ and define for any subspace $L\subseteq \D_{\Kak{m}}$
\begin{align*}
\kappa(L)&:=\sup\left\{\E_{\Kak{m}}[u]~\vert~ u\in L,~\int_{\Kak{m}}\abs{u}^2d\mua=1\right\},\\
\kappa_n&:=\inf\{\kappa(L)~\vert~ L \text{ subspace of }\D_{\Kak{m}},~\dim L=n\}.
\end{align*}
Then, there exists a constant $C_U>0$ such that
\begin{equation}\label{eq: Neumann eigenv Kam}
\kappa_{3^m+1}\geq 5^m C_U.
\end{equation}
In particular, the non-negative self-adjoint operator on $L^2(\Kak{m})$ associated with $(\E_{\Kak{m}},\D_{\Kak{m}})$ has compact resolvent.
\end{lemma}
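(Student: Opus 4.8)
The plan is to exploit the block-diagonal structure $(\E_{\Kak{m}},\D_{\Kak{m}})=\bigoplus_{w\in\Am}(\E_{\Kak{w}},\D_{\Kak{w}})$ together with the variational quantities $\kappa_n$, and then to extract the geometric factor $5^m$ from the scaling of the form against the scaling of the measure. First I would reduce to a single cell. Since $L^2(\Kak{m})=\bigoplus_{w\in\Am}L^2(\Kak{w})$ and the energy splits accordingly, the associated operator is the orthogonal sum of the $H_{\Kak{w}}$, so its eigenvalues are the union, counted with multiplicity, of those of the summands. Each $(\E_{\Kak{w}},\F_{\Kak{w}})$ is a resistance form on the connected set $\Kak{w}$, hence by (R1) its kernel consists exactly of the constants and the bottom eigenvalue $0$ of $H_{\Kak{w}}$ is simple. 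As $\#\Am=3^m$, the value $0$ occurs with multiplicity exactly $3^m$, whence $\kappa_1=\dots=\kappa_{3^m}=0$ and, by the definition of $\kappa_n$,
\[\kappa_{3^m+1}=\min_{w\in\Am}\lambda_1\big(H_{\Kak{w}}\big).\]
Moreover every $\Ga{w}$, $w\in\Am$, is a similitude of ratio $((1-\alpha)/2)^m$ and each $\E_{\Kak{w}}$ arises from the identical, merely level-shifted, renormalisation; consequently $\lambda_1(H_{\Kak{w}})$ is independent of $w$, and it suffices to bound this common gap $\lambda_1^{(m)}$ from below.

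The heart of the argument is the scaling estimate $\lambda_1^{(m)}\ge 5^mC_U$. Writing $\tilde u:=u\circ\Ga{w}$ and using the identity $\E_{\Ka}[u]=\sum_{w\in\Am}\E_{\Kak{w}}[u_{\vert_{\Kak{w}}}]+\E_{\Jak{m}}[u_{\vert_{\Jak{m}}}]$ together with Corollary~\ref{cor scaling prop EKa}, the cell form $\E_{\Kak{w}}$ is, after transport by $\Ga{w}$, a copy of the resistance form on $\Ka$ built from the shifted data $d_{m+1},d_{m+2},\dots$ Because $d_n=\alpha((1-\alpha)/2)^{n-1}$ is summable, the renormalisation corrections are uniformly controlled: from $\rho_n^{d}=\prod_{i=1}^n\frac{3}{5+3d_i}$ one gets $(\rho_m^d)^{-1}=(5/3)^m\prod_{i=1}^m(1+\tfrac{3}{5}d_i)$, where the infinite product $\prod_{i\ge1}(1+\tfrac35 d_i)$ converges to a finite positive limit. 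Hence the cell energy is comparable, \emph{uniformly in} $m$ and $w$, to $(5/3)^m$ times a fixed reference energy of $\tilde u$.

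On the measure side, $\int_{\Kak{w}}u^2\,d\mua$ is comparable, again uniformly in $m$, to $3^{-m}$ times a fixed reference $L^2$-norm of $\tilde u$: the self-similar part $\muad$ scales by exactly $3^{-m}$, and the singular continuous part $\muac$ restricted to $\Kak{w}$ carries mass $\asymp(\beta(1-\alpha)/2)^m=o(3^{-m})$ precisely because of the choice of $\beta$ in~\eqref{eq: property of beta}. Dividing, the two geometric factors combine to $(5/3)^m\cdot 3^m=5^m$, and taking the infimum over $u$ orthogonal to the constants yields $\lambda_1^{(m)}\ge 5^mC_U$ with $C_U>0$; its positivity comes from the reference form being uniformly comparable to $(\EKa,\Da)$, whose Neumann spectrum is discrete with an isolated $0$ by Theorem~\ref{thm: Spec of Lap}, so the reference gap is strictly positive. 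The hard part is exactly this step: unlike the self-similar case the discrete and continuous parts of $\E_{\Kak{w}}$ scale by different factors, so one cannot equate the cell form with a constant multiple of the base form, and the whole estimate rests on keeping all comparison constants \emph{independent of the level} $m$ — which is what the summability of $(d_n)_n$ and the choice of $\beta$ secure.

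Finally, the compact resolvent follows from the inequality applied at every level. Refining the decomposition inside $\Kak{m}$ down to level $m+j$ and using the Neumann bracketing of~\cite[Proposition 4.2, Lemma 4.2]{Lap91} exactly as in Lemma~\ref{lem: Neumann upper +decomp}, the number of eigenvalues of $H_{\Kak{m}}$ below $5^{m+j}C_U$ is at most $3^{\,j}$ plus the finite contribution of the one-dimensional edge operators; since $5^{m+j}C_U\to\infty$ as $j\to\infty$, the sequence $\{\kappa_n\}_{n\ge1}$ is unbounded and Lemma~\ref{lem: cpt resolvent Davies} yields compact resolvent.
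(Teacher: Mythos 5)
Your overall strategy is viable but genuinely different from the paper's, and it contains one real gap at its central step. The paper does not pass through a reference eigenvalue problem at all: it adjoins the $3^m$-dimensional space $L_0$ of cell-wise constant functions to an arbitrary $(3^m+1)$-dimensional $L$, takes the eigenfunction $u_A\perp L_0$ of the finite-dimensional problem on $L_0+L$, and bounds $\E_{\Kak{m}}[u_A]$ cell by cell using the scaling of Corollary~\ref{cor scaling prop EKa} together with the resistance estimate $\abs{u(x)-u(y)}^2\le R(x,y)\,\E_{\Ka}[u]$ and the observation that a function with zero $\mua$-mean on a cell takes both signs there, so its sup on the cell dominates its $L^2$-average; the only constants needed are $\diam_R(\Ka)<\infty$ and $3^m\mua(\Kak{w})<1$. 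Your reduction of $\kappa_{3^m+1}$ to the smallest nonzero cell eigenvalue is fine (the min--max values of an orthogonal sum are the merged ordered values of the summands, and each cell form has kernel exactly the constants by (R1)), and your treatment of the compact resolvent at the end is actually more explicit than the paper's.

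The gap is in the step ``taking the infimum over $u$ orthogonal to the constants yields $\lambda_1^{(m)}\ge 5^mC_U$.'' Transporting the cell problem by $\Ga{w}$, the form side is indeed uniformly comparable to $(5/3)^m$ times something dominating a fixed multiple of $\EKa$ (summability of $d_n$ controls $\prod_i(1+\tfrac35 d_i)$, and the continuous part only grows). But the transported measure is $3^{-m}$ times an \emph{$m$-dependent} mixture of the form $\muad+\varepsilon_m\,\mu^c_{\alpha,\beta}$ with $\varepsilon_m=(3\beta(1-\alpha)/2)^m\to0$. Consequently (i) your claim that the cell $L^2$-norm is uniformly comparable to $3^{-m}$ times a \emph{fixed} reference norm is false in the two-sided sense --- you yourself note the continuous part carries mass $o(3^{-m})$ --- and (ii) ``orthogonal to the constants'' is a measure-dependent condition, so a function with zero mean in $L^2(\Kak{w},\mua)$ does not transport to a function with zero $\mua$-mean on $\Ka$, and you cannot simply divide your two comparabilities and quote the spectral gap of $(\EKa,L^2(\Ka,\mua))$. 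The step can be repaired: use the variational identity
\begin{equation*}
\lambda_1(\E,\nu)^{-1}=\sup_{u}\frac{\inf_{c\in\R}\int (u-c)^2\,d\nu}{\E[u]}
\end{equation*}
together with the one-sided bound $\nu_m\le C\,\mua$, which gives $\inf_c\int(u-c)^2d\nu_m\le C\inf_c\int(u-c)^2d\mua\le C\,\lambda_1(\EKa,\mua)^{-1}\EKa[u]$, and then $\lambda_1(\EKa,\mua)>0$ follows from Theorem~\ref{thm: Spec of Lap} and (R1). As written, however, the uniformity in $m$ of the reference gap --- the whole content of the estimate --- is asserted rather than proven.
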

\begin{proof}
The last assertion follows from Lemma~\ref{lem: cpt resolvent Davies} in view of inequality~\eqref{eq: Neumann eigenv Kam}. The proof of this inequality follows the lines of~\cite[Lemma 4.5]{Kaj10} and we include all details for completeness.
%Define
%\begin{align*}
%\widetilde{\D}_{\Kak{m}}&:=\left\{ u\in L^2(\Kak{m},\mua\vert_{\Kak{m}})\;\big\vert\begin{array}{c}
%\exists\, v\in\D_{\Kak{m}},\,u\circ\Ga{w}=v\circ\Ga{w}\\ \text{ on } \Kak{m}\;\forall\,w\in\Am
%\end{array}  \right\}, \\
%\\
%\widetilde{\E}_{\Kak{m}}[u]&:=\sum_{w\in\Am}\left(\frac{5}{3}\right)^m\EKa^d[u\circ\Gaw]+\left(\frac{2}{1-\alpha}\right)^m\EKa^c[u\circ\Ga{w}],
%\end{align*}
%and
%\begin{align*}
%\tilde{\kappa}(L)&:=\sup\left\{\widetilde{\E}_{\Kak{m}}[u]\;\vert\,u\in L,\,\int_{\Kak{m}}\abs{u}^2=1\right\},\quad L\subseteq %\widetilde{\D}_{\Kak{m}}\text{ subspace},\\
%\tilde{\kappa}_n&:=\inf\{\tilde{\kappa}(L)\;\vert\,L \text{ subspace of }\widetilde{\D}_{\Kak{m}},\,\dim L=n\}.
%\end{align*}
%It is clear that $\D_{\Kak{m}}\subseteq\widetilde{\D}_{\Kak{m}}$ and by Corollary~\ref{cor: scaling prop EKa}, $\widetilde{\E}_{\Kak{m}}$ coincides with $\E_{\Kak{m}}$ on $\D_{\Kak{m}}$. Hence the minimax principle implies that $\kappa_n\geq\tilde{\kappa}_n$ for all $n\in\N_0$.

Let us consider $L_0:=\{\sum_{w\in\Am}a_w\,{1}_{\Kak{m}}~\vert~ a_w\in\R\}$, which is a $3^m$-dimensional subspace of $\D_{\Kak{m}}$ such that $\E_{\Kak{m}}\vert_{L_0\times L_0}\equiv 0$. Now, take a $(3^m+1)$-dimensional subspace $L\subseteq\D_{\Kak{m}}$ and set $\widetilde{L}:=L_0+L$.
The bilinear form $\E_{\Kak{m}}$ on $\widetilde{L}$ is associated with a non-negative self-adjoint operator $A$ satisfying $\E_{\Kak{m}}(u,v)=\int_{\Kak{m}}(Au)v\,d\mua$ for all $u,v\in\widetilde{L}$.

By the theory of finite-dimensional real symmetric matrices, the $(3^m+1)$-th smallest eigenvalue of $A$ is given by
\[\kappa_A:=\inf\{\kappa(L')~\vert~ L'\text{ subspace of }\widetilde{L},~\dim L'=3^m+1\}.\]
Let $u_A\in\widetilde{L}$ be the eigenfunction corresponding to the eigenvalue $\kappa_A$ and normalize it so that $\int_{\Kak{m}}\abs{u_A}^2d\mua=1$. Note that this function is orthogonal to $L_0$, hence $u_A^+\neq 0\neq u_A^-$. Consider $x,y\in\Ka$ such that $u_A(\Gaw(x)):=\max u_A^+$ and $u_A(\Gaw(y)):=\min u_A^-$. Then, $\abs{u_A(\Gaw(x))-u_A(\Gaw(y))}\geq\abs{u_A(\Gaw(x))}$ and using Corollary~\ref{cor scaling prop EKa} we have that
%Now, since $\frac{3}{5}<\frac{2}{1-\alpha}$ and $3^m\mua(\Kak{w})<1$ for all $w\in\Am$, we have that
%%%%%%%%%%% last equality cut
\begin{align*}
\kappa(L)&\geq \kappa_A=\kappa_A\int_{\Kak{m}}\abs{u_A}^2d\mua=\E_{\Kak{m}}[u_A]\\
%&=\sum_{w\in\Am}\left(\frac{5}{3}\right)^m\EKa^d[u_A\circ\Gaw]+\left(\frac{2}{1-\alpha}\right)^m\EKa^c[u_A\circ\Ga{w}]\\
&\geq\sum_{w\in\Am}\left(\frac{5}{3}\right)^m\EKa[u_A\circ\Gaw]\\
&\geq\left(\frac{5}{3}\right)^m\sum_{w\in\Am}\frac{\abs{u_A\circ\Gaw(x)-u_A\circ\Gaw(y)}^2}{R(x,y)}\\
&\geq\left(\frac{5}{3}\right)^m\sum_{w\in\Am}\frac{\abs{u_A(\Gaw(x))}^2}{\diam_{R}(\Ka)}\\
&\geq\frac{5^m}{3^m\diam_R(\Ka)}\sum_{w\in\Am}\frac{1}{\mua(\Kak{w})}\int_{\Kak{w}}\abs{u_A(x)}^2d\mua\\
%&\geq \left(\frac{5}{3}\right)^m\sum_{w\in\Am} C_P\int_{\Ka}\abs{u_A\circ\Ga{w}}^2d\mua^w\\
%&=\left(\frac{5}{3}\right)^m\sum_{w\in\Am}\frac{ C_P}{\mua(\Kak{w})}\int_{\Kak{w}}\abs{u_A}^2d\mua\\
&\geq 5^mC_P\int_{\Kak{w}}\abs{u_A}^2d\mua=5^mC_P,
\end{align*}
with $C_P=(\diam_R(\Ka))^{-1}$ is the inverse of the diameter of $\Ka$ with respect to the resistance metric. Last inequality holds because $3^m\mua(\Kak{w})<1$ for all $w\in\A^m$.
It follows that $\kappa_{3^m+1}\geq 5^mC_P$, as we wanted to prove.
\end{proof}
\begin{prop}\label{prop: upper bound}
There exist $C_{\alpha,2},C_{\alpha,\beta,2}>0$ depending on $\alpha$ and $\beta$, and $x_0 >0$ such that 
\begin{equation}\label{eq: upper bound}
N_N(x)\leq C_{\alpha,2}\,x^{\frac{\ln 3}{\ln 5}}+C_{\alpha,\beta, 2}\,x^{1/2}
\end{equation}
for all $x\geq x_0$.
\end{prop}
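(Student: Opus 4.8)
The plan is to combine the decomposition of Lemma~\ref{lem: Neumann upper +decomp} with the two spectral estimates of Lemmas~\ref{lem: Jam upper} and~\ref{lem: Neumann eigenv Kam}, the crucial idea being to let the decomposition level $m$ grow with $x$. Given $x\geq x_0$, I would choose $m=m(x)$ to be the smallest integer with $x<5^mC_U$, so that $5^{m-1}C_U\leq x<5^mC_U$ and hence $m\asymp\log_5 x$. With this choice the inequality $N_N(x)\leq N_{\Kak{m}}(x)+N_{\Jak{m}}(x)$ reduces the problem to bounding the ``cell'' part $N_{\Kak{m}}$ and the ``segment'' part $N_{\Jak{m}}$ separately: the $x^{1/2}$ term of~\eqref{eq: upper bound} should come from the one-dimensional Weyl behaviour of the segments, while the $x^{\ln 3/\ln 5}$ term should come from the number of cells and, as it turns out, also from the endpoint corrections of the segment count.

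For the cell part, Lemma~\ref{lem: Neumann eigenv Kam} gives $\kappa_{3^m+1}\geq 5^mC_U$. Since $\kappa_{3^m+1}$ is the $(3^m+1)$-th Neumann eigenvalue of $H_{\Kak{m}}$ and, by the choice of $m$, $5^mC_U>x$, at most $3^m$ eigenvalues can lie below $x$; that is, $N_{\Kak{m}}(x)\leq 3^m$. Using $5^{m-1}C_U\leq x$ and the identity $3=5^{\ln 3/\ln 5}$ one then rewrites
\[
3^m=3\cdot 3^{m-1}=3\,\bigl(5^{m-1}\bigr)^{\ln 3/\ln 5}\leq 3\,(x/C_U)^{\ln 3/\ln 5}=:C_{\alpha,2}\,x^{\ln 3/\ln 5},
\]
which is precisely the first term of~\eqref{eq: upper bound}.

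The segment part is where the main obstacle lies. One is tempted to simply invoke Lemma~\ref{lem: Jam upper} with this $m$, but that bound is stated for \emph{fixed} $m$ with an $o(x^{1/2})$ error that hides the finitely many intervals, whereas here $m=m(x)\to\infty$, so the estimate must be made uniform in $m$. To this end I would re-examine the proof of Lemma~\ref{lem: Jam upper} and replace the Weyl asymptotics by the exact count, namely that a single interval $e$ of length $\lambda(e)$ with Neumann conditions satisfies $N_e(y)\leq \tfrac{\lambda(e)}{\pi}\sqrt{y}+1$. Inserting this into~\eqref{eq: Ja asymp II} and recalling that $\Jake{k}\setminus\Jake{k-1}$ consists of $3^k$ segments each of length $\alpha(\tfrac{1-\alpha}{2})^{k-1}$ gives
\[
N_{\Jak{m}}(x)\leq\frac{\alpha}{\pi}\sqrt{x}\sum_{k=1}^{m}3^k\Bigl(\tfrac{1-\alpha}{2}\Bigr)^{k-1}\bigl(\rho_{\alpha,k}^c\bigr)^{1/2}\beta^{k/2}+\sum_{k=1}^{m}3^k.
\]
The first sum converges as $m\to\infty$ by the choice~\eqref{eq: property of beta} of $\beta$, producing the $C_{\alpha,\beta,2}\,x^{1/2}$ term. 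The second, ``$+1$'', sum equals $\tfrac{3}{2}(3^m-1)\lesssim 3^m\asymp x^{\ln 3/\ln 5}$, and is therefore absorbed (after enlarging $C_{\alpha,2}$) into the first term of~\eqref{eq: upper bound}. Combining the two pieces yields~\eqref{eq: upper bound} for all $x\geq x_0$. The delicate point to get right is exactly this bookkeeping: the accumulation of the $3^m$ endpoint corrections in $N_{\Jak{m}}$ is of the same order $x^{\ln 3/\ln 5}$ as the cell contribution, so it cannot be discarded as a lower-order error but must be folded into the leading constant.
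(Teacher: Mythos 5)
Your argument follows the same route as the paper's: the same choice of $m=m(x)$ with $5^{m-1}C_U\leq x<5^{m}C_U$, the same use of Lemma~\ref{lem: Neumann eigenv Kam} to get $N_{\Kak{m}}(x)\leq 3^{m}\leq 3\,(x/C_U)^{\ln 3/\ln 5}$, and the same decomposition $N_N(x)\leq N_{\Kak{m}}(x)+N_{\Jak{m}}(x)$ from Lemma~\ref{lem: Neumann upper +decomp}. Where you diverge is the segment count, and there your extra care is not pedantry but a genuine repair: the paper simply cites Lemma~\ref{lem: Jam upper} with $m=m(x)$, yet that lemma's bound $N_{\Jak{m}}(x)\leq C_{\alpha,\beta,2}\,x^{1/2}$ cannot hold uniformly in $m$, since each of the $\tfrac{3}{2}(3^{m}-1)$ Neumann intervals contributes its zero eigenvalue, so that $N_{\Jak{m}}(x)\geq\tfrac{3}{2}(3^{m}-1)\asymp x^{\ln 3/\ln 5}$ for the $m$ chosen here --- an order strictly larger than $x^{1/2}$. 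Your replacement of the Weyl asymptotics by the exact count $N_e(y)\leq\frac{\lambda(e)}{\pi}\sqrt{y}+1$, together with the observation that the accumulated $+1$'s sum to order $3^{m}\asymp x^{\ln 3/\ln 5}$ and must therefore be folded into the \emph{first} term of~\eqref{eq: upper bound} rather than discarded as lower order, is exactly what is needed to make the argument airtight; the final estimate and the exponents are unaffected.
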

\begin{proof}
Let $x_0>C_P$ and $x\geq x_0$. Then we can choose $m\in\N$ such that $C_P5^{m-1}\leq x< C_P5^m$. From Lemma~\ref{lem: Neumann eigenv Kam} we know that 
\begin{equation*}
\kappa_{3^m+1}\geq 5^m C_P> x,
\end{equation*}
hence $N_{\Kak{m}}(x)\leq 3^{m}\leq C_{\alpha,2}\,x^{\frac{\ln 3}{\ln 5}}$, where $C_{\alpha,2}:=3C_P^{-\frac{\ln 3}{\ln 5}}$. Lemma~\ref{lem: Neumann upper +decomp} and Lemma~\ref{lem: Jam upper} lead to~\eqref{eq: upper bound}.
%%%%%%% this eq cut
%\[N_N(x)\leq  C_{\alpha,2}x^{\frac{\ln 3}{\ln 5}}+C_{\alpha,\beta,2}x^{1/2},\]
%as we wanted to prove.
\end{proof}
\subsubsection*{Lower bound}
We recall the definition of the \textit{part of a Dirichlet form}: For any non-empty set $U\subseteq\Ka$, the pair $(\E_U,\D_U)$ given by
\begin{align}\label{defn: part of DF}
\D_U&:=\overline{\mathcal{C}}_U,\quad \mathcal{C}_U:=\{u\in\Da~\vert~ \supp(u)\subseteq U\},\nonumber\\
\E_U&:=\EKa{}_{\vert_{U\times U}},
\end{align}
where the closure is taken with respect to $\norm{\cdot}_{\E_{\Ka,1}}$ is called the part of the Dirichlet form $(\EKa,\Da)$ \textit{on} $U$.

\medskip

Let us write $\Kao:=\Ka\setminus V_0$ and $\Kak{w}^0:=\Ga{w}(\Kao)$ for any $w\in\As$ and set $\Kak{m}^0:=\bigcup\limits_{w\in\Am}\Kak{w}^0$. We consider the Dirichlet forms $(\E_{\Kak{w}^0},\D_{\Kak{w}^0})$ and $(\E_{\Jak{m}},\D_{\Jak{m}})$. 

\begin{lemma}\label{lem: lower +decomp}
For any $m\geq 1$ and $w\in~\Am$, the operators $H_{\Kak{w}^0}$ and $H_{\Kak{m}^0\cup\Jak{m}}$ have compact resolvent and for any $x>0$ we have that
\begin{equation}\label{eq: lower +decomp}
\sum_{w\in\Am}N_{\Kak{w}^0}(x)+N_{\Jak{m}}(x)=N_{\Kak{m}^0\cup\Jak{m}}(x)\leq N_D(x).
\end{equation}
\end{lemma}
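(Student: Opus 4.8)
The plan is to mirror the upper-bound argument of Lemma~\ref{lem: Neumann upper +decomp}, but to replace the \emph{enlargement} of the domain there by a \emph{restriction} of the domain via the part of a Dirichlet form, so that Dirichlet--Neumann bracketing now yields a lower bound instead of an upper one. Set $U:=\Kak{m}^0\cup\Jak{m}$ and observe that $U=\Ka\setminus \Wa{m}$, where $\Wa{m}=\bigcup_{w\in\Am}\Ga{w}(V_0)$ is the finite set of junction points of the level-$m$ cells; in particular $V_0\subseteq \Wa{m}$, so $U\subseteq\Kao$. I would then work with the part $(\E_U,\D_U)$ of $(\EKa,\Da)$ on $U$, as defined in~\eqref{defn: part of DF}, and identify $N_{\Kak{m}^0\cup\Jak{m}}(x)$ with $N(x;\E_U,\D_U)$.

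First I would establish the inequality $N_{\Kak{m}^0\cup\Jak{m}}(x)\le N_D(x)$. Every $u\in\mathcal{C}_U$ has support contained in $U$, hence vanishes on $\Wa{m}$ and in particular on $V_0$, so $\mathcal{C}_U\subseteq\Dao$; since $\Dao$ is closed with respect to $\norm{\cdot}_{\E_{\Ka,1}}$, taking closures gives $\D_U\subseteq\Dao$. As $\E_U=\EKa\vert_{U\times U}$ agrees with $\EKao$ on this smaller domain, monotonicity of the min--max values under restriction of the domain (equivalently~\cite[Lemma 4.2]{Lap91}) yields $N(x;\E_U,\D_U)\le N(x;\EKao,\Dao)=N_D(x)$.

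Next I would prove the central equality by decoupling $(\E_U,\D_U)$ across the pieces of $U$. Since $\Kak{m}\cap\Jak{m}=\emptyset$ and distinct cells meet only inside $\Wa{m}$, one has the orthogonal splitting $L^2(U)=\bigoplus_{w\in\Am}L^2(\Kak{w}^0)\oplus L^2(\Jak{m})$. A function supported in $U$ restricts on each cell to a function supported in $\Kak{w}^0$ and on $\Jak{m}$ to one supported away from the segment endpoints, and conversely such pieces glue together by extension by zero to an element of $\mathcal{C}_U$; combined with the block-diagonal energy identity $\EKa[u]=\sum_{w\in\Am}\E_{\Kak{w}}[u\vert_{\Kak{w}}]+\E_{\Jak{m}}[u\vert_{\Jak{m}}]$ from the preceding Remark, this identifies $(\E_U,\D_U)$ with $\bigoplus_{w\in\Am}(\E_{\Kak{w}^0},\D_{\Kak{w}^0})\oplus(\E_{\Jak{m}},\D_{\Jak{m}})$. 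The associated operator is then the orthogonal direct sum of the $H_{\Kak{w}^0}$ and $H_{\Jak{m}}$, whence the counting functions add to give $N_{\Kak{m}^0\cup\Jak{m}}(x)=\sum_{w\in\Am}N_{\Kak{w}^0}(x)+N_{\Jak{m}}(x)$. The main obstacle is precisely this decoupling: one must check that the closure $\overline{\mathcal{C}_U}$ respects the block structure \emph{exactly}, so that the glueing/restriction map is an isometry for $\norm{\cdot}_{\E_{\Ka,1}}$ onto the direct sum and no residual off-diagonal coupling survives; the orthogonality of the $L^2$-splitting together with the block-diagonal energy identity is what makes this work.

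Finally, for the compact resolvent statements, $H_{\Jak{m}}$ has compact resolvent by Lemma~\ref{lem: Jam upper}. For each $w\in\Am$ we have $\D_{\Kak{w}^0}\subseteq\D_{\Kak{w}}$, so the min--max values of $H_{\Kak{w}^0}$ dominate those of the Neumann operator $H_{\Kak{w}}$; the latter is a summand of $H_{\Kak{m}}=\bigoplus_{w\in\Am}H_{\Kak{w}}$ from~\eqref{def E_Kam}, which has compact resolvent by Lemma~\ref{lem: Neumann eigenv Kam}, so the min--max values of $H_{\Kak{w}}$, and hence of $H_{\Kak{w}^0}$, are unbounded, and Lemma~\ref{lem: cpt resolvent Davies} gives compact resolvent for each $H_{\Kak{w}^0}$. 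Being a finite orthogonal direct sum of operators with compact resolvent, $H_{\Kak{m}^0\cup\Jak{m}}$ has compact resolvent as well, completing the proof.
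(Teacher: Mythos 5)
Your proposal is correct and follows essentially the same route as the paper: the inequality comes from $\D_U\subseteq\Dao$ together with domain monotonicity of the min--max values, and the central equality from the orthogonal decoupling of the part form $(\E_U,\D_U)$ into the cell and segment blocks (the paper carries out the gluing step you flag as the main obstacle by producing cutoff functions $\varphi,\psi\in\Da$ via~\cite[Exercise 1.4.1]{FOT11}, so that $u\cdot 1_{\Jak{m}}=u\varphi$ and $u\cdot 1_{\Kak{m}}=u\psi$ remain in the domain). The only minor divergence is the compact-resolvent claim, which the paper deduces at one stroke from the compact resolvent of $H_{\Kao}$ (Theorem~\ref{thm: Spec of Lap}) and the min--max principle, rather than by comparison with the Neumann cell operators as you do; both arguments are valid.
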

\begin{proof}
Note that by definition, $\D_U\subseteq\D_{\Kao}$ and $\E_U=\EKao\vert_{U\times U}$ for both $U\in\{\D_{\Kak{m}^0},\D_{\Kak{m}^0\cup\Jak{m}}\}$ and any $m\in\N$. Since $H_{\Kao}$ has compact resolvent by Theorem~\ref{thm: Spec of Lap}, the minimax principle implies that the operators $H_{\Kak{w}^0}$ and $H_{\Kak{m}^0\cup\Jak{m}}$ also have compact resolvent and the inequality in~\eqref{eq: lower +decomp} holds.

The equality 
\begin{equation}\label{eq: lower 1.decomp}
N_{\Kak{m}^0\cup\Jak{m}}(x)=N_{\Jak{m}}(x)+N_{\Kak{m}^0}(x).
\end{equation}
follows by the same argumentation as in~\cite[Lemma 4.8]{Kaj10}: Let $u\in\D_{\Jak{m}}$. Since $\Ka\setminus\Jak{m}\subseteq\Kak{m}$, $L_{\alpha}:=\supp_{\Ka}(u)\cap\Ka\subseteq\Jak{m}$ and therefore $u\cdot 1_{\Jak{m}}\in C(\Ka)$ and $\supp_{\Ka}(u\cdot 1_{\Jak{m}})\subseteq\Jak{m}$. Since $L_{\alpha}$ is compact and $\Jak{m}$ is open, we know by~\cite[Exercise 1.4.1]{FOT11} that we can find a function $\varphi\in\Da$ such that $\varphi\geq 0$, $\varphi_{\vert_{L_{\alpha}}}\equiv 1$ and $\varphi_{\vert_{\Kak{m}}}\equiv 0$. Then, $u\cdot 1_{\Jak{m}}=u\varphi\in\D_{\Jak{m}}$ and $u\cdot 1_{\Jak{m}}\in\mathcal{C}_{\Jak{m}}$ (recall definition in~\eqref{defn: part of DF}).

Similarly, if $u\in\D_{\Kak{m}^0}$ and $\widetilde{L}_{\alpha}:=\supp_{\Ka}(u)\cap\Ka\subseteq \Kak{m}^0$, we can find $\psi\in\Da$ such that $\psi\geq 0$, $\psi_{\vert_{\widetilde{L}_{\alpha}}}\equiv 1$ and $\psi\vert_{\Jak{m}}\equiv 0$. Thus $u\cdot 1_{\Kak{m}}=u\psi\in\D_{\Kak{m}^0}$ and we have that $\mathcal{C}_{\Kak{m}^0\cup\Jak{m}}=\mathcal{C}_{\Kak{m}}\oplus\mathcal{C}_{\Jak{m}^0}$, both spaces being orthogonal to each other with respect to $\EKa$ and the inner pro\-duct of $L^2(\Ka,\mua)$. Taking the closure with respect to $\E_{\Ka,1}$ we get that $\D_{\Kak{m}^0\cup\Jak{m}}=\D_{\Kak{m}^0}\oplus\D_{\Jak{m}}$, where both spaces keep being orthogonal to each other. Hence~\eqref{eq: lower 1.decomp} follows.

The equality
\begin{equation*}\label{eq: lower 2.decomp}
N_{\Kak{m}^0}(x)=\sum_{w\in\Am}N_{\Kak{w}^0}(x)
\end{equation*}
follows by an analogous argument and the inequality~\eqref{eq: lower +decomp} is therefore proved.
%In this case we argue as above: for any $w\in\Am$ and $u\in\D_{\Kak{w}^0}$, since $\Kak{m}\setminus\Kak{w}^0=\Ga{w}(V_0)\cup\bigcup_{w'\in\Am\setminus\{w\}}\Kak{w}$, we have that $L'_{\alpha}:=\Kak{w}\cap\supp_{\Ka}(u)\subseteq\Kak{m}^0$ and $\supp_{\Ka}(u\cdot 1_{\Kak{w}^0})\subseteq\Kak{w}^0$. Again, $L'_{\alpha}$ is compact and $\Kak{w}^0$ is open, so we find $\varphi_w\in \Da$ such that $\varphi_w\geq 0$, $\varphi_w\vert_{L'_{\alpha}}\equiv 1$ and $\varphi_w\vert_{\Kak{m}\setminus\Kak{w}^0}\equiv 0$. Then, $u\cdot 1_{\Kak{w}^0}=u\varphi_w\in\D_{\Kak{m}^0}$, hence $u\cdot 1_{\Kak{w}^0}\in\mathcal{C}_{\Kak{w}^0}$ and $\mathcal{C}_{\Kak{m}^0}=\bigoplus_{w\in\Am}\mathcal{C}_{\Kak{w}^0}$, where $\mathcal{C}_{\Kak{w}^0}$ are orthogonal to each other with respect to both $\E_{\Kak{m}}$ and the inner product of $L^2(\Kak{m},\mua\vert_{\Kak{m}})$. Taking the closure with respect to $\E_{\Ka,1}$ we get $\D_{\Kak{m}^0}=\bigoplus_{w\in\Am}\D_{\Kak{w}^0}$, with all $\D_{\Kak{w}^0}$ again orthogonal to each other so~\eqref{eq: lower 2.decomp} follows.
\end{proof}

\begin{lemma}\label{lem: Jam lower}
There exists a constant $C_{\alpha,\beta,1}>0$ depending on $\alpha$ and $\beta$, and $x_0>0$ such that
\begin{equation}\label{eq: Jam upper}
C_{\alpha,\beta,1}x^{1/2}\leq N_{\Jak{m}}(x)
\end{equation}
for all $x\geq x_0$, where $N_{\Jak{m}}(x):=N(x;\E_{\Jak{m}},\D_{\Jak{m}})$.
\end{lemma}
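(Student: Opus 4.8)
The plan is to extract the lower bound from the very same computation that produced the upper bound in Lemma~\ref{lem: Jam upper}. Recall that there we established the exact identity~\eqref{eq: Ja asymp II},
\[
N_{\Jak{m}}(x)=\sum_{k=1}^m\sum_{e\in\Jake{k}\setminus\Jake{k-1}}N_e\bigl(\rho_{\alpha,k}^c\beta^k x\bigr),
\]
where $N_e$ denotes the classical Neumann eigenvalue counting function of $-\Delta$ on the interval $e$. Since every summand is non-negative, I would simply discard all levels $k\geq 2$ and keep only the first-level contribution, obtaining
\[
N_{\Jak{m}}(x)\geq\sum_{e\in\Jake{1}}N_e\bigl(\rho_{\alpha,1}^c\beta x\bigr)
\qquad\text{for every }m\geq 1.
\]
The decisive point is that the right-hand side no longer depends on $m$: $\Jake{1}$ consists of the three fixed segments $e_1,e_2,e_3$, each of length $\alpha$, and $\rho_{\alpha,1}^c=\tfrac{3\alpha}{5+3\alpha}>0$.

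Next I would apply Weyl's theorem, in the form already cited in Lemma~\ref{lem: Jam upper}, to this fixed finite union of three intervals. With $y:=\rho_{\alpha,1}^c\beta x$ this gives
\[
\sum_{e\in\Jake{1}}N_e(y)=\frac{3\alpha}{\pi}\,y^{1/2}+o\bigl(y^{1/2}\bigr)
=\frac{3\alpha}{\pi}\bigl(\rho_{\alpha,1}^c\beta\bigr)^{1/2}x^{1/2}+o\bigl(x^{1/2}\bigr)
\qquad\text{as }x\to\infty.
\]
Because this asymptotic concerns the single fixed three-interval problem, the error term $o(x^{1/2})$ is independent of $m$; this is precisely what makes the eventual bound uniform in $m$.

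Finally, setting
\[
C_{\alpha,\beta,1}:=\frac{3\alpha}{2\pi}\bigl(\rho_{\alpha,1}^c\beta\bigr)^{1/2}>0,
\]
the displayed asymptotic furnishes a threshold $x_0>0$, independent of $m$, beyond which the leading term dominates the error, so that $N_{\Jak{m}}(x)\geq C_{\alpha,\beta,1}x^{1/2}$ for all $x\geq x_0$ and all $m\geq 1$, which is the claim. The only subtlety — and the reason for keeping merely the first level rather than summing the full asymptotic as in the upper bound — is exactly this uniformity in $m$: summing over all $k$ up to $m$ would produce an error term whose implicit constant grows with the number $\tfrac{3}{2}(3^m-1)$ of intervals, which is useless when $m$ is later coupled to $x$ in Lemma~\ref{lem: lower +decomp}. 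Restricting to $k=1$ sidesteps that difficulty entirely, and I expect this to be the main point requiring care in writing up the argument.
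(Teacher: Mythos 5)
Your argument---restrict to the level-one segments, whose number and lengths are fixed, and apply Weyl's law to that single three-interval problem---is essentially the mechanism behind the paper's own (one-line) proof: the constant $\frac{3\alpha}{\pi}\beta^{1/2}$ given there is precisely the level-one contribution. Your explicit observation that discarding the levels $k\geq 2$ is what makes the bound uniform in $m$ (which matters because $m$ is later coupled to $x$ in Proposition~\ref{prop: lower bound}) is correct and is a point the paper leaves implicit.

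There is, however, one genuine oversight. In this lemma $(\E_{\Jak{m}},\D_{\Jak{m}})$ is no longer the form of~\eqref{def E_Jam} with domain $\bigoplus_{e}H^1(e,dx)$: it is the \emph{part} of $(\EKa,\Da)$ on the open set $\Jak{m}$ in the sense of~\eqref{defn: part of DF}, as required for the orthogonal decomposition in Lemma~\ref{lem: lower +decomp}, and its domain is identified with $\bigoplus_{e\in\Jake{m}}H^1_0(e,dx)$. Consequently the identity~\eqref{eq: Ja asymp II} that you quote, in which $N_e$ is the \emph{Neumann} counting function of the interval $e$, is not the identity satisfied by the present $N_{\Jak{m}}$; the correct one has the Dirichlet counting functions $N_e^D$ in place of $N_e$. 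Since $N_e^D\leq N_e$, a lower bound for the Neumann quantity does not yield a lower bound for the Dirichlet one, so as written your first display bounds the wrong object from below. The repair is immediate: rerun the eigenvalue correspondence from the proof of Lemma~\ref{lem: Jam upper} with test functions $h\in H^1_0(e,dx)$, which produces the same identity with $N_e^D$, and note that $N_e^D(y)=\lfloor\lambda(e)y^{1/2}/\pi\rfloor$ has the same Weyl leading term $\frac{\lambda(e)}{\pi}y^{1/2}$. With that substitution the remainder of your argument, including the choice of $C_{\alpha,\beta,1}$ and the uniformity in $m$, goes through unchanged.
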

\begin{proof}
Note that in this case $\D_{\Jak{m}}$ can be identified with $\bigoplus\limits_{e\in\J_{\alpha,m}}H^1_0(e,dx)$. The proof is analogous to Lemma~\ref{lem: Jam upper} with $C_{\alpha,\beta,1}=\frac{3\alpha}{\pi}\beta^{1/2}$.
\end{proof}

The proof of the next lemma will make use of the following identification mapping: Let $\{\R^2;S_i,i=1,2,3\}$ be the IFS associated with the Sierpi\'nski gasket $K$ and $\Vs=\bigcup\limits_{n\in\N_0}\bigcup\limits_{w\in\An}S_w(V_0)$. 

\medskip

Recall the IFS $\{\R^2;\Gai,i=1,\ldots,6\}$ associated with $\Ka$ and consider the set $\Was$ defined in~\eqref{eq def Vas}. For any $x\in\Was$ there exists a word $w^x\in\As$ such that $x=\Ga{w^x}(p_i)$ for some $p_i\in V_0$, so we can define
\begin{align*}
\mathcal{I}\colon& \Was\longrightarrow \Vs\\
&\hspace*{0.3cm}x\hspace*{0.3cm}\longmapsto S_{w^x}(p_i).
\end{align*}
This mapping allows us to construct functions in $\Da$ from functions in the domain of the classical Dirichlet form $(\E_K,\D_K)$ on $K$ (see e.g.\cite{KL93} for definitions and details about this form). 

For any function $u\in\D_K$, we define the function $u_{\alpha}\colon \Vas\to\R$ by
\begin{equation}\label{eq: Def ualpha}
u_{\alpha}(x):=\left\{\begin{array}{rl}
									u\circ\mathcal{I}(x),&x\in\Was,\\
									u\circ\mathcal{I}(a_e),&x\in [a_e,b_e],\; e\in\Jae,
\end{array}\right.
\end{equation}
which is well defined since $\mathcal{I}(a_e)=\mathcal{I}(b_e)$ for all $e\in\Jae$. 

For this function it holds that
%%% first equality cut
\[
\EKa[u_{\alpha}]=\lim_{n\to\infty}\left(\frac{3}{5}\right)^n\frac{1}{\rhoan^d}\left(\frac{5}{3}\right)^n\ean^d[u].
\]
Note that $\left(\frac{3}{5}\right)^n\frac{1}{\rhoan^d}$ converges if and only if the series $\sum\limits_{i=1}^\infty\log\left(\frac{5+3\dai}{5}\right)$ converges. By using the Taylor expansion of $\log$ we have that
%here all calculations just in case
%\begin{align*}
%\abs{\log\left(\frac{5+3\dai}{5}\right)}&=\abs{\log\left(1+\frac{3}{5}\alpha \left(\frac{1-\alpha}{2}\right)^{i-1}\right)}=\abs{\log\left(1+\frac{3}{5}\alpha \left(\frac{1-\alpha}{2}\right)^{i-1}-\log 1\right)}\\
%&\leq\frac{\abs{1+\frac{3}{5}\alpha \left(\frac{1-\alpha}{2}\right)^{i-1}-1}}{\theta\left(1+\frac{3}{5}\alpha \left(\frac{1-\alpha}{2}\right)^{i-1}\right)+(1-\theta)}=\frac{\frac{3}{5}\alpha\left(\frac{1-\alpha}{2}\right)^{i-1}}{\frac{3}{5}\alpha\left(\frac{1-\alpha}{2}\right)^{i-1}+1}\\
%&\leq \frac{3}{5}\alpha\left(\frac{1-\alpha}{2}\right)^{i-1}
%\end{align*}
\[
\abs{\log\left(\frac{5+3\dai}{5}\right)}\leq\frac{3}{5}\alpha\left(\frac{1-\alpha}{2}\right)^{i-1}
\]
hence $\lim\limits_{n\to\infty}\left(\frac{3}{5}\right)^n\frac{1}{\rhoan^d}=:L<\infty$ and
\[
\EKa[u_{\alpha}]=L\cdot\E_K[u]<\infty,
\]
which implies $u_\alpha\in\Da$.
\begin{lemma}\label{lem: 1.ev Kaw}
Let $m\in\N$. There exists $C_D\geq 0$ such that for all $w\in\Am$
\begin{equation}\label{eq: 1.ev Kaw}
\kappa_1(\Kak{w}^0):=\inf_{\substack{u\in\mathcal{C}_{\Kak{w}^0}\\u\neq 0}}\left\{\frac{\EKa[u]}{\norm{u}^2_{L^2(\Kak{m}^0)}}\right\}\leq 5^m C_D.
\end{equation}
\end{lemma}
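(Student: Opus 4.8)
The plan is to exhibit, for each word $w\in\A^m$, a single nonzero test function supported in the open cell $\Kak{w}^0$ whose Rayleigh quotient is at most $5^m C_D$ with $C_D$ independent of $w$ and $m$; since $\kappa_1(\Kak{w}^0)$ is an infimum of such quotients, the bound follows at once. The test function is produced by transplanting a fixed gasket function through the identification $\mathcal{I}$. Concretely, fix once and for all a nonzero $g\in\D_K$ with $\supp(g)\subseteq K\setminus V_0$ (for instance a localized eigenfunction of $(\E_K,\D_K)$ vanishing near the three corners), form $g_\alpha\colon\Vas\to\R$ as in~\eqref{eq: Def ualpha}, and recall from the computation preceding the statement that $g_\alpha\in\Da$ with $\EKa[g_\alpha]=L\,\E_K[g]$. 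For each $w\in\A^m$ I set
\[
u_w:=\begin{cases} g_\alpha\circ\Ga{w}^{-1} & \text{on }\Kak{w},\\ 0 & \text{on }\Ka\setminus\Kak{w}.\end{cases}
\]
Because $g$ vanishes near $V_0$, the function $g_\alpha$ vanishes near $V_0$ as well, so $\supp(u_w)\subseteq\Kak{w}^0$ and $u_w\in\mathcal{C}_{\Kak{w}^0}$.

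For the numerator I would exploit that $g_\alpha$ is constant on every segment $e\in\Ja$ by construction, so its continuous energy vanishes and $\EKa[g_\alpha]=\EKa^d[g_\alpha]$; the same holds for $u_w$. Iterating the discrete scaling relation from the scaling lemma $m$ times (cf.~\eqref{eq lemma scaling}) gives $\EKa^d[v]=(5/3)^m\sum_{v'\in\A^m}\EKa^d[v\circ\Ga{v'}]$ for $v\in\F$. Applying this to $v=u_w$ and noting that $u_w\circ\Ga{v'}=g_\alpha$ when $v'=w$ while $u_w\circ\Ga{v'}$ contributes no discrete energy otherwise (two distinct $m$-cells meet only at corner vertices, where $u_w=0$), I obtain
\[
\EKa[u_w]=\EKa^d[u_w]=\Big(\tfrac{5}{3}\Big)^m\EKa^d[g_\alpha]=\Big(\tfrac{5}{3}\Big)^m L\,\E_K[g].
\]

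For the denominator I would discard the nonnegative contribution of the segments and keep only the part of $\mua$ living on $\Fa$. Since $\muad$ is the self-similar measure of equal weights $\tfrac13$ on the three similitudes $\Ga{1},\Ga{2},\Ga{3}$ generating $\Fa$, the restriction of $\muad$ to $\Ga{w}(\Fa)$ equals $3^{-m}(\Ga{w})_\ast\muad$, and the change of variables $x\mapsto\Ga{w}(x)$ yields
\[
\|u_w\|^2_{L^2(\Kak{m}^0)}\ \geq\ \int_{\Ga{w}(\Fa)}\abs{g_\alpha\circ\Ga{w}^{-1}}^2\,d\muad\ =\ 3^{-m}\int_{\Fa}\abs{g_\alpha}^2\,d\muad\ =\ 3^{-m}\,\norm{g_\alpha}^2_{L^2(\Fa,\muad)},
\]
which is strictly positive once $g$ is chosen so that $g_\alpha$ does not vanish $\muad$-a.e.\ on $\Fa$. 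Combining the two estimates cancels $3^{-m}$ against $(5/3)^m$ and produces
\[
\kappa_1(\Kak{w}^0)\ \leq\ \frac{\EKa[u_w]}{\|u_w\|^2_{L^2(\Kak{m}^0)}}\ \leq\ 5^m\,\frac{L\,\E_K[g]}{\norm{g_\alpha}^2_{L^2(\Fa,\muad)}}\ =:\ 5^m C_D,
\]
with $C_D$ depending only on the fixed $g$, hence uniform in $w$ and $m$.

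The main obstacle is making sure that the complicated continuous scaling of Corollary~\ref{cor scaling prop EKa} never intervenes: this is exactly what the vanishing of the continuous energy of $g_\alpha$ secures, reducing the energy to the clean discrete factor $(5/3)^m$ and the measure to the factor $3^{-m}$, whose product is $5^m$. The remaining points are routine bookkeeping: the support condition $\supp(u_w)\subseteq\Kak{w}^0$ (guaranteed by choosing $g$ with support away from $V_0$), the measure change of variables via the self-similarity of $\muad$ on $\Fa$ underlying Lemma~\ref{lem: Ka=Fa cup Ja}, and the positivity $\norm{g_\alpha}_{L^2(\Fa,\muad)}>0$ for the chosen $g$.
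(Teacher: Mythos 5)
Your proposal is correct and follows essentially the same route as the paper: the identical transplanted test function $g_\alpha\circ\Ga{w}^{-1}$ supported in $\Kak{w}^0$, and the identical energy computation via Corollary~\ref{cor scaling prop EKa} giving $\left(\tfrac{5}{3}\right)^m L\,\E_K[g]$ because only the $w$-term survives and all continuous contributions vanish. The only (minor) difference is in the denominator: the paper insists that the gasket function be $\equiv 1$ on a subcell $S_v(K)$ and bounds the norm below by $\mua(\Kak{wv})\geq\gamma^{\abs{v}}\mua(\Kak{w})$ together with $3^m\mua(\Kak{w})>\tfrac12$, whereas you invoke the exact self-similar scaling of $\muad$ on $\Ga{w}(\Fa)$ to obtain $3^{-m}\norm{g_\alpha}^2_{L^2(\Fa,\muad)}$ directly -- both routes produce the factor $3^{-m}$ that combines with $\left(\tfrac{5}{3}\right)^m$ to give $5^m C_D$.
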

\begin{proof}
Let $v\in\As$ such that $S_v(K)\subseteq K\setminus V_0$ and consider $u\in\D_K^0$ a function such that $\supp_K (u)\subseteq K\setminus V_0$ and $u\equiv 1$ on $S_v(K)$. Such a function exists by~\cite[Exercise 1.4.1]{FOT11} because $S_v(K)$ is compact and $K\setminus V_0$ is open.

For any $w\in\Am$ we consider the function
\begin{equation*}
u^w(x):=\left\{\begin{array}{rl}
							u_{\alpha}\circ\Ga{w}^{-1}(x),&x\in\Kak{w}^0,\\
							0,&x\in\Ka\setminus\Kak{w}^0,
\end{array}\right.
\end{equation*}
where $u_{\alpha}\in\Da^0$ is defined as in~\eqref{eq: Def ualpha}. Then, $u^w\in\mathcal{C}_{\Kak{w}^0}$ and by Corollary~\ref{cor scaling prop EKa} we have that
%%%%%% equality before last cut
\begin{align}\label{align: 1.ev eq 1}
\EKa[u^w]&=\left(\frac{5}{3}\right)^m\sum_{w'\in\Am}\left(\EKa^d[u^w\circ\Ga{w'}]+\left(\frac{2}{1-\alpha}\right)^{2m}\EKa^c[u^w\circ\Ga{w'}]\right)\nonumber\\
&+\left(\frac{1-\alpha}{2}\right)^m\sum_{w\in\Am}\widetilde{\E}_{\Ka,m}^c[u^w\circ\Ga{w}]+\sum_{k=1}^{m-1}\sum_{w\in\A^{k-1}}\left(\frac{2}{1-\alpha}\right)^k\frac{1}{\rho_k^c}E_{\alpha,1}^c[u^w\circ\Ga{w}]\nonumber\\
&=\left(\frac{5}{3}\right)^m\EKa^d[u^w\circ\Ga{w}]%=\left(\frac{5}{3}\right)^m\EKa^d[u_{\alpha}]\nonumber\\%+\left(\frac{2}{1-\alpha}\right)^m\EKa^c[u^w\circ\Ga{w}].
=\left(\frac{5}{3}\right)^m L\E_K[u].
\end{align}
Since $u_{\alpha}\equiv 1$ on $\Kak{v}$ by construction, we also get
\begin{align}\label{eq: 1.ev eq 2}
\int_{\Ka}\abs{u^w(x)}^2d\mua(x) %&=\int_{\Kak{m}^0}\abs{u_{\alpha}\circ\Ga{w}^{-1}(x)}^2d\mua(x)
%&=\int_{\Ka}\abs{u_{\alpha}}^2d\mua(\Ga{w}(y))\nonumber\\
\geq\int_{\Kak{v}}d\mua(\Ga{w}(y))
%&=\mua(\Ga{w}(\Kak{v}))
=\mua(\Kak{wv})
%%%%%%% new line
\geq \gamma^{\abs{v}}\mua(\Kak{w})
\end{align}
%%%%%%% new line
for $\gamma=\beta\frac{1-\alpha}{2}$.

%%%%%%%%  this line cut
%and since $\mua$ is elliptic by Lemma~\ref{lem: ellip scale}, there exists $\gamma\in (0,\infty)$ such that $\mua(\Kak{vw})~\geq~\gamma^{\abs{v}}\mua(\Kak{w})$, thus
%%%%%%%%%%% this equation cut
%\begin{equation}\label{eq: 1.ev eq 2}
%\int_{\Ka}\abs{u^w(x)}^2d\mua(x)\geq \gamma^{\abs{v}}\mua(\Kak{w}).
%\end{equation}
From inequalities~\eqref{align: 1.ev eq 1} and~\eqref{eq: 1.ev eq 2} and the fact that $3^m\mua(\Kak{w})>\frac{1}{2}$, we finally obtain
\begin{align*}
\inf_{\substack{u\in\mathcal{C}_{\Kak{w}^0}\\u\neq 0}}\left\{\frac{\EKa[u]}{\norm{u}^2_{L^2(\Kak{m}^0)}}\right\}&\leq \frac{\EKa[u^w]}{\int_{\Kak{w}^0}\abs{u^w}^2d\mua}\leq \frac{\EKa[u^w]}{\gamma^{\abs{v}}\mua(\Kak{w})}\\
&=\frac{\left(\frac{5}{3}\right)^mL\E_K[u]}{\gamma^{\abs{v}}\mua(\Kak{w})}=\frac{5^mL\E_K[u]}{3^m\gamma^{\abs{v}}\mua(\Kak{w})}\\
&\leq C_D5^m,
\end{align*}
where $C_D:=\frac{2L\E_K[u]}{\gamma^{\abs{v}}}$ is independent of $w$, and inequality~\eqref{eq: 1.ev Kaw} follows.
\end{proof}
Now we can prove the lower bound of the eigenvalue counting function.

\begin{prop}\label{prop: lower bound}
There exist $C_{\alpha,1},C_{\alpha,\beta,1}>0$ depending on $\alpha$ and $\beta$, and $x_0>0$ such that
\[C_{\alpha,1}x^{\frac{\ln 3}{\ln 5}}+ C_{\alpha,\beta,1}x^{1/2}\leq N_D(x)\]
for all $x\geq x_0$.
\end{prop}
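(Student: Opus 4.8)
The plan is to run the argument of Proposition~\ref{prop: upper bound} in reverse, feeding the three preceding lemmas into the chain
\[
\sum_{w\in\Am}N_{\Kak{w}^0}(x)+N_{\Jak{m}}(x)\leq N_D(x)
\]
supplied by Lemma~\ref{lem: lower +decomp}, which is valid for every $m\geq 1$. The freedom to pick $m=m(x)$ as a function of $x$ is what lets the two summands generate the two desired powers of $x$: the discrete pieces $\Kak{w}^0$ will account for the term $x^{\frac{\ln 3}{\ln 5}}$, and the network $\Jak{m}$ for the term $x^{1/2}$.

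For the discrete contribution I would use Lemma~\ref{lem: 1.ev Kaw}: for every $w\in\Am$ the smallest Dirichlet eigenvalue of the part form on $\Kak{w}^0$ satisfies $\kappa_1(\Kak{w}^0)\leq 5^m C_D$. Hence as soon as $x\geq 5^m C_D$ each of the $3^m$ pieces contributes at least one eigenvalue below $x$, so $\sum_{w\in\Am}N_{\Kak{w}^0}(x)\geq 3^m$. I would then take $m=m(x)$ to be the largest integer with $5^m C_D\leq x$, which forces $5^m> x/(5C_D)$ and therefore
\[
3^m=5^{m\log_5 3}>(5C_D)^{-\log_5 3}\,x^{\frac{\ln 3}{\ln 5}}=:C_{\alpha,1}\,x^{\frac{\ln 3}{\ln 5}}.
\]
The condition $m\geq 1$ required by Lemma~\ref{lem: lower +decomp} holds once $x\geq 5C_D$.

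For the continuous contribution I would invoke Lemma~\ref{lem: Jam lower}, giving $N_{\Jak{m}}(x)\geq C_{\alpha,\beta,1}x^{1/2}$. The one point needing care, and the main obstacle, is that $m=m(x)$ now grows with $x$, whereas the estimate must hold with a constant independent of $m$. This is resolved by the exact level decomposition already used in Lemma~\ref{lem: Jam upper}, namely $N_{\Jak{m}}(x)=\sum_{k=1}^m\sum_{e\in\Jake{k}\setminus\Jake{k-1}}N_e(\rho_{\alpha,k}^c\beta^k x)$: since every term is non-negative, I may discard all levels $k\geq 2$ and retain only the three level-one segments, which lie in $\Jak{m}$ for every $m\geq 1$. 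The resulting bound $N_{\Jak{m}}(x)\geq\sum_{e\in\Jake{1}}N_e(\rho_{\alpha,1}^c\beta x)\sim\frac{3\alpha}{\pi}(\rho_{\alpha,1}^c\beta)^{1/2}x^{1/2}$ is then uniform in $m$, which is precisely the content of Lemma~\ref{lem: Jam lower}.

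Finally, combining the two estimates for all $x\geq x_0$, where $x_0:=\max\{5C_D,x_1\}$ and $x_1$ is the threshold from Lemma~\ref{lem: Jam lower}, I obtain
\[
N_D(x)\geq\sum_{w\in\Am}N_{\Kak{w}^0}(x)+N_{\Jak{m}}(x)\geq C_{\alpha,1}\,x^{\frac{\ln 3}{\ln 5}}+C_{\alpha,\beta,1}\,x^{1/2},
\]
which is the assertion. Apart from the uniformity issue above, every step is routine bookkeeping mirroring the upper bound, the essential inputs being the eigenvalue estimate of Lemma~\ref{lem: 1.ev Kaw} for the discrete part and the one-dimensional Weyl asymptotics for the level-one segments.
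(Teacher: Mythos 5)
Your proposal is correct and follows essentially the same route as the paper: the same decomposition $N_D(x)\geq\sum_{w\in\Am}N_{\Kak{w}^0}(x)+N_{\Jak{m}}(x)$ from Lemma~\ref{lem: lower +decomp}, the same choice of $m(x)$ as the largest integer with $C_D5^m\leq x$ so that Lemma~\ref{lem: 1.ev Kaw} yields $3^m\geq C_{\alpha,1}x^{\frac{\ln 3}{\ln 5}}$ (your constant $(5C_D)^{-\log_5 3}$ equals the paper's $\frac{1}{3}C_D^{-\ln 3/\ln 5}$), and Lemma~\ref{lem: Jam lower} for the $x^{1/2}$ term. Your extra remark that the $\Jak{m}$ bound must be uniform in $m$, secured by keeping only the level-one segments, is exactly what the paper's Lemma~\ref{lem: Jam lower} provides.
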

\begin{proof}
For $x\geq C_D$, choose $m\in\N_0$ such that $C_D5^{m}\leq x<C_D5^{m+1}$. We know from Lemma~\ref{lem: 1.ev Kaw} that 
\[\kappa_1(\Kak{w}^0)\leq C_D5^m\qquad\forall\,w\in\Am,\]
which implies that $N_{\Kak{w}^0}(x)\geq 1$ for all $w\in\Am$.

By Lemmas~\ref{lem: lower +decomp} and~\ref{lem: Jam upper} we get that
%%%%%%%%%% inequality before last cut
\begin{equation*}
N_D(x)\geq \sum_{w\in\Am}N_{\Kak{w}^0}(x)+N_{\Jak{m}}(x)%\geq \#\Am+C_{\alpha,\beta,1}x^{1/2}\\
\geq C_{\alpha,1}x^{\frac{\ln 3}{\ln 5}}+C_{\alpha,\beta,1}x^{1/2},
\end{equation*}
where $C_{\alpha,1}:=\frac{1}{3}C_D^{\frac{-\ln 3}{\ln 5}}$.
\end{proof}

Finally, we are ready to prove Theorem~\ref{prop estimation evcf} .

\begin{proof}[Proof of Proposition~\ref{prop estimation evcf}]
On one hand, the non-negative self-adjoint operator on $L^2(\Ka,\mua)$ associated to the Dirichlet form $(\EKao,\Dao)$ has compact resolvent by Theorem~\ref{thm: Spec of Lap}.

Further, since $\D_{\Ka^0}\subseteq\Da$ and $\E_{\Kao}$ coincides with $\EKa$ in $\Dao$, it follows from the minimax principle that $N_D(x)\leq N_N(x)$ for any $x\geq 0$.

Finally, let $x_0>\max\{C_P,C_D\}$. Then Propositions~\ref{prop: upper bound} and~\ref{prop: lower bound} provide the first and third inequality in~\eqref{eq prop esti evcf} for all $x\geq x_0$.
\end{proof}
%Theorem~\ref{thm d_S Ka} follows directly by the definition of spectral dimension given in~\eqref{eq: def d_S  Ka}.

\section{Conclusions and open problems}
An interesting question for further research is the exact expression of the constants involved in Theorem~\ref{prop estimation evcf}. In particular, if the constants $C_{1,\alpha}$ and $C_{2,\alpha}$ of the first term coincide asymptotically, then our result gives directly the second term of the asymptotics of the eigenvalue counting function. We strongly believe that -- with the help of renewal theory -- it is possible to formulate conditions on the parameter $\alpha$ so that $C_{1,\alpha}=C_{2,\alpha}$ holds asymptotically.

\medskip

Another interesting point concerns the diffusion process $(X_t)_{t\geq 0}$ associated to the local regular Dirichlet form $(\EKa,\Da)$. The space-time relation of this process is given by the so--called \textit{walk dimension}. %, defined as
%\[d_w \Ka= \lim_{r\downarrow 0}\frac{\log\mathbb{E}^x[T_{B_d(x,r)}]}{\log r},\]
%where $T_{B_d(x,r)}$ denotes the (random) time the process needs to exit a ball of radius $r$ centred at $x\in\Ka$. If the set is sufficiently %homogeneous, this limit is independent of $x$.
If one has Li-Yau type sub-Gaussian estimates for the heat kernel,
\[p(t,x,y)\asymp\frac{C_1}{\mu(B_d(x,t^{1/\delta}))}\exp\left(-C_2\left(\frac{d(x,y)^{\delta}}{t}\right)^{1/(\delta-1)}\right),\]
then the walk dimension coincides with the parameter $\delta$ of the estimate (see~\cite[Example 3.2]{HR03} for the case of the Sierpi\'nski gasket).

\medskip

Spectral dimension and walk dimension are in general related by the so--called \textit{Einstein relation}
\begin{equation}\label{Einstein}
d_Sd_w=2 d_H,
\end{equation}
where $d_H$ denotes the Hausdorff dimension of the set. This relation shows the connection between three fundamental points of view on a set, namely analysis, probability theory and geometry.

\medskip

The Einstein relation has not yet been proven to hold in general but it is known to be truth in the case of the Sierpi\'nski gasket (see e.g.~\cite{Fre12}). The case of Hanoi attractors seems to be quite interesting because of the fact that
\begin{equation*}
d_H(\Ka)<d_S(\Ka)\qquad \forall\,\alpha\in \left(1-\frac{2}{\sqrt{5}},\frac{1}{3}\right).
\end{equation*}
Should $d_w(\Ka)$ exist and the relation in~\eqref{Einstein} hold, one would have $d_w(\Ka)<2$ for $\alpha\in (1-\frac{2}{\sqrt{5}},\frac{1}{3})$. This would mean that the diffusion process associated with the Dirichlet form $(\EKa,\Da)$ for these $\alpha$'s moves faster than two-dimensional Brownian motion. 
Of course, this superdiffusive behavior is brought by the properties of the measure $\mu_{\alpha,\beta}$ giving high conductance to the very small wires in the set. However, the process is still a diffusion and has no jumps. This apparent contradiction with the by now established models for fractal networks arises many interesting questions that should be investigated. Answering these questions may have applications in the design of ``superconductors''.

\medskip

We would also like to note that the resulting process can perhaps be understood as \textit{asymptotically lower dimensional} (ALD). Such processes were first treated in the context of abc-gaskets in~\cite{Hat94}, and studied later on Hambly and Kumagai in~\cite{HK98} for some particular nested fractals.

\section*{Acknowledges}
We thank specially Professors Alexander Teplyaev and Naotaka Kajino for valuable comments and fruitful discussions, as well as for their indispensable advice. We are also thankful for the anonymous feedback pointing out the references~\cite{HK98,HN03,Hat94,MW88,Mur95} and raising the questions of defining ALD processes on Hanoi attractors and of extending this processes to more general objects out of these examples.

\bibliographystyle{amsplain}
\bibliography{AF14_WAHA}
\end{document}